\newcommand*\patchAmsMathEnvironmentForLineno[1]{%
  \expandafter\let\csname old#1\expandafter\endcsname\csname #1\endcsname
  \expandafter\let\csname oldend#1\expandafter\endcsname\csname end#1\endcsname
  \renewenvironment{#1}%
     {\linenomath\csname old#1\endcsname}%
     {\csname oldend#1\endcsname\endlinenomath}}% 
\newcommand*\patchBothAmsMathEnvironmentsForLineno[1]{%
  \patchAmsMathEnvironmentForLineno{#1}%
  \patchAmsMathEnvironmentForLineno{#1*}}%
\tikzstyle directed=[postaction={decorate,decoration={markings, mark=at position .55 with {\arrow{stealth}}}}]	
\newcommand*\C{\mathds C}
\newcommand*\Z{\mathds Z}
\newcommand*\N{\mathds N}
\newcommand*\F{\mathds F}
\newcommand*\p{\mathfrak{p}}
\renewcommand*\P{\mathfrak{P}}
\newcommand*\Q{\mathds Q}
\renewcommand*\mod{\text{ mod }}
\newcommand*\cO{\mathcal{O}}
\newcommand{\Le}{\mathcal L}
\DeclareMathOperator{\tr}{Tr}
\DeclareMathOperator{\SL}{SL}
\DeclareMathOperator{\id}{id}
\DeclareMathOperator{\Stab}{Stab}
\DeclareMathOperator{\Gal}{Gal}
\DeclareMathOperator{\Mat}{Mat}
\DeclareMathOperator{\GL}{GL}
\DeclareMathOperator{\Aut}{Aut}
\newtheorem{theorem}{Theorem}[section]
\newtheorem{lemma}[theorem]{Lemma}
\newtheorem{example}[theorem]{Example}
\newtheorem*{conjecture}{Conjecture}
\theoremstyle{definition}
\newtheorem{definition}[theorem]{Definition}
\newtheorem*{remark}{Remark}
\title{Small Heights in Large non-Abelian Extensions}
\author[L. Frey]{Linda Frey}
\address{Department of Mathematics,
Universitetsparken 5,
2100 Copenhagen
Denmark
}
\email{Linda.Frey@math.ku.dk}
\begin{document}

\pagestyle{fancy}
\fancyhf{}
\rhead{\thepage \qquad Linda Frey}
\lhead{Small Heights in Large Non-Abelian Extensions }

\selectlanguage{english}
\begin{abstract}
Let $E$ be an elliptic curve over the rationals. Let $L$ be an infinite Galois extension of the rationals with uniformly bounded local degrees at almost all primes. We will consider the infinite extension $L(E_{\text{tor}})$ of the rationals where we adjoin all coordinates of torsion points of $E$. In this paper we will prove an effective (and in the non-CM case even explicit) lower bound for the height of non-zero elements in $L(E_{\text{tor}})$ that are not a root of unity.\end{abstract}
\maketitle

\section{Introduction}
\label{Introduction}
Let $E$ be an elliptic curve defined over $\Q$, let $\mu_\infty$ be the set of all roots of unity and let $\Q(E_\text{tor})$ be the smallest field extension of $\Q$ that contains all coordinates of torsion points of $E$. In 2013 Habegger \cite{MR3090783} showed that in $\Q(E_\text{tor})^* \setminus \mu_\infty$ the height is bounded from below by a positive constant. In an earlier paper \cite{2017arXiv171204214F}, the author proves an explicit lower height bound in that case. We will generalize these results and allow larger base fields as follows.

\begin{theorem}
\label{generalization}
Let $E$ be an elliptic curve over $\Q$. Let $L$ be a (possibly infinite) Galois extension of $\Q$ with uniformly bounded local degrees by $d\in\N$. Then the height in $L(E_{\text{tor}})^* \setminus \mu_\infty$ is bounded from below by a positive constant.
\end{theorem}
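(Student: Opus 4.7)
The plan is to adapt the Bogomolov-type argument used by Habegger in \cite{MR3090783} for the case $L=\Q$. His key tool is the existence, at a rational prime $p$ of good ordinary reduction for $E$, of two commuting Frobenius-like elements of $\Gal(\Q(E_{\text{tor}})/\Q)$ whose eigenvalues on the $p$-adic Tate module $T_pE$ have very different $p$-adic sizes. The role of the new hypothesis that $L/\Q$ has uniformly bounded local degrees is precisely to ensure that passing from $\Q$ to $L$ preserves this mechanism: it forces the residue field of $L$ at any place above such a $p$ to have size at most $p^d$, so that the Frobenius on the residue extension is a bounded power of the absolute $p$-Frobenius.

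I would argue by contradiction: suppose $\alpha_n\in L(E_{\text{tor}})^*\setminus\mu_\infty$ satisfy $h(\alpha_n)\to 0$. Each $\alpha_n$ lies in a finite subfield $F_n(E[N_n])$ with $F_n\subset L$ finite Galois over $\Q$ and $N_n\in\N$. I would then fix a prime $p$ of good ordinary reduction for $E$, coprime to $N_n$ and outside the finite exceptional set attached to $L$. Serre's open image theorem (in the non-CM case), combined with the uniform local-degree bound on $L$, shows that the image of $\Gal(F_n(E_{\text{tor}})/F_n)$ in $\GL_2(\Z_p)$ acting on $T_pE$ is open of index bounded independently of $n$. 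Inside this image I would produce two commuting elements $\sigma_1,\sigma_2$ that fix a chosen prime $\P$ of $F_n(E_{\text{tor}})$ above $p$ and whose eigenvalues on $T_pE$ realise respectively the Frobenius action on the formal group of $E$ and on the unramified quotient of $E[p^\infty]$.

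Applying $\sigma_1,\sigma_2$ to $\alpha_n$ yields congruences $\sigma_i(\alpha_n)\equiv \alpha_n^{\lambda_i}\pmod{\P}$ for explicit integers $\lambda_i$ whose $p$-adic valuations are sharply separated. Taking an appropriate combination produces a relation of the form $\sigma_1^{a}\sigma_2^{b}(\alpha_n)\equiv \alpha_n^{M}\pmod{\P}$ with $M$ large compared to the ramification and residue data. Applying the product formula to $\sigma_1^{a}\sigma_2^{b}(\alpha_n)/\alpha_n^{M}$, and using the smallness of $h(\alpha_n)$ to bound the contribution from the remaining places uniformly in $n$, then forces $\alpha_n\in\mu_\infty$, which is the desired contradiction.

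The main obstacle will be the CM case, in which the image of Galois is abelian and one no longer has two independent Frobenius directions sitting above a single prime; the pair $(\sigma_1,\sigma_2)$ must then be replaced by a single Frobenius together with complex conjugation, or with the nontrivial automorphism of the imaginary quadratic CM order, and one has to verify that this still delivers the required $p$-adic contrast modulo $\P$. A secondary, more technical point common to both cases is to control the intersection $L\cap\Q(E_{\text{tor}})$, which a priori may contain a large abelian subextension of $\Q(E_{\text{tor}})$: one must show that the uniform local-degree bound on $L$ nevertheless leaves inside $\Gal(L(E_{\text{tor}})/L)$ sufficiently many Frobenius-type elements of the prescribed form, with all constants depending only on $d$ and on $E$.
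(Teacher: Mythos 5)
There is a genuine gap, and it sits at the very heart of the problem. You propose to ``fix a prime $p$ of good ordinary reduction for $E$, coprime to $N_n$''. No fixed prime can be coprime to all the levels $N_n$: the elements $\alpha_n$ range over all of $L(E_{\text{tor}})$, so they may lie in $L(E[p^{k}])$ for your chosen $p$; and if you instead let $p$ grow with $N_n$, every constant in your congruence-plus-product-formula estimate depends on $p$, hence on $\alpha_n$, and the contradiction with $h(\alpha_n)\to 0$ evaporates. The whole difficulty of the theorem is precisely the case $p\mid N$, and above all $p^2\mid N$: one must understand the wild ramification of $F(E[N])/F$ above the fixed $p$ (in the paper this is done via higher ramification groups, giving $\psi(\alpha)^q\equiv\alpha^q$ modulo a high power of the maximal ideal for $\psi\in\Gal(F(N)/F(N/p))$), and one must then remove the resulting hypothesis $\alpha^q\notin F(N/p)$ by a descent argument whose engine is group theory: the mod-$p$ image over $K$ contains $\SL_2(\F_p)$ (this is where the finite exponent of $\Gal(L/\Q)$, i.e.\ Checcoli's theorem, enters — bounded local degrees alone do not give you a Galois image over $F_n$ of index bounded independently of $n$, since $[F_n:\Q]$ is unbounded), and conjugates of the order-$p^2$ group $\Gal(F(N)/F(N/p))$ then generate all of $\Gal(K(N)/K(N/p))$. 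None of this machinery, nor any substitute for it, appears in your sketch.

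A secondary but substantive point: your description of Habegger's mechanism is not accurate. He (and this paper) works at a \emph{supersingular} surjective prime, with a single lift of the squared Frobenius over the quadratic unramified extension $\Q_q$, suitably powered so as to act trivially on the bounded-degree local field $F$ and to be central in $\Gal(K(N)/\Q)$; there is no pair of commuting Frobenius elements with eigenvalues of different $p$-adic valuations, and at an ordinary prime the local image lands in a Borel, which destroys the ramification picture (totally ramified $p$-power tower, scalar elements in the image) that the proof exploits. Your CM remark is also off the paper's route: the paper disposes of the CM case by showing $\Gal(L(E_{\text{tor}})/L)$ is central in $\Gal(L(E_{\text{tor}})/L_0)$ and quoting the Amoroso--David--Zannier criterion, rather than by any Frobenius/complex-conjugation argument. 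The outer shell of your plan (congruence at places above $p$, product formula, archimedean estimate via a bound on $\sum_\tau\log|\tau(\beta)-1|$) matches the paper, but without a fixed admissible prime, a treatment of the wildly ramified case, and the descent, the proof does not go through.
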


We can even give an explicit formula for such a lower height bound.

\begin{theorem}
\label{generalization}
Let $E$ be an elliptic curve over $\Q$. Let $L$ be a (possibly infinite) Galois extension of $\Q$ with uniformly bounded local degrees by $d\in\N$. Let $p$ be a prime such that $p$ is surjective, supersingular and greater than $\max(2d+2,\exp(\Gal(L/\Q)))$. Then for any $\alpha \in L(E_{\text{tor}})^* \setminus \mu_\infty$ we have $h(\alpha) \geq \frac{(\log p)^4}{p^{5p^4}}$.
\end{theorem}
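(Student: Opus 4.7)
The plan is to adapt the approach of \cite{2017arXiv171204214F}, which makes Habegger's argument from \cite{MR3090783} explicit in the case $L = \Q$, to the larger base $L$ considered here. The three hypotheses on $p$ (surjectivity of the mod-$p$ representation, supersingular reduction, and $p > \max(2d+2, \exp(\Gal(L/\Q)))$) are chosen exactly so that the key local argument at $p$ still goes through over $L(E_{\text{tor}})$ with controlled constants.

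First I would fix a prime $\mathfrak{P}$ of $F := L(E_{\text{tor}})$ above $p$ and analyze the completion $F_\mathfrak{P}$. Since $p$ exceeds the exponent of $\Gal(L/\Q)$, every decomposition subgroup of $\Gal(L/\Q)$ has order prime to $p$; combined with the uniform bound $d$ on local degrees, this confines the contribution of $L$ to a prime-to-$p$ extension of degree at most $d$ of the corresponding completion of $\Q(E_{\text{tor}})$. The completion $\Q_p(E_{\text{tor}})$ itself has the structure exploited in \cite{2017arXiv171204214F}: under supersingularity and surjectivity, $\Q_p(E[p])$ is a totally ramified extension of degree $p^2 - 1$ over the unramified quadratic extension of $\Q_p$, and the higher $p^n$-torsion sits in a controlled tower on which Frobenius acts, via the formal group, as multiplication by $p$ up to a unit.

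Next I would run the Frobenius/equidistribution step. Given $\alpha \in F^* \setminus \mu_\infty$, one picks $\sigma \in \Gal(F/\Q)$ whose action on the residue field at $\mathfrak{P}$ agrees with a power of the Frobenius. Supersingularity then yields a congruence of the shape $\sigma(\alpha) \equiv \alpha^{p^a} \pmod{\mathfrak{P}^N}$ for arbitrarily large $N$, where $a$ is the (bounded) residue-field degree at $\mathfrak{P}$. If $h(\alpha)$ were smaller than the claimed bound, these congruences would sharpen to exact identities $\sigma(\alpha) = \zeta_\sigma \alpha^{p^a}$ for roots of unity $\zeta_\sigma$; using enough independent $\sigma$, together with the slack provided by the hypothesis $p > 2d + 2$, one forces $\alpha \in \mu_\infty$, contradicting the assumption.

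The main obstacle will be quantitative bookkeeping: converting the bound on local degrees into explicit control of the ramification index and residue degree of $F_\mathfrak{P}/\Q_p$, and then substituting these into the Bogomolov/Habegger-type inequality of \cite{2017arXiv171204214F} to recover the exact exponent $p^{5p^4}$ and the $(\log p)^4$ numerator. The $(\log p)^4$ factor comes from an auxiliary polynomial of controlled $p$-adic size, while $p^{5p^4}$ absorbs the worst-case dependence on the local degree $[F_\mathfrak{P}:\Q_p]$, which under our hypotheses is at most $d$ times a universal polynomial in $p$ arising from the $p$-adic tower of torsion fields. Once these local invariants are pinned down, the explicit counting estimate of the earlier paper should go through verbatim with $d$ replacing the trivial local-degree bound used there.
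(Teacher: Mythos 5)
Your overall orientation (local analysis at a supersingular, surjective prime plus the explicit height machinery of \cite{2017arXiv171204214F}) is the right family of ideas, but the core of your argument as written does not work, and it misses the part of the proof where the hypotheses on $p$ are actually spent. First, the congruence you invoke is not available: supersingularity gives $\sigma(\alpha)\equiv\alpha^{q^{\mathcal{E}}}$ only modulo a \emph{bounded} power of the maximal ideal (in the paper, $|\tilde{\varphi}(\alpha)-\alpha^{q^{\mathcal{E}}}|_p\leq p^{-1/\mathcal{E}}$ in the tame case, and $|\psi(\alpha)^q-\alpha^q|_p\leq p^{-1}$ in the wild case via higher ramification groups), not modulo $\mathfrak{P}^N$ for arbitrarily large $N$; if the latter held it would force $\sigma(\alpha)=\alpha^{q^{\mathcal{E}}}$ exactly and hence $h(\alpha)=0$ outright, which is not the structure of the argument. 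The real proof feeds the bounded congruence into the product formula, handles the archimedean places with the explicit estimate on $\sum_\tau\log|\tau(\beta)-1|$ (Lemma \ref{sumexpl}), and extracts the lower bound directly; there is no step where congruences ``sharpen to exact identities $\sigma(\alpha)=\zeta_\sigma\alpha^{p^a}$''. Relatedly, you treat the completion of $L(E_{\text{tor}})$ at $\mathfrak{P}$ as if its local invariants were bounded, but that local degree is infinite (the $p$-power torsion ramifies unboundedly); the paper instead fixes a finite $K\subset L$ with $\alpha\in K(N)$ and a finite local field $F\supset\Q_q$ of degree $\leq 2d$, and must moreover take suitable powers ($\mathcal{E}$ a multiple of $[F:\Q_q](q-1)$ and of $\exp(\Gal(L/\Q))$) to manufacture an element $\tilde{\varphi}$ that acts trivially on $K$ and lies in the \emph{center} of $\Gal(K(N)/\Q)$ — this centrality is what lets the local estimate be transported to every place above $p$, and your ``pick $\sigma$ agreeing with a power of Frobenius on the residue field'' does not address it.

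Second, your sketch has no counterpart to the wildly ramified case and the descent, which is where $p>\exp(\Gal(L/\Q))$ is genuinely used. When $p^2\mid N$ the Frobenius-type argument is replaced by a higher-ramification estimate, and the resulting height bound (Lemma \ref{Lemma5.3}) is conditional on $\alpha^q\notin F(N/p)$. Removing that condition requires: (a) showing $\rho(\Gal(K(p)/K))\supset\SL_2(\F_p)$, which is exactly where the exponent hypothesis enters (raise a unipotent to the $\exp(\Gal(K/\Q))$-th power, coprime to $p$, and use normality plus the classification of normal subgroups of $\SL_2(\F_p)$); (b) constructing a central $\sigma_F$ acting on $E[p^n]$ as multiplication by $2^{[F:\Q_q]}$ and on $\mu_{p^n}$ as the $4^{[F:\Q_q]}$-th power; and (c) the Amoroso--Zannier descent applied to $\gamma=\sigma_F(\alpha)/\alpha^{4^{[F:\Q_q]}}$, which either lands in the wild case or descends to the tame case. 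Without this block there is no way to cover a general $\alpha\in L(E_{\text{tor}})$, and the constants cannot be assembled: the $(\log p)^4$ comes from the fourth power forced by the exponent $\tfrac14$ in Lemma \ref{sumexpl} (not from an auxiliary polynomial), and $p^{5p^4}$ comes from $q^{\mathcal{E}}$ with $\mathcal{E}\leq p^4/2$ in the tame bound combined with the wild-case bound $(\log p)^4/(4\cdot10^6p^{32})$, not from a polynomial-in-$p$ bound on the local degree of $L(E_{\text{tor}})$.
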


For the definitions of surjective and supersingular see section \ref{NotNFC}. Remark that, by  \cite{MR3009657}, we have that $\max(2d+2,\exp(\Gal(L/\Q)))$ is always finite. Furthermore, in an earlier paper \cite{2017arXiv171204214F}, the author proves an explicit upper bound for such a prime that only depends on the $j$-invariant (or the conductor, respectively) of $E$ and $\max(2d+2,\exp(\Gal(L/\Q)))$.\\

The proof of our Theorem \ref{generalization} involves the theory of local fields, ramification theory and Galois theory. In his proof, Habegger makes heavy use of the Frobenius. In our generalized case, we can not always be sure that there exists a lift of the Frobenius. We will work around that by taking suitable powers of suitable morphisms. Another key ingredient in Habegger's proof are non-split Cartan subgroups. In our proof we can completely work around that by considering the unramified and the tamely ramified case together.\\

\subsection*{Acknowledgements}
I am very thankful for all the people who helped me write this article, in particular the following. I thank Philipp Habegger who proposed this problem to me. He gave me many helpful comments and productive input. I thank Francesco Amoroso for giving me helpful comments and helping me solve the CM case. I thank Gabriel Dill for his amazing accurateness and great patience while reading my manuscripts over and over again. I thank Waltraut Lederle for helping me with my group theory issues. This research was done during my PhD\footnote{This paper is the second part of the PhD thesis of the author. The first part of the thesis is the paper \cite{2017arXiv171204214F}.} at Universit\"at Basel in the DFG project 223746744 "Heights and unlikely intersections" and written up during my SNF grant Early.PostDoc Mobility at the University of Copenhagen.\\

\section{Infinite base fields}

\begin{definition}
Let $L$ be a Galois extension of $\Q$, $S$ a set of prime numbers and $d \in\N$. We say that $L$ has \emph{uniformly bounded local degrees} above $S$ by $d$ if and only if for all primes $p \in S$ and $v$ extending $p$, we have $[L_v:\Q_p] \leq d$. Here, $L_v$ is the $v$-adic completion of $L$.
\end{definition}

Our goal is proving the following theorem.

\begin{theorem}
Let $E$ be an elliptic curve over $\Q$ and let $L/\Q$ be a Galois extension with uniformly bounded local degrees above all but finitely many primes. Then $L(E_{\text{tor}})$ has the Bogomolov property.
\end{theorem}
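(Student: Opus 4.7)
The plan is to adapt Habegger's local-analytic approach at a well-chosen supersingular prime $p$, using the two hypotheses on $L$ to recover the main features of the base case $L = \Q$. Concretely, after discarding the finitely many primes where the local degrees of $L$ are not bounded by some $d$, I select a prime $p$ that is simultaneously supersingular for $E$, has surjective mod-$p$ Galois representation, and satisfies $p > \max(2d+2, \exp(\Gal(L/\Q)))$; such a $p$ exists by Elkies' theorem on supersingular primes together with Serre's open image theorem and the finiteness result cited from \cite{MR3009657}. I then fix a place $v$ of $L(E_{\text{tor}})$ above $p$ and pass to the completion $F := L(E_{\text{tor}})_v$, which is generated over $\Q_p$ by $L_w$ (of degree at most $d$) and the local torsion field $\Q_p(E_{\text{tor}})$.

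The heart of the argument is the construction of a Frobenius substitute $\sigma \in \Gal(F/\Q_p)$ that acts trivially on $L_w$ but realizes a controlled power of Frobenius on $\Q_p(E_{\text{tor}})$. Supersingularity makes the ramification filtration of $\Q_p(E_{\text{tor}})/\Q_p$ explicit via the Lubin--Tate formal group of height $2$ attached to $E$; the bound $p > 2d+2$ forces $L_w/\Q_p$ to be tame with tame residue extension, so it is linearly disjoint from the wild part of $\Q_p(E_{\text{tor}})$; and the bound $p > \exp(\Gal(L/\Q))$ ensures that for any $\tau \in \Gal(F/\Q_p)$ restricting to a Frobenius of $\Q_p(E_{\text{tor}})$, some small power $\tau^n$ is trivial on $L_w$ while still acting as a known, bounded power of Frobenius on $\Q_p(E_{\text{tor}})$. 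This simultaneously bypasses Habegger's non-split Cartan analysis by treating the unramified and tamely ramified strata together, as promised in the introduction.

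With $\sigma$ in hand, the height inequality proceeds along classical lines. For $\alpha \in L(E_{\text{tor}})^* \setminus \mu_\infty$, I form $\beta := \sigma(\alpha) - \alpha^{p^{2k}}$, where $p^{2k}$ is the Frobenius power realized by $\sigma$. The action of Frobenius on the formal group of $E$ yields a quantitative congruence $\sigma(\alpha) \equiv \alpha^{p^{2k}}$ in the appropriate ring, so $|\beta|_v$ is very small in a way controlled only by $p$; on the other hand, $h(\beta) \leq C(p,d) \, h(\alpha)$ by standard height inequalities. A Liouville-type lower bound on $|\beta|_v$ via the product formula, applied to a hypothetical $\alpha$ of very small height, then forces a contradiction, and a careful accounting of the constants reproduces the explicit bound $(\log p)^4 / p^{5p^4}$ from Theorem~\ref{generalization}.

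The main obstacle is the construction of $\sigma$ in the second paragraph. Over $\Q$ one has a genuine Frobenius on $\Q_p^{\text{unr}}$ which lifts through the local absolute Galois group; over $L$ the Galois action on $L_w$ can intertwine with the Frobenius action on $\Q_p(E_{\text{tor}})$, and decoupling the two actions requires taking higher powers that degrade the final bound exponentially. Balancing the three constraints that $\sigma$ be trivial on $L_w$, be a sufficiently small power of Frobenius on $\Q_p(E_{\text{tor}})$, and still move $\alpha$ enough to apply the Liouville bound, is the technical crux of the argument and is precisely where the assumption $p > \exp(\Gal(L/\Q))$ enters essentially. The finitely many primes at which the local-degree hypothesis might fail never interfere, since they are simply avoided in the choice of $p$.
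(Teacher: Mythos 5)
There are two genuine gaps. First, your choice of prime requires the mod-$p$ representation to be surjective, which you justify by Serre's open image theorem; but the theorem as stated allows $E$ to have complex multiplication, and for a CM curve no sufficiently large prime has surjective mod-$p$ image (the image lies in the normalizer of a Cartan subgroup). The paper therefore splits off the CM case and handles it by a separate, much softer argument: one shows that $\Gal(L(E_{\text{tor}})/L)$ lies in the center of $\Gal(L(E_{\text{tor}})/L_0)$, where $L_0$ is the CM field, and then quotes Theorem 1.5 of \cite{MR3182009}. Your proposal never addresses this case, so as written it only proves the theorem for non-CM curves.

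Second, and more seriously, the single ``Frobenius substitute'' $\sigma$ with a congruence $\sigma(\alpha)\equiv\alpha^{p^{2k}}$ valid for all $\alpha\in L(E_{\text{tor}})$ cannot exist. Above the first $p$-torsion layer the field $\Q_q(E[p^n])$ is totally and wildly ramified over $\Q_q(E[p])$ (this is exactly what supersingularity buys, via the formal group), and a Frobenius-type congruence modulo the maximal ideal only controls elements of unramified, or at worst boundedly tamely ramified, subextensions. This is why both Habegger and the paper split into the tamely ramified case $p^2\nmid N$, where your outline is essentially the paper's argument (a power of Frobenius trivial on the bounded local field, the product formula, and the archimedean estimate of Lemma \ref{sumexpl}), and the wildly ramified case $p^2\mid N$, which runs on a completely different mechanism: an element $\psi$ of $\Gal(F(N)/F(N/p))$ lying in a high ramification group, the congruence $|\psi(\alpha)^q-\alpha^q|_p\leq p^{-1}$, a centralizer/orbit count to show enough places above $p$ contribute, and then a descent (the central element $\sigma_F$ of Lemma \ref{Lemma6.4}, the group-theoretic fact that $\SL_2(\F_p)$-conjugates of the order-$p^2$ group generate all of $\Mat_2(\F_p)$, and the Amoroso--Zannier descent with the quotient $\gamma=\sigma_F(\alpha)/\alpha^{4^{[F:\Q_q]}}$) to remove the hypothesis $\alpha^q\notin F(N/p)$, together with the root-of-unity analysis resting on $F(N)\cap\mu_{p^\infty}=\mu_{p^n}$. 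None of these ingredients appears in your outline, so the part of the proof that actually handles elements generated by high $p$-power torsion --- the technical core of the paper --- is missing rather than merely compressed.
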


We will use the following result of Checcoli to make use of the uniform boundedness.

\begin{theorem}[\cite{MR3009657}]
\label{Sara}
Let $L/\Q$ be a Galois extension. Then the following conditions are equivalent:
\begin{itemize}
\item[(1)] $L$ has uniformly bounded local degrees above every prime.
\item[(2)] $L$ has uniformly bounded local degrees above all but finitely many primes.
\item[(3)] $\Gal(L/\Q)$ has finite exponent.
\end{itemize}
\end{theorem}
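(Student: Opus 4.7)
My plan is to prove the theorem by the cyclic implications $(1) \Rightarrow (2) \Rightarrow (3) \Rightarrow (1)$. The implication $(1) \Rightarrow (2)$ is immediate from the definition.

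For $(2) \Rightarrow (3)$, let $d$ be the uniform bound on $[L_v : \Q_p]$ over the cofinite set $S$ of primes. I would show that every $\sigma \in G := \Gal(L/\Q)$ satisfies $\sigma^{d!} = 1$, so that $G$ has finite exponent dividing $d!$. Fix $\sigma$ and any finite Galois subextension $M/\Q$ of $L$, and set $n_M := \mathrm{ord}(\sigma|_M)$. Applying the classical Chebotarev density theorem to $M/\Q$, I would select a prime $p \in S$ unramified in $M$ together with a place $w$ of $M$ above $p$ for which $\mathrm{Frob}_w = \sigma|_M$. Extending $w$ to a place $v$ of $L$, the unramified decomposition group $\Gal(M_w/\Q_p) = \langle \mathrm{Frob}_w \rangle$ is cyclic of order $n_M$ and is a quotient of $\Gal(L_v/\Q_p)$, which has order at most $d$. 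Hence $n_M \leq d$ for every $M$, forcing $\sigma^{d!} = 1$.

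For $(3) \Rightarrow (1)$, set $e := \exp(G)$ and fix $p$ and $v \mid p$ in $L$. The decomposition group $D_v \cong \Gal(L_v/\Q_p)$ has exponent dividing $e$ and embeds as a closed subgroup of $G_p := \Gal(\overline{\Q}_p/\Q_p)$. I would exploit the standard inertia filtration $P_p \trianglelefteq I_p \trianglelefteq G_p$: the unramified quotient $G_p/I_p \cong \hat\Z$, the tame inertia $I_p/P_p \cong \prod_{\ell \neq p}\Z_\ell$, which is torsion-free, and the wild inertia $P_p$, which is torsion-free as well (a classical structural fact about mixed-characteristic local fields). A short d\'evissage then shows $I_p$ itself is torsion-free, so $D_v \cap I_p$, being a finite-exponent subgroup of a torsion-free profinite group, is trivial. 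Therefore $D_v$ injects into $\hat\Z$ as a closed subgroup of exponent dividing $e$, hence is finite cyclic of order at most $e$, giving $[L_v : \Q_p] \leq e$ uniformly in $p$ and $v$.

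The only substantive structural input, and thus the step most deserving careful citation, is the torsion-freeness of the wild inertia $P_p$ for local fields of mixed characteristic; the remainder of the argument is routine bookkeeping with Chebotarev's density theorem and with the inertia filtration of the local absolute Galois group.
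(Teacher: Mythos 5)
The paper does not prove this statement at all: it is quoted verbatim from Checcoli \cite{MR3009657} (the text even notes that Checcoli--Zannier \cite{MR2755687} contains the implication $(2)\Rightarrow(3)$, but that the stronger statement is taken from \cite{MR3009657}). So your proposal has to stand on its own. Your $(1)\Rightarrow(2)$ is trivial and your $(2)\Rightarrow(3)$ is correct: choosing, via Chebotarev, a prime $p\in S$ unramified in a finite Galois subextension $M$ with $\mathrm{Frob}_w=\sigma|_M$, and using $M_w\subset L_v$ with $[L_v:\Q_p]\le d$, does force $\mathrm{ord}(\sigma|_M)\le d$ and hence $\sigma^{d!}=1$; this is essentially the argument in the cited sources.

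Your $(3)\Rightarrow(1)$, however, has a genuine gap. The decomposition group $D_v\cong\Gal(L_v/\Q_p)$ is a \emph{quotient} of $G_p=\Gal(\overline{\Q_p}/\Q_p)$ (via restriction $\sigma\mapsto\sigma|_{L_v}$), not a closed subgroup of it, so the expression $D_v\cap I_p$ is not meaningful, and torsion-freeness of $I_p$ (or of all of $G_p$, which indeed follows from Artin--Schreier since $\Q_p$ is not formally real) says nothing about the \emph{image} of $I_p$ in the quotient $\Gal(L_v/\Q_p)$: quotients of torsion-free profinite groups have abundant torsion ($\hat\Z\twoheadrightarrow\Z/n$). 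Your conclusion that $D_v$ injects into $\hat\Z$ and is cyclic of order at most $e$ would mean that every Galois extension of $\Q$ with finite-exponent group is unramified above every prime, which is false: already $L=\Q(i)$ has exponent $2$ and is ramified at $2$, and the compositum of all quadratic fields (exponent $2$, infinite) has local group $(\Z/2\Z)^2$ at every odd prime, which is ramified and not cyclic. The implication $(3)\Rightarrow(1)$ is in fact the hard direction of Checcoli's theorem: one must bound the unramified and tame parts of $\Gal(L_v/\Q_p)$ (metacyclic, hence of order at most $e^2$ when the exponent is $e$), observe that the image of wild inertia is a pro-$p$ group of exponent dividing $e$ and is therefore trivial for the infinitely many $p>e$, and then handle the finitely many primes $p\le e$ by a separate argument (finite generation of the local Galois group, respectively local class field theory type bounds) to obtain a bound uniform in $p$. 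None of this is ``routine bookkeeping,'' and your proposed dévissage does not supply it.
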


%The implication (2) -> (3) is easy. The other way round is hard.

Remark that uniformly bounded means that the degrees are bounded independently of $p$.

In a paper of Checcoli and Zannier \cite{MR2755687} there is also the implication $(2) \Rightarrow (3)$ from the above theorem. But since we need the stronger implication $(1) \Rightarrow (3)$, we use the result of Checcoli.

\begin{example}
A field that fulfills these properties is for a fixed $d$ any subextension of $\Q^{(d)} \subset \overline{\Q}$, which is the compositum of all number fields of degree at most $d$ over $\Q$.
\end{example}

We will first prove the CM case since it follows from Theorem 1.5 of \cite{MR3182009} before we handle the more complicated non-CM case.

\begin{theorem}
Let $E$ be an elliptic curve with complex multiplication over $\Q$ and let $L/\Q$ be a Galois extension with uniformly bounded local degrees above all but finitely many primes by $d$. Then $L(E_{\text{tor}})$ has the Bogomolov property and there exists an effectively computable bound only depending on $d$.
\end{theorem}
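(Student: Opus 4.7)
The plan is to invoke Theorem~1.5 of \cite{MR3182009}, which yields the Bogomolov property with an effective lower bound for any Galois extension $M/K$ of a number field such that $\Gal(M/K)/Z(\Gal(M/K))$ has finite exponent, the bound depending only on this exponent. My first step is to embed $L(\Etor)$ into a field to which this theorem can be applied. Let $K := \End(E_{\overline{\Q}}) \otimes_{\Z} \Q$ be the CM field, an imaginary quadratic extension of $\Q$. Standard CM theory yields $K \subset \Q(\Etor)$ (the mod-$n$ Galois image lies in the normalizer of a Cartan but not in the Cartan itself) and $K(\Etor) = \Q(\Etor) \subset K^{\text{ab}}$. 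Setting $M := L \cdot K^{\text{ab}}$, we have $L(\Etor) \subset M$; since the Bogomolov property is inherited by subfields, it suffices to establish it for $M$.

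The extension $M/K$ is Galois as a compositum of the two Galois extensions $LK/K$ and $K^{\text{ab}}/K$, and restriction to the two factors gives an injection
\[
G := \Gal(M/K) \hookrightarrow A \times B,
\]
where $A := \Gal(LK/K) \cong \Gal(L/L\cap K)$ and $B := \Gal(K^{\text{ab}}/K)$. Here $B$ is abelian, and by Theorem~\ref{Sara} together with Checcoli's explicit control, $\Gal(L/\Q)$ has finite exponent bounded by an effectively computable function of $d$; hence so does $A$, say by $e = e(d)$. Because $B$ is abelian, an element $(a,b) \in G$ belongs to $Z(G)$ exactly when $a \in Z(\pi_A(G))$, so the assignment $(a,b) \mapsto a\, Z(\pi_A(G))$ induces an injection $G/Z(G) \hookrightarrow \pi_A(G)/Z(\pi_A(G))$, whose target has exponent dividing $\exp(\pi_A(G)) \le e$. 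Feeding this into Theorem~1.5 of \cite{MR3182009} produces an effective positive lower bound for $h(\alpha)$ on $M^* \setminus \mu_\infty$ depending only on $e$, and hence only on $d$; by inclusion the same bound applies on $L(\Etor)$.

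The only non-conceptual difficulty is bookkeeping: one must extract from Checcoli's proof an explicit bound on $\exp(\Gal(L/\Q))$ in terms of $d$ alone, and combine it with the explicit constant from \cite{MR3182009}, which is known to depend effectively on the exponent of $G/Z(G)$. Both ingredients are available in the cited literature, and the reduction above then yields the claimed effective bound depending only on $d$.
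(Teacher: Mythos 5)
Your proposal is correct and takes essentially the same route as the paper: both verify the centrality hypothesis of Theorem 1.5 of \cite{MR3182009} by combining the fact that the torsion field is abelian over the CM field with Checcoli's finite-exponent result (Theorem \ref{Sara}), and then invoke that theorem's effective bound. The only differences are cosmetic — you enlarge $L(E_{\text{tor}})$ to $L\cdot K^{\text{ab}}$ and phrase the condition as finite exponent of $G/Z(G)$ via the embedding into $A\times B$, whereas the paper works with $L(E_{\text{tor}})$ itself and shows $\Gal(L(E_{\text{tor}})/L)$ is central in $\Gal(L(E_{\text{tor}})/L_0)$; also note that the bound of \cite{MR3182009} is recorded in the paper as depending on a place with bounded local degrees, on $d$ and on $[L_0:\Q]=2$ rather than on the exponent alone, which changes the bookkeeping you sketch but not the conclusion.
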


\begin{proof}
Let $L_0$ be the CM field of $E$ and consider the following diagram

\begin{center}
\begin{tikzcd}
& L(E_\text{tor}) \\
\Q (E_\text{tor}) \arrow[dash]{ur} & \\
& L \arrow[dash]{uu}{H}\\
 L_0 \arrow[dash]{uu} \arrow[dash]{uuur}{G} \arrow[dash]{ur}  & \\
 \Q \arrow[dash]{u} &
\end{tikzcd}
\end{center}

Consider the restriction $f: G \to \Gal(\Q(E_\text{tor})/L_0) \times \Gal(L/L_0), \sigma \mapsto (\sigma|_{\Q(E_\text{tor})}, \sigma|_L)$. It is injective because $L(E_\text{tor})$ is the compositum of $L$ and $\Q(E_\text{tor})$, hence the only element that maps to the identity is the identity itself.\\

We want to show that $\Gal(L(E_\text{tor})/L) =:H$ is contained in the center of\newline $\Gal(L(E_\text{tor})/L_0) =: G$ since that will allow us to use Theorem 1.5 of \cite{MR3182009
} and immediately yield that $L(E_\text{tor})$ is Bogomolov.\\

Let $\sigma \in G$ and $\tau \in H$. Then $\sigma \tau |_{\Q(E_\text{tor})} = \tau \sigma |_{\Q(E_\text{tor})}$ since $G$ is abelian (because of $E$ having complex multiplication). Furthermore $\sigma \tau |_L = \sigma |_L$ since $\tau$ acts as the identity on $L$ and $\sigma |_L = \tau  \sigma |_L$ since the image of $\sigma$ is inside $L$ (because $L/L_0$ is Galois).\\

The bound in \cite{MR3182009} is effectively computable and only depends on the place $v$ above which we have uniformly bounded local degrees, $d$ and the degree $[L_0:\Q]$. Since we have uniformly bounded local degrees and $[L_0:\Q] = 2$, the bound only depends on $d$. \end{proof}

\begin{remark}
The same thing can be done for any CM abelian variety.
\end{remark}

\section{Local preliminaries}
\label{NotNFC}
For the rest of this paper we will fix an elliptic curve $E$ over $\Q$ without complex multiplication and with $j$-invariant $j_E$. Furthermore, fix a field $L$ with the properties from Theorem \ref{Sara} and call $d$ the uniform bound for the local degrees. We use the notation $F(N) = F(E[N])$ for a field $F$ and a natural number $N$. We need the following properties of a prime $p$.

\begin{align}
(P1) & \quad p \text{ is supersingular} \label{P1}\\
(P2) & \quad p \text{ is surjective} \label{P2}\\
(P3) & \quad p \geq \max(2d+2, \exp(\Gal(L/\Q))) \label{P3}\\
(P4) & \quad j_E \not\equiv 0, 1728 \mod p \label{P4}
\end{align}

We will fix a prime $p$ such that $p$ fulfills properties (P1), (P2), (P3) and (P4). For $N\in \N$ we let $N = p^n M$ where $M$ and $p$ are coprime. We want to consider every field as a subfield of a fixed algebraic closure $\overline{\Q_p}$ of $\Q_p$.  With $\Q_q$ we denote the unique quadratic unramified extension. The proof goes as follows: For an element $\alpha \in L(E_{\text{tor}})$ we will fix a finite Galois extension $K/\Q$ such that $K(E_{\text{tor}})$ contains $\alpha$ and $K\subset L \subset \overline{\Q_p}$. Set $q=p^2$ and call $\Q_q$ the unique quadratic unramified extension of $\Q_p$. Then we fix a Galois extension $F$ of $\Q_q$ such that: $\Q_q \subset F \subset \overline{\Q_p}$, the $v$-adic completion of $K$ is contained in $F$ (where $v$ extends $p$) and $[F:\Q_p]$ is uniformly bounded by $2d$ (since it is possible that we have to choose $F$ larger than $K_v$ so that it contains $\Q_q$). Since we consider all fields as subfields of $\overline{\Q_p}$ we can restrict the $p$-adic valuation of $\overline{\Q_p}$ to any subfield. Since all fields are Galois, the completion with respect to any place above $p$ will be the same.

For a natural number $N$, we consider $F(N)$ and deal with two cases: the wildly ramified case where $p^2 \mid N$ and the tamely ramified case where $p^2 \nmid N$. We start with a few technical lemmas.

\begin{lemma}
\label{equal}
Let $p^2 | N$. We have $\Q_q(N)\cap F = \Q_q (N/p) \cap F$.
\end{lemma}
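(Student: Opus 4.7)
The inclusion $\Q_q(N/p) \cap F \subseteq \Q_q(N) \cap F$ is immediate, so the content is the reverse inclusion. My plan is a short degree argument: setting $A := \Q_q(N/p) \cap F$ and $B := \Q_q(N) \cap F$, I will show that $[B:A]$ is simultaneously a power of $p$ and at most $d$, which together with property \eqref{P3} forces $[B:A] = 1$.

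For the ``power of $p$'' part, write $N = p^n M$ with $\gcd(M,p)=1$, so that $p^2 \mid N$ gives $n \geq 2$. Since $E[M] \subseteq E[N/p]$, we have $\Q_q(N) = \Q_q(N/p)\cdot \Q_q(E[p^n])$, and hence $\Gal(\Q_q(N)/\Q_q(N/p))$ is isomorphic to a subgroup of $\Gal(\Q_q(E[p^n])/\Q_q(E[p^{n-1}]))$, which in turn embeds into $\ker\!\bigl(\GL_2(\Z/p^n\Z) \to \GL_2(\Z/p^{n-1}\Z)\bigr)$, a group of order $p^4$. Hence $[\Q_q(N):\Q_q(N/p)]$ is a power of $p$. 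Since $\Q_q(N/p)/\Q_q$ is Galois, the standard compositum-degree formula yields
\[ [B:A] = [B \cdot \Q_q(N/p) : \Q_q(N/p)], \]
and the right-hand side divides $[\Q_q(N):\Q_q(N/p)]$ because $B \cdot \Q_q(N/p)$ is an intermediate field of $\Q_q(N)/\Q_q(N/p)$. Thus $[B:A]$ is a power of $p$.

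For the ``at most $d$'' part, $B \subseteq F$ yields $[B:A] \leq [B:\Q_q] \leq [F:\Q_q]$. By the choice of $F$ in the preamble, $[F:\Q_p] \leq 2d$ and $[\Q_q:\Q_p]=2$, so $[F:\Q_q] \leq d$. Property \eqref{P3} then gives $p \geq 2d+2 > d$, and the only $p$-power not exceeding $d$ is $1$; hence $[B:A] = 1$.

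I do not expect any substantial obstacle. The one point worth checking carefully is that the compositum-degree identity requires $\Q_q(N/p)/\Q_q$ to be Galois, which is automatic. The hypothesis $p^2 \mid N$ is genuinely used in the ``power of $p$'' step, since for $n=1$ the degree $[\Q_q(E[p]):\Q_q]$ generally involves factors coprime to $p$ and the argument collapses; this is precisely why the text separates the wildly ramified case $p^2 \mid N$, to which this lemma belongs, from the tame case $p^2 \nmid N$.
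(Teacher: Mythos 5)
Your proof is correct and follows essentially the same route as the paper: both arguments show that $[\Q_q(N)\cap F:\Q_q(N/p)\cap F]$ equals the degree of an intermediate extension of $\Q_q(N)/\Q_q(N/p)$, hence is a power of $p$, while also being at most $[F:\Q_q]\leq d<p$ by \eqref{P3}, forcing it to be $1$. The only cosmetic difference is that the paper quotes Habegger's Lemma 3.4\,(v) to get $\Gal(\Q_q(N)/\Q_q(N/p))\cong(\Z/p\Z)^2$, whereas you derive the (weaker but sufficient) $p$-power statement directly from the kernel of $\GL_2(\Z/p^n\Z)\to\GL_2(\Z/p^{n-1}\Z)$.
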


\begin{proof}
Recall that $p > d \geq [F:\Q_p]$. By Lemma 3.4 (v) of \cite{MR3090783} we know that
\begin{align*}
\Gal(\Q_q(N)/\Q_q(N/p)) \cong (\Z/p\Z)^2
\end{align*}
in the case of $p^2 \mid N$. We consider the following diagram where the numbers next to the lines describe the degrees of the extensions:

\begin{center}
\label{dasRichtigeDiagramm}
\makebox[0pt]{
\begin{xy}\xymatrix{
 & \Q_q(N) \ar@{-}_{p^2}[dddl]\ar@{-}[dd] &&&\\
 \\
 & \Q_q (N/p)(\Q_q (N) \cap F) \ar@{-}[dr]\ar@{-}[dl]& & && F\ar@{-}[dlll]\ar@{-}^{\leq d}[dddllll]\\
 \Q_q (N/p)\ar@{-}[dr] & & \Q_q(N) \cap F\ar@{-}[dl] &\\
 &\Q_q(N/p) \cap F \ar@{-}[d]&&&\\
& \Q_q & &&
}
\end{xy}
}
\end{center}

By the multiplicativity of the degree in a tower of field extensions, we have that
\begin{align*}
[\Q_q (N/p)(\Q_q (N) \cap F) : \Q_q (N/p)]  \text{ divides } p^2
\end{align*}

and by the above diagram
\begin{align*}
[\Q_q (N/p)(\Q_q (N) \cap F) : \Q_q (N/p)] = [\Q_q(N)\cap F : \Q_q (N/p) \cap F] \leq d < p.
\end{align*}

Hence $[\Q_q(N)\cap F : \Q_q (N/p) \cap F]$ must be one and the fields are equal. 
\end{proof}

\begin{lemma}\label{Lemma3.3wild}
Let $p^2 \mid N$. Then the extension $F(p^n)/F$ is abelian. Furthermore,
\begin{align*}
\Gal(F(p^n)/F(p^{n-1})) \cong (\Z/p\Z)^2.
\end{align*}
\end{lemma}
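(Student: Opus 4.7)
The plan is to reduce the statement to the case $F = \Q_q$, which is Lemma 3.4 of \cite{MR3090783}, and to control the change of base field via Lemma \ref{equal}. Recall that in the supersingular case the formal group $\hat{E}$ becomes a Lubin--Tate formal group over $\Z_q$, and by Habegger's Lemma 3.4 this gives that $\Q_q(p^n)/\Q_q$ is abelian with $\Gal(\Q_q(p^n)/\Q_q(p^{n-1})) \cong (\Z/p\Z)^2$.

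For the abelian claim, since $F \supseteq \Q_q$ we have $F(p^n) = F \cdot \Q_q(p^n)$. The restriction map $\Gal(F(p^n)/F) \hookrightarrow \Gal(\Q_q(p^n)/\Q_q)$ is injective (its kernel fixes $F \cdot \Q_q(p^n) = F(p^n)$), so abelianness is inherited from $\Q_q(p^n)/\Q_q$ and $F(p^n)/F$ is abelian.

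For the group $\Gal(F(p^n)/F(p^{n-1}))$, I would use the restriction homomorphism
\[
\rho : \Gal(F(p^n)/F(p^{n-1})) \longrightarrow \Gal(\Q_q(p^n)/\Q_q(p^{n-1})),
\]
which is well-defined since $\Q_q(p^{n-1}) \subseteq F(p^{n-1})$ and $\Q_q(p^n)/\Q_q$ is Galois. Its kernel consists of automorphisms fixing both $F(p^{n-1})$ and $\Q_q(p^n)$, hence also their compositum $F(p^{n-1}) \cdot \Q_q(p^n) = F \cdot \Q_q(p^n) = F(p^n)$; thus $\rho$ is injective. To finish, I would compare cardinalities: applying Lemma \ref{equal} with $N = p^n$ gives $\Q_q(p^n) \cap F = \Q_q(p^{n-1}) \cap F$, so the compositum degree formula yields
\[
[F(p^n):F(p^{n-1})] = \frac{[\Q_q(p^n) : \Q_q(p^n) \cap F]}{[\Q_q(p^{n-1}) : \Q_q(p^{n-1}) \cap F]} = [\Q_q(p^n) : \Q_q(p^{n-1})] = p^2.
\]
Since $\rho$ is then an injection between two groups of order $p^2$, it is an isomorphism, identifying $\Gal(F(p^n)/F(p^{n-1}))$ with $(\Z/p\Z)^2$.

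The only subtle point is whether Lemma \ref{equal} is strong enough here: a priori one might want an analogous statement with $F$ replaced by $F(p^{n-1})$. The degree-count above sidesteps this—we never need to identify $\Q_q(p^n) \cap F(p^{n-1})$ directly, only the index $[F(p^n):F(p^{n-1})]$, which splits as a ratio of two compositum degrees, each of which involves only intersections of $\Q_q(p^n)$ and $\Q_q(p^{n-1})$ with the smaller field $F$, to which Lemma \ref{equal} applies directly.
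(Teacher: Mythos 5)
Your argument is correct, and its first half (abelianness via the injective restriction $\Gal(F(p^n)/F)\hookrightarrow\Gal(\Q_q(p^n)/\Q_q)$) is exactly the paper's. For the second claim the skeleton also coincides — both you and the paper inject $\Gal(F(p^n)/F(p^{n-1}))$ into $\Gal(\Q_q(p^n)/\Q_q(p^{n-1}))$ by restriction and then match cardinalities — but the counting step is closed differently. The paper never invokes Lemma \ref{equal} here: it observes that the image is a subgroup of index at most $[F(p^{n-1}):\Q_q(p^{n-1})]\leq[F:\Q_q]<p$ inside a group of order $p^2$ (by Lemma 3.4 of \cite{MR3090783}), so the index must be $1$. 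You instead compute $[F(p^n):F(p^{n-1})]$ exactly, writing it as the ratio $[\Q_q(p^n):\Q_q(p^n)\cap F]/[\Q_q(p^{n-1}):\Q_q(p^{n-1})\cap F]$ via the Galois compositum degree formula and then using Lemma \ref{equal} (with $N=p^n$, legitimate since $n\geq 2$) to cancel the intersections, obtaining $p^2$ on the nose. Both routes ultimately rest on the same input $[F:\Q_q]<p$ — your version just channels it through Lemma \ref{equal}, whose proof uses that bound — so neither is more general; the paper's index-divisibility argument is marginally shorter, while yours makes the exact degree $[F(p^n):F(p^{n-1})]=p^2$ explicit rather than deducing it after the fact, and cleanly sidesteps, as you note, any need to identify $\Q_q(p^n)\cap F(p^{n-1})$ itself (which the paper only does later, in Lemma \ref{ramindex}).
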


\begin{proof}
Consider the following diagram:
\begin{center}
\makebox[0pt]{
\begin{xy}\xymatrix{
 &F(p^n) = \Q_q(p^n)F(p^{n-1})\ar@{-}[dl]\ar@{-}[dr]&\\
 \Q_q(p^n) \ar@{-}[dr]&& F(p^{n-1})\ar@{-}[dl]\\
 &\Q_q(p^n)\cap F(p^{n-1})\ar@{-}[d]&\\
 &\Q_q(p^{n-1})&
}
\end{xy}
}
\end{center}
%Das hier brauchen wir gar nicht, oder? By Lemma \ref{equal} with $N=p^n$ we have $\Q_q(p^n) \cap F = \Q_q(p^{n-1}) \cap F$, hence $\Q_q(p^{n-1})$ is a subfield of $\Q_q(p^n)\cap F(p^{n-1})$.
We get that $\Gal(F(p^n)/F(p^{n-1}))$ is isomorphic to a subgroup of $\Gal(\Q_q(p^n)/\Q_q(p^{n-1}))$ of index at most $[F:\Q_q]$. Since by Lemma 3.4 (v) of \cite{MR3090783} $\Gal(\Q_q(p^n)/\Q_q(p^{n-1}))$ has order $p^2$ and $[F:\Q_q]$ is strictly less than $p$, we must have $\Gal(F(p^n)/F(p^{n-1})) \cong \Gal(\Q_q(p^n)/\Q_q(p^{n-1}))$. By Lemma 3.3 (i) of \cite{MR3090783}, we get $\Gal(F(p^n)/F(p^{n-1})) \cong (\Z/p\Z)^2$. To prove that $F(p^n)/F$ is abelian, we look at the following diagram

\begin{center}
\makebox[0pt]{
\begin{xy}\xymatrix{
 &F(p^n) = \Q_q(p^n)F\ar@{-}[dl]\ar@{-}[dr]&\\
 \Q_q(p^n) \ar@{-}[dr]&& F\ar@{-}[dl]\\
 &\Q_q(p^n)\cap F\ar@{-}[d]&\\
 &\Q_q&
}
\end{xy}
}
\end{center}

So by \cite{MR3090783}, Lemma 3.4 (iv), $\Gal(F(p^n)/F)$ is isomorphic to a subgroup of $\Gal(\Q_q(p^n)/\Q_q)$ which is isomorphic to $\Z/(q-1)\Z \times (\Z/p^{n-1}\Z)^{2}$, hence both Galois groups have to be abelian.
\end{proof}

\begin{lemma}
\label{ramindex}
Let $p^2 \mid N$. The ramification index of the extension $F(p^n)/F$ is a multiple of $q^{n-1}$ and a divisor of $q^{n-1}(q-1)$. The extension $F(p^n)/F(p^{n-1})$ is totally ramified and its Galois group is isomorphic to $\Gal(\Q_q(p^n)/\Q_q(p^{n-1})) \cong (\Z/p\Z)^2$. In particular, $F(p^n)/F(p)$ is totally ramified.
\end{lemma}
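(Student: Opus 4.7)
The plan is to transfer information about the ``arithmetic'' extension $\Q_q(p^n)/\Q_q$ to $F(p^n)/F$ via the compositum identity $F(p^n) = F(p^{n-1}) \cdot \Q_q(p^n)$, which holds inside $\overline{\Q_p}$ because the right hand side contains $F$, $\Q_q$, and all coordinates of $E[p^n]$, hence contains $F(p^n)$, and is obviously contained in it. By Lemma 3.4 of \cite{MR3090783}, the supersingular hypothesis (P1) forces $\Q_q(p^n)/\Q_q$ to be totally ramified of degree $(q-1)q^{n-1}$; in particular each step $\Q_q(p^k)/\Q_q(p^{k-1})$ for $k\geq 2$ is totally ramified of degree $p^2$.

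To show $F(p^n)/F(p^{n-1})$ is totally ramified, I would invoke the standard local fact that the residue field of a compositum of finite extensions of a $p$-adic field equals the compositum of the residue fields. Applied to $F(p^n) = F(p^{n-1})\cdot \Q_q(p^n)$, the residue field of $\Q_q(p^n)$ coincides with that of $\Q_q(p^{n-1})$ by total ramification, which is already contained in the residue field of $F(p^{n-1})$. Hence $F(p^n)$ and $F(p^{n-1})$ share a residue field, giving residue degree $1$. Since $[F(p^n):F(p^{n-1})]=p^2$ by Lemma \ref{Lemma3.3wild}, the extension is totally ramified, and the identification of Galois groups with $\Gal(\Q_q(p^n)/\Q_q(p^{n-1})) \cong (\Z/p\Z)^2$ is precisely what Lemma \ref{Lemma3.3wild} delivers.

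For the bound on $e(F(p^n)/F)$, I would apply multiplicativity in the tower $F \subset F(p) \subset F(p^2) \subset \cdots \subset F(p^n)$. The steps above $F(p)$ are handled by the previous paragraph, so $F(p^n)/F(p)$ is totally ramified of ramification index $q^{n-1}$ (being a tower of totally ramified extensions, residue degrees multiply trivially). For the base step, the same compositum argument used in Lemma \ref{Lemma3.3wild} realises $\Gal(F(p)/F)$ as a subgroup of $\Gal(\Q_q(p)/\Q_q) \cong \Z/(q-1)\Z$, so $[F(p):F]$, and a fortiori $e(F(p)/F)$, divides $q-1$. Multiplicativity then yields $e(F(p^n)/F) = q^{n-1}\cdot e(F(p)/F)$, which is a multiple of $q^{n-1}$ and a divisor of $q^{n-1}(q-1)$, as claimed.

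The principal technical input is the residue-field-of-compositum identity in the local setting, which is classical; everything else is bookkeeping in the tower once Habegger's lemma guarantees total ramification of $\Q_q(p^n)/\Q_q$. The mildest subtlety is to apply the compositum formula at the right level of the tower, so that $F(p^n)/F(p^{n-1})$ inherits total ramification even though $F(p)/F$ itself generally does not.
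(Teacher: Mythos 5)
There is a genuine gap in the step where you prove that $F(p^n)/F(p^{n-1})$ is totally ramified. The ``standard local fact'' you invoke --- that the residue field of a compositum of finite extensions of a $p$-adic field equals the compositum of the residue fields --- is false in general; the residue field of a compositum can be strictly larger. For example, with $p$ odd and $u\in\Z_p^*$ a non-square unit, both $\Q_p(\sqrt{p})$ and $\Q_p(\sqrt{up})$ are totally ramified with residue field $\F_p$, yet their compositum contains the unramified quadratic extension $\Q_p(\sqrt{u})$ and so has residue field $\F_{p^2}$. A telltale sign of the problem is that your residue-field argument never uses the hypothesis $[F:\Q_q]<p$ (i.e.\ (P3)), whereas that coprimality is exactly what prevents the analogous phenomenon here: if $F$ were allowed to interact with the wild ramification of $\Q_q(p^n)/\Q_q(p^{n-1})$, the compositum $F(p^{n-1})\cdot\Q_q(p^n)$ could acquire an unramified part over $F(p^{n-1})$, just as in the quadratic example.

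The conclusion itself is correct, and the repair is the divisibility argument the paper uses: since $\Q_q(p^n)\subset F(p^n)$, the ramification index $e(F(p^n)/\Q_q(p^{n-1}))$ is a multiple of $e(\Q_q(p^n)/\Q_q(p^{n-1}))=q$, while $e(F(p^{n-1})/\Q_q(p^{n-1}))$ and $e(F(p^n)/\Q_q(p^n))$ are bounded by $[F:\Q_q]<p$ and hence prime to $p$; multiplicativity in the tower $\Q_q(p^{n-1})\subset F(p^{n-1})\subset F(p^n)$ then forces $q\mid e(F(p^n)/F(p^{n-1}))\leq[F(p^n):F(p^{n-1})]=q$, so the step is totally ramified and the Galois group identification from Lemma \ref{Lemma3.3wild} applies. (The paper additionally pins down $\Q_q(p^n)\cap F(p^{n-1})=\Q_q(p^{n-1})$ via Lemma \ref{equal} before running this comparison.) Once that step is fixed, your tower bookkeeping $F\subset F(p)\subset\cdots\subset F(p^n)$, with $[F(p):F]$ dividing $q-1$ by the embedding of $\Gal(F(p)/F)$ into $\Z/(q-1)\Z$, correctly yields that $e(F(p^n)/F)$ is a multiple of $q^{n-1}$ and a divisor of $q^{n-1}(q-1)$, and is in fact a slightly tidier route to that part of the statement than the paper's direct comparison over $\Q_q$.
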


\begin{proof}
We consider the following diagram:

\begin{center}
\begin{tikzcd}
& F(p^n) = \Q_q(p^n)F \arrow[dash]{dl} \arrow[dash]{dr} & \\
\Q_q(p^n)\arrow[dash]{dr} \arrow[dash]{ddr}&& F \arrow[dash]{dl} \arrow[dash]{ddl}\\
& \Q_q(p^n) \cap F\arrow[dash]{d}& \\
& \Q_q &
\end{tikzcd}
\end{center}

We want to equip this diagram with the ramification indices. From Lemma 3.3 (i) of \cite{MR3090783}, we know that $\Q_q(p^n)/\Q_q$ is totally ramified of degree $(q-1)q^{n-1}$. By construction, the extension $F/\Q_q$ has degree (hence ramification index) at most $d$ which is less than $p$. Since $\Gal(F(p^n)/\Q_q(p^n))$ is isomorphic to a subgroup of $\Gal(F/\Q_q)$, its degree has to be at most $d$ hence also the ramification index. So we get the following diagram.

\begin{center}
\begin{tikzcd}
& F(p^n) = \Q_q(p^n)F \arrow[dash, swap]{dl}{e(F(p^n)/\Q_q(p^n))} \arrow[dash]{dr}{?} & \\
\Q_q(p^n)\arrow[dash]{dr} \arrow[dash, swap]{ddr}{(q-1)q^{n-1}}&& F \arrow[dash]{dl} \arrow[dash]{ddl}{\leq d}\\
& \Q_q(p^n) \cap F\arrow[dash]{d}& \\
& \Q_q &
\end{tikzcd}
\end{center}

This shows that the ramification index of $F(p^n)/\Q_q$ is a multiple of the ramification degree of $\Q_q(p^n)/\Q_q$ which is $(q-1)q^{n-1}$. But since the ramification degree of $F/\Q_q$ is at most $d$ which is coprime to $p$, we get that the ramification degree of $F(p^n)/F$ has to be a multiple of $q^{n-1}$.

\begin{comment}
This shows that the ramification index of $F(p^n)/\Q_q$ is $e(F(p^n)/\Q_q(p^n))(q-1)q^n$. Since the ramification index of $F/\Q_q$ is at most $d < p$, the ramification index of $F(p^n)/F$ has to be at least $\frac{e(F(p^n)/\Q_q(p^n))(q-1)}{d}q^{n-1}$ and is a multiple of $q^{n-1}$.

By applying Lemma \ref{equal} several times, starting with $N =p^n$, we have that $\Q_q(p^n) \cap F = \Q_q(p)\cap F$, hence the above diagram transforms to

\begin{center}
\begin{tikzcd}
& F(p^n) = \Q_q(p^n)F \arrow[dash, swap]{dl}{e(F(p^n)/\Q_q(p^n))} \arrow[dash]{dr}{?} & \\
\Q_q(p^n)\arrow[dash]{dr} \arrow[dash, swap]{ddr}{(q-1)q^{n-1}}&& F \arrow[dash]{dl} \arrow[dash]{ddl}{\leq d}\\
& \Q_q(p) \cap F\arrow[dash]{d}{\leq q-1}& \\
& \Q_q &
\end{tikzcd}
\end{center}
and we get that the ramification degree of $F(p^n)/F$ has to be at least $q^{n-1}$, hence it is a multiple.\\
\end{comment}

With a similar diagram we can show that $F(p^n)/F(p^{n-1})$ is totally ramified. Recall that by Lemma \ref{equal} we have $\Q_q(p^{n-1})\cap F = \Q_q (p^n)\cap F$.  Hence also $\Q_q(p^{n-1}) = \Q_q(p^{n-1}) \cap F(p^{n-1})  = \Q_q(p^n) \cap F(p^{n-1})$.

\begin{center}
\label{diagram1}
\begin{tikzcd}
& F(p^n) = \Q_q(p^n)F(p^{n-1}) \arrow[dash]{dl} \arrow[dash]{dr} & \\
\Q_q(p^n)\arrow[dash]{dr}&& F(p^{n-1}) \arrow[dash]{dl}\\
& \Q_q(p^{n}) \cap F(p^{n-1})= \Q_q (p^{n-1}) \cap F(p^{n-1}) = \Q_q(p^{n-1})&
\end{tikzcd}
\end{center}

The ramification index of $\Q_q(p^n)/\Q_q(p^{n-1})$ is exactly $q$ and the ramification index of $F(p^n)/\Q_q(p^n)$ is at most its degree $[F(p^n):\Q_q(p^n)] \leq [F:\Q_q] < p$, hence not divisible by $p$. The same works for $F(p^{n-1})/\Q_q(p^{n-1})$. By looking at the divisibility we see that the ramification index of $F(p^n)/F(p^{n-1})$ also has to be $q$, hence it is totally ramified. Furthermore, $\Gal(F(p^n)/F(p^{n-1}))$ is isomorphic to $\Gal(\Q_q(p^n)/\Q_q(p^{n-1}))$.

\end{proof}

The following lemma is the analogue of Lemma 3.4 of \cite{MR3090783}.

\begin{lemma}
\label{Lemma3.4}
The following statements hold.
\begin{enumerate}[(i)]
\item The compositum $F(p^n)F(M)$ is $F(N)$.
\item The extension $F(N)/F(p^n)$ is unramified.
\item The Galois group $\Gal(F(N)/F(M))$ is abelian.
\item If $n \geq 2$, then $\Gal(F(N)/F(N/p)) \cong \Gal(F(p^n)/F(p^{n-1})) \cong  (\Z/p\Z)^2$ and the extension $F(N)/F(N/p)$ is totally ramified.
\item The image of the representation $\Gal(F(p^n)/F) \to \Aut(E[p^n])$ contains multiplication by $M^{[F:\Q_q]}$.
\item The Galois group $\Gal(F(p)/F)$ is isomorphic to a subgroup of $\Z/(q-1)\Z$.
\end{enumerate}
\end{lemma}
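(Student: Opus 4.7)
The overall strategy is to mimic the proof of Lemma 3.4 of \cite{MR3090783} by reducing each part to the corresponding statement over $\Q_q$, using the natural restriction isomorphism $\Gal(F(p^n)/F) \cong \Gal(\Q_q(p^n)/\Q_q(p^n) \cap F)$ (and analogous isomorphisms with $N$ or $p$ in place of $p^n$) coming from the compositum structure $F(p^n) = F \cdot \Q_q(p^n)$.

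For (i), I would use that $E[N] = E[p^n] \oplus E[M]$ because $\gcd(p^n, M) = 1$, so the coordinates of $N$-torsion points lie in the compositum of the coordinate fields of $p^n$- and $M$-torsion points. For (ii), since $p$ is supersingular, $E$ has good reduction at $p$, so by N\'eron--Ogg--Shafarevich the extension $F(M)/F$ is unramified; hence $F(N)/F(p^n)$, obtained from $F(p^n)$ by adjoining $E[M]$, is unramified as well. For (iii), the natural isomorphism $\Gal(F(N)/F(M)) \cong \Gal(F(p^n)/F(p^n) \cap F(M))$ realises $\Gal(F(N)/F(M))$ as a quotient of $\Gal(F(p^n)/F)$, which is abelian by Lemma \ref{Lemma3.3wild}.

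For (iv), I would first identify $\Gal(F(N)/F(N/p))$ with $\Gal(F(p^n)/F(p^n) \cap F(N/p))$ and then prove that $F(p^n) \cap F(N/p) = F(p^{n-1})$. The inclusion $\supseteq$ is clear. For the reverse, the field $(F(p^n) \cap F(N/p))/F(p^{n-1})$ is a subextension of $F(p^n)/F(p^{n-1})$, which is totally ramified of degree $p^2$ by Lemma \ref{ramindex}, and simultaneously of $F(N/p)/F(p^{n-1})$, which is unramified by applying (ii) to $N/p = p^{n-1} M$. Being both totally ramified and unramified, it must be trivial, yielding $\Gal(F(N)/F(N/p)) \cong \Gal(F(p^n)/F(p^{n-1})) \cong (\Z/p\Z)^2$. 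Total ramification of $F(N)/F(N/p)$ then follows from multiplicativity of ramification indices in the tower $F(p^{n-1}) \subset F(N/p) \subset F(N)$, since the middle extension is unramified while the full extension has ramification index $p^2$.

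For (v), Habegger's Lemma 3.4 over $\Q_q$ produces $\sigma \in \Gal(\Q_q(p^n)/\Q_q)$ acting on $E[p^n]$ as multiplication by $M$; the group $\Gal(\Q_q(p^n)/\Q_q) \cong \Z/(q-1)\Z \times (\Z/p^{n-1}\Z)^2$ is abelian, and $\Gal(F(p^n)/F)$ embeds into it as a subgroup of index at most $[F:\Q_q]$, so $\sigma^{[F:\Q_q]}$ already lies in this subgroup and hence corresponds to an element of $\Gal(F(p^n)/F)$ acting on $E[p^n]$ as multiplication by $M^{[F:\Q_q]}$. For (vi), the analogous restriction identifies $\Gal(F(p)/F)$ with a subgroup of $\Gal(\Q_q(p)/\Q_q) \cong \Z/(q-1)\Z$. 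The main obstacle will be the ramification bookkeeping in (iv), where one has to carefully separate the totally ramified $p$-power torsion tower from the unramified $M$-torsion tower; the remaining parts follow essentially formally once Habegger's original lemma and the compositum isomorphisms are in hand.
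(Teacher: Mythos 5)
Your proposal is correct and follows essentially the same route as the paper: each part is reduced to Habegger's results over $\Q_q$ via the compositum/restriction isomorphisms, with (iv) handled by the unramified--totally ramified intersection argument and (v) by taking a suitable power of a preimage of multiplication by $M$ (the paper lifts to $\Gal(F(p^n)/\Q_q)$ and projects, but this is the same idea). Two small repairs: in (iii) the restriction realises $\Gal(F(N)/F(M))$ as a \emph{subgroup}, not a quotient, of $\Gal(F(p^n)/F)$ (harmless, since subgroups of abelian groups are abelian), and in (v) you need that the index of the image of $\Gal(F(p^n)/F)$ in $\Gal(\Q_q(p^n)/\Q_q)$ \emph{divides} $[F:\Q_q]$ --- it equals $[\Q_q(p^n)\cap F:\Q_q]$, a divisor of $[F:\Q_q]$ by multiplicativity of degrees --- since ``index at most $[F:\Q_q]$'' alone would not force $\sigma^{[F:\Q_q]}$ into that subgroup.
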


\begin{proof}
Every $N$-torsion point is the sum of a $p^n$-torsion point and an $M$-torsion point. Hence, the composition $F(p^n)F(M)$ has to be equal to $F(N)$ which is the statement in (i).\\
For (ii) we consider the following diagram:

\begin{center}
\makebox[0pt]{
\begin{xy}\xymatrix{
& F(N) = \Q_q(N)F(p^n) \ar@{-}[dl]\ar@{-}[dr]& \\
\Q_q(N)\ar@{-}[dr] && F(p^n)\ar@{-}[dl] \\
& \Q_q(N) \cap F(p^n) &\\
&\Q_q(p^n)&
}
\end{xy}
}
\end{center}
Since the extension $\Q_q(N) /\Q_q(p^n)$ is unramified by Lemma 3.4 (ii) \cite{MR3090783}, the subextension $\Q_q(N)/\Q_q(N) \cap F(p^n)$ also has to be unramified. Hence by \cite{MR1697859} Proposition 7.2, the extension $F(N)/F(p^n)$ extension also has to be unramified.\\

For (iii) we consider the following diagram:

\begin{center}
\begin{tikzcd}
& F(N) = \Q_q(p^n)F(M) \arrow[dash]{dl} \arrow[dash]{dr} & \\
\Q_q(p^n)\arrow[dash]{dr} \arrow[dash, swap]{ddr}&& F(M) \arrow[dash]{dl} \arrow[dash]{ddl}\\
& \Q_q(p^n) \cap F(M)\arrow[dash]{d}& \\
& \Q_q &
\end{tikzcd}
\end{center}

So $\Gal(F(N)/F(M))$ is isomorphic to a subgroup of $\Gal(\Q_q(p^n)/\Q_q)$ which is by \cite{MR3090783}, Lemma 3.4 (iv), isomorphic to $\Z/(q-1)\Z \times (\Z/p^{n-1} \Z)^2$, so it has to be abelian.\\

For (iv) we recall Lemma 3.4 (iv) of \cite{MR3090783}:
\begin{align*}
\Gal(\Q_q(N)/\Q_q(N/p)) \cong  \begin{cases}
(\Z/p\Z)^2 &\mbox{ if } n\geq 2,\\
\Z/(q-1)\Z &\mbox{ if } n = 1.
\end{cases}
\end{align*}

Let now $n\geq 2$. We want to use Lemma 2.1 (i) of \cite{MR3090783} with the unramified extension $F(N/p)/F(p^{n-1})$ (see Lemma \ref{Lemma3.4} (ii)) and the totally ramified extension $F(p^n)/F(p^{n-1})$ (see Lemma \ref{ramindex}). We get $F(p^{n-1}) = F(p^n)\cap F(N/p)$ and with the following diagram 
\begin{center}
\makebox[0pt]{
\begin{xy}\xymatrix{
&F(N) = F(p^n)F(N/p)\ar@{-}[dr]\ar@{-}[dl]&\\
F(p^n)\ar@{-}[dr] &&F(N/p)\ar@{-}[dl]\\
&F(p^{n-1}) = F(p^n)\cap F(N/p)&
}
\end{xy}
}
\end{center} we can use Lemma \ref{Lemma3.3wild} to get
\begin{align*}
\Gal(F(N)/F(N/p)) \cong \Gal(F(p^n)/F(p^{n-1})) \cong (\Z/p\Z)^2.
\end{align*}

With Lemma (ii) 2.1 of \cite{MR3090783}, we get that the extension $F(N)/F(N/p)$ is totally ramified. \\

\begin{comment}
Now we need the following diagram (which we have already seen in Lemma \ref{ramindex} where it was about the ramification degree).

\begin{center}
\begin{tikzcd} & F(p^n) = \Q_q(p^n)F(p^{n-1}) \arrow[dash]{dl} \arrow[dash]{dr} & \\
\Q_q(p^n)\arrow[dash]{dr} \arrow[dash]{ddr}&& F(p^{n-1}) \arrow[dash]{ddl}\\
& \Q_q(p^{n-1})\arrow[dash]{d}&\\
& \Q_q(p^{n}) \cap F(p^{n-1})= \Q_q (p^{n-1}) \cap F(p^{n-1})&

\end{tikzcd}
\end{center}
With Lemma \ref{ramindex} we get that
\begin{align*}
(\Z/p\Z)^2 &\cong \Gal(\Q_q(p^n)/\Q_q(p^{n-1})) \\
&\cong \Gal(F(p^n)/F(p^{n-1})) \cong \Gal(F(N)/F(N/p)),
\end{align*}
which is what we wanted to show.\\
\end{comment}

We now come to part (v). By Lemma 3.3 (iii) of \cite{MR3090783}, the image of the representation $\Gal(\Q_q(p^n)/\Q_q) \to \Aut (E[p^n])$ contains multiplication by $M$. Let us call $\sigma$ the preimage of multiplication by $M$. We have a representation $\Gal(F(p^n)/\Q_q) \to \Aut (E[p^n])$ that is compatible to the above one and we can choose an element in $\Gal(F(p^n)/\Q_q)$ that restricts to $\sigma$ in $\Gal(\Q_q(p^n)/\Q_q)$. We will call this element also $\sigma$. Since $\Gal(F(p^n)/F)$ is a normal subgroup of $\Gal(F(p^n)/\Q_q)$, we can look at the projection $f: \Gal(F(p^n)/\Q_q) \to \Gal(F(p^n)/\Q_q) / \Gal(F(p^n)/F)$. The index of $\Gal(F(p^n)/\Q_q)$ in $\Gal(F(p^n)/F)$ is equal to $[F:\Q_q]$. So \begin{align*}
f(\sigma^{[F:\Q_q]}) = f(\sigma)^{[F:\Q_q]} = id.
\end{align*}
Hence $\sigma^{[F:\Q_q]} $ is an element of $\Gal(F(p^n)/F)$ and it will act as multiplication by $M^{[F:\Q_q]} $ on $E[p^n]$.\\

For (vi) we consider the following diagram.

\begin{center}
\begin{tikzcd}
& F(p) = \Q_q(p)F \arrow[dash, swap]{dl} \arrow[dash]{dr} & \\
\Q_q(p)\arrow[dash]{dr} && F \arrow[dash]{dl}\\
& \Q_q(p) \cap F\arrow[dash]{d}& \\
& \Q_q &
\end{tikzcd}
\end{center}

By \cite{MR3090783}, Lemma 3.4 (iv), we know that $\Gal(\Q_q(p)/\Q_q) \cong \Z/(q-1)\Z$. So $\Gal(F(p)/F)$ has to be isomorphic to a subgroup of $\Z/(q-1)\Z$.
\end{proof}

Recall the definition of the higher ramification groups:
\begin{align*}
G_i (L/K) := \{ \sigma \in \Gal (L/K) |  \forall a \in \cO_K \text{ we have } w(\sigma (a) - a) \geq i+1 \}.
\end{align*}

\begin{lemma}
\label{HigherRamificationGroup}
Let $p^2 \mid N$. Then there is $s\geq q^{n-1}-1$ such that
\begin{align*}
\Gal(F(N)/F(N/p)) \subset G_s (F(N)/F).
\end{align*}
\end{lemma}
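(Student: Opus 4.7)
Proof plan:

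The strategy is to use the supersingular formal group $\hat{E}$ of $E$ at $p$ to compute directly the action of $\sigma \in \Gal(F(N)/F(N/p))$ on a convenient element of $F(N)$, and then to transfer that estimate to a uniformizer.

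First I would reduce the problem to ramification in $F(p^n)/F$. Since $F(N)/F(p^n)$ is unramified by Lemma \ref{Lemma3.4} (ii) and $F(M)/F$ is unramified (as $\gcd(M,p)=1$), the subextension $F(p^n)/F$ is totally ramified, the inertia group $I(F(N)/F)$ maps isomorphically onto $\Gal(F(p^n)/F)$ via restriction, and $\Gal(F(N)/F(N/p))$ lies in $I(F(N)/F)$ because by Lemma \ref{Lemma3.4} (iv) the extension $F(N)/F(N/p)$ is totally ramified of degree $q$, hence has trivial residue extension. Writing $\mathcal{O}_{F(N)} = \mathcal{O}_{F(p^n)}[\zeta]$ for a Teichm\"uller root of unity $\zeta$ (which is fixed by any element of $I(F(N)/F)$) and using that $w_{F(N)}$ restricts to $w_{F(p^n)}$ on $F(p^n)$ since $F(N)/F(p^n)$ is unramified, one checks $\sigma \in G_s(F(p^n)/F) \Rightarrow \sigma \in G_s(F(N)/F)$, so it suffices to show $\sigma \in G_{q^{n-1}-1}(F(p^n)/F)$.

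Next, since $p$ is supersingular by property (P1), the formal group $\hat{E}$ has height $2$, so $[p]_{\hat{E}}(X) \equiv u X^q \pmod{p}$ for a unit $u$, and $E[p^n]$ embeds into $\hat{E}(\overline{\Q_p})$. A Newton polygon argument shows that a primitive $p^n$-torsion point $T_n$ satisfies $v(T_n) = 1/(q^{n-1}(q-1))$ (where $v(p)=1$ on $\overline{\Q_p}$) and $F(p^n) = F(T_n)$. Because $\sigma$ fixes $F(p^{n-1})$ and $[p]T_n \in E[p^{n-1}]$, we get $\sigma(T_n) = T_n +_{\hat{E}} T'$ for some $T' \in \hat{E}[p]$ with $v(T') = 1/(q-1)$. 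Writing the formal group law as $X +_{\hat{E}} Y = X + Y + XY f(X,Y)$ gives
\[
\sigma(T_n) - T_n = T'\bigl(1 + T_n f(T_n, T')\bigr),
\]
and the second factor is a unit because $v(T_n) > 0$, so $v(\sigma T_n - T_n) = v(T') = 1/(q-1)$ when $\sigma$ is nontrivial. Rescaling to $w_{F(p^n)} = e(F(p^n)/\Q_p) \cdot v$ and using $e(F(p^n)/\Q_q) \geq e(\Q_q(p^n)/\Q_q) = q^{n-1}(q-1)$ yields $w_{F(p^n)}(\sigma T_n - T_n) \geq q^{n-1}$.

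Finally, to conclude $\sigma \in G_{q^{n-1}-1}(F(p^n)/F)$ one needs the same bound on a uniformizer $\pi$ of $F(p^n)$, not just on $T_n$. Set $k := w_{F(p^n)}(T_n) = e(F(p)/\Q_q(p))$; this is a positive integer with $k \leq e_F \leq d < p$, so in particular $\gcd(k,q)=1$. If $k=1$ then $T_n$ itself is a uniformizer and we are done. The main technical obstacle is the case $k>1$: here I would apply Bezout to find integers $a,b$ with $ak+bq=1$ and take $\pi := T_n^{a} \cdot \pi_{F(p^{n-1})}^{b}$ as a uniformizer of $F(p^n)$ (with the understanding that a negative power is absorbed into the $a$-part via an exponent shift if needed). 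Using that $\sigma$ fixes $\pi_{F(p^{n-1})}$ and the factorisation $\sigma(T_n^a) - T_n^a = (\sigma T_n - T_n)(\sigma(T_n)^{a-1} + \dots + T_n^{a-1})$, a short calculation gives
\[
w_{F(p^n)}(\sigma\pi - \pi) \;\geq\; k(q^{n-1}-1) + 1 \;\geq\; q^{n-1},
\]
so $\sigma \in G_{q^{n-1}-1}(F(p^n)/F)$, completing the proof.
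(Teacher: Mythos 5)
Your route is genuinely different from the paper's. You redo the ramification estimate directly with the supersingular formal group at the level of $F$: Newton polygon for $[p]$, valuations of torsion points, and a hand-built uniformizer $T_n^a\pi_{F(p^{n-1})}^b$, arriving at the explicit value $s=q^{n-1}-1$. The paper instead quotes Habegger's computation over $\Q_q$ (that $\Gal(\Q_q(p^n)/\Q_q(p^{n-1}))=G_{q^{n-1}-1}(\Q_q(p^n)/\Q_q)$), transports it to $F(p^n)/\Q_q$ via Herbrand's theorem (getting only some $s\geq q^{n-1}-1$), forces all of $\Gal(F(p^n)/F(p^{n-1}))$ into that ramification group by raising to a power coprime to $p$, and finally passes from $F(p^n)$ to $F(N)$ with Habegger's Lemma 2.1(iii). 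Your computation, if completed, is more self-contained and gives a sharper statement than the lemma requires; the paper's is shorter because it leans on the $\Q_q$-level result.

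There is, however, one genuine misstep. You assert that $F(p^n)/F$ is totally ramified, deducing this from $F(N)/F(p^n)$ and $F(M)/F$ being unramified; that deduction is not valid, and the claim is false in general: compositing $F$ with the totally ramified extension $\Q_q(p^n)$ can create a residue field extension (for instance, if $F$ is a ramified Kummer extension $\Q_q((u\varpi)^{1/m})$ with $m\mid q-1$ and $u$ a unit that is not an $m$-th power, then $F(p)\supset\Q_q(\varpi^{1/m},(u\varpi)^{1/m})\ni u^{1/m}$, which is unramified over $\Q_q$ and not in $F$). This is precisely why Lemma \ref{ramindex} of the paper only pins the ramification index of $F(p^n)/F$ between $q^{n-1}$ and $q^{n-1}(q-1)$ and never claims total ramification. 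The point is load-bearing for you, because your last step invokes the criterion ``$\sigma\in G_i(F(p^n)/F)$ if $w_{F(p^n)}(\sigma\pi-\pi)\geq i+1$ for a uniformizer $\pi$'', which needs $\mathcal{O}_{F(p^n)}=\mathcal{O}_F[\pi]$, i.e.\ total ramification. The gap is repairable by the same device you already use for $F(N)/F(p^n)$: since $F(p^n)/F(p^{n-1})$ is totally ramified (Lemma \ref{ramindex}), your $\sigma$ acts trivially on the residue field of $F(p^n)$, hence fixes a Teichm\"uller generator $\zeta'$ of the maximal unramified part, and $\mathcal{O}_{F(p^n)}=\mathcal{O}_F[\zeta',\pi]$, so the bound on $w_{F(p^n)}(\sigma\pi-\pi)$ still gives $\sigma\in G_{q^{n-1}-1}(F(p^n)/F)$; but as written this step is missing. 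Two smaller remarks: the assertion $F(p^n)=F(T_n)$ does not follow from the Newton polygon alone (it needs Lubin--Tate-type input), though you never actually use it; and restriction identifies $I(F(N)/F)$ with $I(F(p^n)/F)$, not with all of $\Gal(F(p^n)/F)$ -- again harmless, since all you need is that $\Gal(F(N)/F(N/p))$ lies in the inertia group, which follows from Lemma \ref{Lemma3.4} (iv).
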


\begin{proof}
First, we want to show that $\Gal(F(p^n)/F(p^{n-1})) \subset G_s (F(p^n)/\Q_q)$ for some $s\geq q^{n-1}-1$.\\

By Lemma 3.3 (ii) of \cite{MR3090783}, we know that
\begin{align*}
\Gal(\Q_q(p^n)/\Q_q(p^{n-1})) = G_{q^{n-1}-1}(\Q_q(p^n)/\Q_q).
\end{align*}
So we take an element $\psi$ of \newline $\Gal(F(p^n)/F(p^{n-1}))$ and look at the restriction to $\Q_q(p^n)$ which will be an element of $\Gal(\Q_q(p^n)/\Q_q(p^{n-1}))$ and hence of $G_{q^{n-1}-1}(\Q_q(p^n)/\Q_q)$.  We will use Herbrand's Theorem (Theorem 10.7 of \cite{MR1697859}) which says that for any $s \geq -1$
\begin{align*}
(G_s (F(p^n)/\Q_q) \Gal(F(p^n)/\Q_q(p^n)))/\Gal(F(p^n)/\Q_q(p^n)) = G_t (\Q_q(p^n)/\Q_q)
\end{align*}

where $t$ depends on $s$. By Proposition IV.12 of \cite{MR554237} $t$ is given by a continuous and increasing function of $s$ that maps $0$ to $0$ and goes to infinity as $s$ goes to infinity. By the piecewiese linearity seen in the equation on p. 73 of \cite{MR554237}, we see that for $t=q^{n-1}-1$ we can find $s$ such that the above is true and $s\geq t$.\\

Now since the restriction $\psi|_{\Q_q(p^n)}$ is an element of $G_{q^{n-1}-1} (\Q_q(p^n)/\Q_q)$ we find $\sigma_1 \in G_s (F(p^n)/\Q_q)$ and $\sigma_2 \in \Gal(F(p^n)/\Q_q(p^n))$ such that $\psi = \sigma_1 \sigma_2$. Since $G_s(F(p^n)/\Q_q)$ is a normal subgroup of $\Gal(F(p^n)/\Q_q)$, we can consider \newline $\Gal(F(p^n)/\Q_q)/G_s(F(p^n)/\Q_q)$. We want to consider the homomorphism of groups
\begin{align*}
f: \Gal(F(p^n)/\Q_q) \to \Gal(F(p^n)/\Q_q)/G_s (F(p^n)/\Q_q).
\end{align*}
Since $\sigma_1 \in G_s (F(p^n)/\Q_q)$ we have $f(\sigma_1) = \id$. Furthermore,
\begin{align*}
f(\sigma_1 \sigma_2)^{[F(p^n):\Q_q(p^n)]} &= (f(\sigma_1) f(\sigma_2))^{[F(p^n):\Q_q(p^n)]}\\
&= f(\sigma_2)^{[F(p^n):\Q_q(p^n)]}\\
&= f(\sigma_2^{[F(p^n):\Q_q(p^n)]})\\
& = f(\id)\\
& = \id.
\end{align*}

So with $e := [F:\Q_q]!$ we can make sure that $(\sigma_1 \sigma_2)^e \in G_s (F(p^n)/\Q_q)$. But since $\psi$ was in $\Gal(F(p^n)/F(p^{n-1}))$ which is by Lemma \ref{Lemma3.4} (iv) isomorphic to $(\Z/p\Z)^2$ and $e$ is coprime to the order of $\Gal(F(p^n)/F(p^{n-1}))$, we can find $\tilde{\psi}\in \Gal(F(p^n)/F(p^{n-1}))$ such that $\tilde{\psi}^e = \psi$.\\

Hence we get that 
\begin{align}
\label{HigherRamification1}
\Gal(F(p^n)/F(p^{n-1})) \subset G_s (F(p^n)/\Q_q) = G
\end{align}
and we showed that there exists $s\geq q^{n-1}-1$ such that $G_s (F(p^n)/\Q_q)$ has order at least $p^2$.

Now by Lemma 2.1 (iii) of \cite{MR3090783} and with $F(N/p)/F(p^{n-1})$ unramified and $F(p^n)/F(p^{n-1})$ totally ramified, we have
\begin{align}
\label{HigherRamification2}
\Gal(F(N)/F(N/p)) \cap G_s(F(N)/F(p^{n-1})) \cong G_s (F(p^n)/F(p^{n-1}))
\end{align}
by restriction. By Lemma \ref{Lemma3.4} (iv), $\Gal(F(N)/F(N/p))$ must have order $p^2$ and since $G_s (F(p^n)/F(p^{n-1}))$ also has order $p^2$, they have to be isomorphic by restriction. By set theory, we then get
\begin{align}
\Gal(F(N)/F(N/p))& = \Gal(F(N)/F(N/p)) \cap G_s(F(N)/F(p^{n-1})) \nonumber \\
\label{HigherRamification3}&\subset G_s(F(N)/F(p^{n-1})).
\end{align}

By the formal definition of the higher ramification group we get that
\begin{align*}
G_s(F(N)/F(p^{n-1})) = G_s (F(N)/F) \cap \Gal(F(N)/F(p^{n-1})) \subset G_s(F(N)/F).
\end{align*}

Hence $\Gal(F(N)/F(N/p))$ is a subgroup of $G_s(F(N)/F)$ which is what we wanted to show.
\begin{comment}
Now consider $\psi\in \Gal(F(N)/F(N/p))$. The restriction $\psi|_{F(p^n)}$ will be in $\Gal(F(p^n)/F(p^{n-1}))$ hence also in $G_s (F(p^n)/\Q_q)$ for some $s \geq q^{n-1}-1$ by \eqref{HigherRamification1}.

Since $G_s (F(p^n)/\Q_q) \cap \Gal(F(p^n)/F(p^{n-1}))$ is a subgroup of $G_s (F(p^n)/F)$, the restriction $\psi|_{F(p^n)}$ has to be in $G_s (F(p^n)/F)$. By Lemma \ref{Lemma3.4} (iv), $\psi$ is the unique element in $\Gal(F(N)/F(N/p))$ that coincides with $\psi|_{F(p^n)}$ on $F(p^n)$. By \eqref{HigherRamification2}, $\psi$ must already be in $G_s(F(N)/F(p^{n-1}))$ and hence also in $G_s(F(N)/F)$.
\end{comment}
\end{proof}

\begin{lemma}
\label{Lemma3.5}
Let $n\geq 2$. We have $F(N) \cap \mu_{p^\infty} = \mu_{p^n}$.
\end{lemma}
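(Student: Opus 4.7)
The plan is to establish the two inclusions separately. For $\mu_{p^n} \subseteq F(N)$ the Weil pairing gives $\mu_{p^n} \subseteq \Q(E[p^n]) \subseteq F(p^n) \subseteq F(N)$, which is immediate.

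For the reverse inclusion I will argue by contradiction and assume that some primitive $p^{n+1}$-st root of unity $\zeta$ lies in $F(N)$. The analogous statement over $\Q_q$, namely $\Q_q(N)\cap \mu_{p^\infty}=\mu_{p^n}$ for $n\geq 2$, is proved by Habegger in \cite{MR3090783}, so $\zeta\notin \Q_q(N)$. Because $\mu_{p^n}\subseteq \Q_q(N)$ and $[\Q_q(\mu_{p^{n+1}}):\Q_q(\mu_{p^n})]=p$ is prime, the tower
\begin{equation*}
\Q_q(N) \subsetneq \Q_q(N)(\zeta) \subseteq F(N)
\end{equation*}
has first step of degree exactly $p$.

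On the other hand, since $\Q_q\subseteq F$ we have $F(N)=F(E[N])=F\cdot \Q_q(E[N])=F\cdot \Q_q(N)$, and $F/\Q_q$ is Galois, so the standard compositum bound yields
\begin{equation*}
[F(N):\Q_q(N)] \text{ divides } [F:\Q_q] \leq d < p,
\end{equation*}
where the last inequality is property (P3). Multiplicativity of degrees in the tower above then forces $p \mid [F(N):\Q_q(N)] < p$, a contradiction.

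The one point requiring care is the invocation of Habegger's $\Q_q$-version of the lemma; once that is granted the proof reduces to a comparison of degrees that exploits the key fact $[F:\Q_q]<p$ guaranteed by (P3) throughout this section.
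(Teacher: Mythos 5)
Your proof is correct, but it takes a genuinely different route from the paper's. The paper invokes Habegger's Lemma 3.5 (the equality $\Q_q(N)\cap\mu_{p^\infty}=\mu_{p^n}$) only for the easy inclusion $\mu_{p^n}\subset F(N)$, and then reproves the hard inclusion at the level of $F$: it determines $\Gal(F(p^n)/F)\cong(\Z/p^{n-1}\Z)^2\times A$ with $A$ a subgroup of $\Z/(q-1)\Z$, compares this with the possible groups $\Gal(F(\zeta)/F)$ to get $F(p^n)\cap\mu_{p^\infty}=\mu_{p^n}$, and finally passes from $F(p^n)$ to $F(N)$ by playing the unramified extension $F(N)/F(p^n)$ against the totally ramified extensions of Lemma \ref{ramindex}. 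You instead use the full strength of the $\Q_q$-level equality: a primitive $p^{n+1}$-st root of unity $\zeta\in F(N)$ would produce a step $\Q_q(N)\subsetneq\Q_q(N)(\zeta)\subseteq F(N)$ of degree exactly $p$ (since $\Q_q(\mu_{p^{n+1}})/\Q_q(\mu_{p^n})$ is Galois of prime degree $p$ and $\mu_{p^n}\subset\Q_q(N)$), whereas $[F(N):\Q_q(N)]=[F:F\cap\Q_q(N)]\leq[F:\Q_q]\leq d<p$ because $F/\Q_q$ is Galois and $F(N)=F\cdot\Q_q(N)$; multiplicativity gives the contradiction. This is valid; you should just add the one-line reduction that if $F(N)\cap\mu_{p^\infty}\not\subseteq\mu_{p^n}$ then a suitable power of the offending root of unity is a primitive $p^{n+1}$-st root of unity. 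Your argument is shorter and bypasses all ramification and group-structure considerations; what the paper's longer proof buys is the byproduct $\Gal(F(p^n)/F)\cong(\Z/p^{n-1}\Z)^2\times A$, which is cited again later (in the proof of Lemma \ref{Lemma6.2}), so substituting your proof would require relocating that computation.
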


\begin{comment}
Before we prove the lemma, let us recall that by Goursat's Lemma, specialized to our needs in Theorem 2 of \cite{MR2508141}, we have that for finite groups $A$ and $B$ of coprime order, every subgroup of the direct product of $A$ and $B$ is a direct product of subgroups of $A$ and $B$.
\end{comment}

\begin{proof}
Since by Lemma 3.5 of \cite{MR3090783} $\Q_q(N) \cap \mu_{p^\infty} = \mu_{p^n}$, we have $F(N) \supset \mu_{p^n}$ and we only have to show "$\subset$". We will closely follow Habegger's proof of Lemma 3.5 of \cite{MR3090783} and first show that $F(p^n) \cap \mu_{p^\infty} = \mu_{p^n}$. Let $\zeta \in F(p^n)$ be a root of unity of order $p^{n^\prime}$ with $n^\prime \geq n$. By restricting we get a surjective homomorphism
\begin{align*}
\Gal (F(p^n)/F) \twoheadrightarrow \Gal(F(\zeta)/F).
\end{align*}
We will later prove that the left group is isomorphic to $(\Z/p^{n-1} \Z)^2 \times A$ where $A$ is a subgroup of $\Z/(q-1)\Z$. The right part is isomorphic to a subgroup of $\Gal(\Q_p(\zeta)/\Q_p)$ which itself is isomorphic to $\Z/p^{n^\prime-1}\Z \times \Z/(p-1)\Z$ by Proposition II.7.13 of \cite{MR1697859}. Remark that $\Z/p^{n^\prime-1}\Z \times \Z/(p-1)\Z$ is cyclic since $p-1$ and $p$ are coprime, hence all subgroups are direct products of subgroups. Since the index of $\Gal(F(\zeta)/F)$ in $\Gal(\Q_p(\zeta)/\Q_p)$ can be at most $[F:\Q_p]$ which is less than $p$, we must have that $\Gal(F(\zeta)/F)$ is actually isomorphic to $\Z/p^{n^\prime-1}\Z \times A$ where $A$ is a subgroup of $\Z/(p-1)\Z$. Recall that $\Gal F(p^n)/F)$ is isomorphic to a subgroup of $\Gal(\Q_q(p^n)/\Q_q) \cong (\Z/p^{n-1}\Z)^2 \times \Z/(p-1)\Z$. So the homomorphism can only be surjective if it maps $(\Z/p^{n-1} \Z)^2$ surjectively to $\Z/p^{n^\prime-1}\Z$ which is only possible when $n \geq n^\prime$. Together with $n^\prime \geq n$ we get that $n^\prime = n$. Let now $\zeta \in F(N)$ be a root of unity of order $p^{n^\prime}$ with $n^\prime \geq n$. The extension $F(N)/F(p^n)$ is unramified, hence also $F(p^n)(\zeta)/F(p^n)$. By the properties of the Weil pairing we know that $\zeta \in \Q_p(p^{n^\prime}) \subset F(p^{n^\prime})$. By Lemma \ref{ramindex}, the extension $F(p^{n^\prime})/F(p^n)$ is totally ramified and so is $F(p^n)(\zeta)/F(p^n)$. Hence this extension must be trivial and we have $\zeta\in F(p^n)$.\\

So let us now prove that $\Gal(F(p^n)/F) \cong (\Z/p^{n-1} \Z)^2 \times A$. Recall that $p$ and $[F:\Q_q]$ are coprime and consider the following diagrams:

\begin{center}
\begin{tikzcd}
& F(p^n) \arrow[dash]{dl} \arrow[dash]{dr}&     &  & F(p^n)  \arrow[dash]{dl} \arrow[dash]{dr}& \\
\Q_q(p^n) \arrow[dash]{dr}&&F(p) \arrow[dash]{dl} & \Q_q(p^n) \arrow[dash]{dr}&&F \arrow[dash]{dl}\\
&\Q_q(p^n) \cap F(p) \arrow[dash]{d}&       &      &\Q_q(p^n) \cap F \arrow[dash]{d}& \\
&\Q_q (p) &       &      & \Q_q &
\end{tikzcd}	
\end{center}

The diagram on the right hand side shows that $\Gal(F(p^n)/F)$ is isomorphic to a subgroup $(\Z/p^{n-1} \Z)^2 \times \Z/(q-1)\Z$. The diagram on the left hand side shows that $\Gal(F(p^n)/F(p))$ is isomorphic to a subgoup of $(\Z/p^{n-1} \Z)^2$. By Goursat's Lemma \cite{MR1508819} and since their orders are equal, the groups have to be isomorphic.
\end{proof}

Recall that $N=p^n M$.

\begin{lemma}
Let $\psi\in\Gal(F(N)/F(N/p))$ and $\xi\in F(N) \cap \mu_M$. Then $\psi(\xi)=\xi$.	
\end{lemma}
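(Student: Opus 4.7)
The plan is to show that $\xi$ already lies in $F(N/p)$; once this is established, the conclusion $\psi(\xi)=\xi$ is immediate, since every $\psi\in\Gal(F(N)/F(N/p))$ fixes $F(N/p)$ pointwise by definition.

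The key tool is the Weil pairing $e_M \colon E[M]\times E[M]\to\mu_M$, which is Galois-equivariant, bilinear, alternating and non-degenerate. Non-degeneracy, together with $E[M]\cong(\Z/M\Z)^2$, implies that the image of $e_M$ is all of $\mu_M$. Since $e_M(P,Q)$ is computed as a rational function in the coordinates of $P$ and $Q$, we have $e_M(P,Q)\in F(M)$ for all $P,Q\in E[M]$, and therefore $\mu_M\subset F(M)$.

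From $N=p^nM$ with $n\geq 1$ and $\gcd(M,p)=1$ one has $M\mid N/p=p^{n-1}M$, whence $E[M]\subset E[N/p]$ and so $F(M)\subset F(N/p)$. Stringing the inclusions together gives $\xi\in\mu_M\subset F(M)\subset F(N/p)$, and $\psi(\xi)=\xi$ follows.

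I do not anticipate any serious obstacle; the only subtle point is recalling that non-degeneracy of the Weil pairing forces surjectivity onto $\mu_M$. For contrast, a purely ramification-theoretic alternative is available: $F(N/p)(\xi)/F(N/p)$ is unramified, as $\xi$ has order prime to $p$, while for $n\geq 2$ the extension $F(N)/F(N/p)$ is totally ramified by Lemma \ref{Lemma3.4}(iv); hence $F(N/p)(\xi)/F(N/p)$ is simultaneously unramified and a subextension of a totally ramified extension, forcing $\xi\in F(N/p)$. This route, however, needs a separate treatment of the case $n=1$, whereas the Weil-pairing approach handles all $n\geq 1$ uniformly, which is why I would opt for it.
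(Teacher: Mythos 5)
Your main argument is correct, and it takes a genuinely different route from the paper. The paper argues via ramification: by Neukirch II.7.12 the extension $F(\xi)/F$ is unramified, hence so is $F(N/p)(\xi)/F(N/p)$, while by Lemma \ref{Lemma3.4}(iv) the extension $F(N)/F(N/p)$ is totally ramified, so the subextension $F(N/p)(\xi)/F(N/p)$ is both unramified and totally ramified, hence trivial, giving $\xi\in F(N/p)$ — this is exactly the ``alternative'' you sketch at the end, and, as you correctly observe, it relies on $n\geq 2$ (the setting in which the lemma is actually applied, namely the wildly ramified case and Lemma \ref{Lemma3.6}). Your primary route instead uses the surjectivity and Galois equivariance of the Weil pairing to get $\mu_M\subset F(M)$, and then $M\mid N/p$ to conclude $\mu_M\subset F(M)\subset F(N/p)$, so that every $\xi\in\mu_M$ (not only those in $F(N)$) is fixed by $\psi$. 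This buys uniformity in $n\geq 1$ and independence from the total-ramification statement of Lemma \ref{Lemma3.4}(iv) (and hence from the supersingularity machinery behind it), at the cost of invoking the Weil pairing; the paper's proof stays entirely inside the local ramification-theoretic toolkit it has already set up for this section. One cosmetic point: rather than saying $e_M(P,Q)$ is ``a rational function in the coordinates,'' it is cleaner to justify $e_M(P,Q)\in F(M)$ by Galois equivariance, since any $\sigma$ fixing $F(M)$ fixes all of $E[M]$ and hence fixes $e_M(P,Q)$; this is the standard argument and you already list equivariance among the pairing's properties.
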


\begin{proof}
By Proposition II 7.12 of \cite{MR1697859}, the extension $F(\xi)/F$ is unramified. 
\begin{comment}
\begin{center}
\begin{tikzcd}
& F(\xi) \arrow[dash]{dl} \arrow[dash]{dr} & \\
\Q_p(\xi) \arrow[dash]{dr}&&F \arrow[dash]{dl}\\
&\Q_p(\xi) \cap F \arrow[dash]{d}&\\
&\Q_p &	
\end{tikzcd}
\end{center}

We get that $F(\xi)/F$ is unramified.
\end{comment}

Now we want to prove that $F(\xi) \subset F(N/p)$ (and hence $\psi(\xi)=\xi$). We know that $F(\xi)/F$ is unramified, hence $F(N/p)(\xi)/F(N/p)$ is also unramified. Furthermore, by Lemma \ref{Lemma3.4} (iv), $F(N)/F(N/p)$ is totally ramified, hence as a subextension, $F(N/p)(\xi)/F(N/p)$ also has to be totally ramified. But totally ramified and unramified extensions are trivial and we get that $F(N/p)(\xi) = F(N/p)$, hence $F(\xi)\subset F(N/p)$.
\end{proof}

\begin{lemma}
\label{Lemma3.6}
Let $N=p^n M$ with $n\geq 2$. If $\psi \in \Gal(F(N)/F(N/p))$ and $\alpha\in F(N)\setminus \{0\}$ such that $\frac{\psi(\alpha)}{\alpha}\in \mu_\infty$, then
\begin{align}
\frac{\psi(\alpha)}{\alpha} \in \mu_q.
\end{align}
\end{lemma}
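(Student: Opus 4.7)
The plan is to set $\zeta := \psi(\alpha)/\alpha$. Since $F(N)/F$ is Galois and $\psi$ is an automorphism of $F(N)$, we have $\zeta \in F(N)^* \cap \mu_\infty$. I would decompose $\zeta = \zeta_p \xi$ uniquely with $\zeta_p \in \mu_{p^\infty}$ and $\xi$ of order coprime to $p$; by B\'ezout both factors are integer powers of $\zeta$ and hence lie in $F(N)$. By Lemma \ref{Lemma3.5}, $\zeta_p \in F(N)\cap\mu_{p^\infty} = \mu_{p^n}$. It then suffices to show $\xi = 1$ and that the order of $\zeta_p$ divides $p^2$.

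For the prime-to-$p$ part, I would re-run the argument of the preceding lemma with $\xi$ in place of an element of $\mu_M$; the only ingredients used are that $F(\xi)/F$ is unramified (Proposition II.7.12 of \cite{MR1697859} applies to any prime-to-$p$ root of unity, not just those of order dividing $M$) and that $F(N)/F(N/p)$ is totally ramified (Lemma \ref{Lemma3.4} (iv)). This forces $\xi \in F(N/p)$ and hence $\psi(\xi) = \xi$. Since $\Gal(F(N)/F(N/p)) \cong (\Z/p\Z)^2$ by Lemma \ref{Lemma3.4} (iv), we have $\psi^p = \id$, and evaluating $\psi^p(\alpha) = \alpha$ gives $\prod_{i=0}^{p-1} \psi^i(\zeta) = 1$. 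Splitting this identity into its $p$-power and prime-to-$p$ components and using $\psi(\xi) = \xi$ yields $\xi^p = 1$, which together with the coprimality of the order of $\xi$ to $p$ forces $\xi = 1$. Thus $\zeta = \zeta_p \in \mu_{p^n}$.

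For the $p$-part, the Weil pairing gives $\mu_{p^{n-1}} \subset \Q_p(E[p^{n-1}]) \subset F(p^{n-1}) \subset F(N/p)$, so $\psi$ fixes $\mu_{p^{n-1}}$ pointwise. Consequently $\psi$ acts on $\mu_{p^n}$ by $\zeta_p \mapsto \zeta_p^a$ for some $a = 1 + b p^{n-1}$ with $b \in \Z$, and the relation $\prod_{i=0}^{p-1} \psi^i(\zeta_p) = 1$ becomes $\zeta_p^{\,s} = 1$ with $s := \sum_{i=0}^{p-1} a^i$. A direct binomial expansion gives $a^i \equiv 1 + i b p^{n-1} \pmod{p^{2n-2}}$, so summing yields
\[
s \;\equiv\; p \,+\, \tfrac{b(p-1)}{2}\, p^n \;\equiv\; p \pmod{p^n},
\]
where the second term is killed because $p$ is odd (ensured by (P3), since $p > 2d+2 \geq 4$). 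Therefore $v_p(s) = 1$, which forces the order of $\zeta_p$ to divide $p$. Hence $\zeta = \zeta_p \in \mu_p \subset \mu_q$, as required.

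The only mildly delicate point is the sharp valuation $v_p(s) = 1$: everything hinges on the quadratic correction in the binomial expansion producing an extra power of $p$ through $\binom{p}{2}$, which requires $p$ odd. The remainder of the argument is a systematic exploitation of the unramified/totally-ramified dichotomy between the relevant subextensions of $F(N)/F$, combined with $\psi^p = \id$.
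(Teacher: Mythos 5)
Your proof is correct, and for the $p$-power part it takes a genuinely different route from the paper. The paper works with $\beta=\psi(\alpha)/\alpha$ directly: it decomposes the order of $\beta$ as $p^{n'}M'$, uses the preceding fixed-point lemma to see that $\psi$ fixes the prime-to-$p$ component, uses $\mu_{p^{n'-1}}\subset F(N/p)$ to see that $\psi$ fixes $\beta^{pM'}$, deduces via B\'ezout that $\psi$ fixes $\beta^{p}$, and then telescopes $1=\alpha^{p(\psi^{t}-1)}=\beta^{pt}$ with $t=\operatorname{ord}(\psi)\mid p$, landing in $\mu_q$. You use the same two inputs (the fixed-point lemma for the prime-to-$p$ factor — and your remark that its proof only needs the order of $\xi$ to be prime to $p$ is right, the paper itself implicitly uses this — and $\mu_{p^{n-1}}\subset F(N/p)$ via the Weil pairing), but instead of the telescoping trick you make the cyclotomic action explicit: $\psi$ acts on $\mu_{p^n}$ as $x\mapsto x^{a}$ with $a\equiv 1\bmod p^{n-1}$, and the relation $\prod_{i=0}^{p-1}\psi^{i}(\zeta)=1$ coming from $\psi^{p}=\id$ becomes $\zeta_p^{s}=1$ with $s\equiv p\pmod{p^{n}}$, the congruence being valid because $p$ is odd and $n\geq 2$. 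This costs a short binomial computation but buys a sharper conclusion, $\psi(\alpha)/\alpha\in\mu_p$ rather than $\mu_q$; the paper's argument is shorter and coarser, and the extra strength is not needed downstream, since Lemma \ref{Lemma5.3} only uses membership in $\mu_q$. Both arguments rest on Lemma \ref{Lemma3.5} and Lemma \ref{Lemma3.4} (iv), and I see no gap in yours.
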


\begin{proof}
We will follow the analogous proof of Lemma 3.6 in \cite{MR3090783} very closely and only change it where we need to use generalized results of this section.

We write $x^\psi$ for $\psi(x)$ if $x\in F(N)$, hence $\frac{\psi(\alpha)}{\alpha} = \alpha^{\psi-1}$. Let $N^\prime$ denote the order of $\beta := \alpha^{\psi-1}$ and decompose it as $N^\prime = p^{n^\prime} M^\prime$ with nonnegative $n^\prime$ and $M^\prime$ and $p$ coprime. Then $\xi := \beta^{p^{n^\prime}}$ has order $M^\prime$. By the lemma above, $\xi$ is fixed by $\psi$.

The order of $\beta^{M^\prime}$ is $p^{n^\prime}$. Hence $n^\prime \leq n$ by the above Lemma \ref{Lemma3.5}. For the same reason we have $\beta^{pM^\prime}\in F(N/p)$, hence $\psi$ fixes $\beta^{pM^\prime}$.

Let us write $1= ap^{n^\prime} + bM^\prime$ with $a$ and $b$ integers. Then $\beta=\xi^a\beta^{bM^\prime}$ and so $\psi$ fixes $\beta^p$ since it fixes $\xi$ and $\beta^{pM^\prime}$.

Let $t$ denote the order of $\psi$ as an element of $\Gal(F(N)/F(N/p))$. Then
\begin{align}
1 = \alpha^{p(\psi^t-1)} = \alpha^{p(\psi-1)(\psi^{t-1}+...+\psi+1)} = \beta^{p(\psi^{t-1}+...+\psi+1)} = \beta^{pt}.
\end{align}

By Lemma \ref{Lemma3.4} (iv) the order $t$ divides $p$ and the statement follows.
\end{proof}

\section{The tamely ramified case}

Again, we fix $E$, $L$, $K$ and $p$ as in section \ref{NotNFC}.\\

Remark that the tamely ramified case includes the unramified case. For the whole section let $p^2 \nmid N$ and $\varphi_q \in \Gal(\Q_q^{\text{unr}}/\Q_q)$ be the lift of the Frobenius squared as in \cite{MR3090783}. For $p\nmid N$ we let $\tilde{F}: = F$ and for $p\mid N$ we let $\tilde{F} := F(p)$. Recall that the extension $F/\Q_q$ is Galois. Recall that $N=p^n M$.

\begin{lemma}
\label{LemmaFrobTame}
Let $\mathcal{E}$ be a multiple of $[F:\Q_q](q-1)$. We have
\begin{itemize}
\item[(i)] $\varphi_q^{\mathcal{E}} |_{\tilde{F}\cap \Q_q^{\text{unr}}} = \id$.
\item[(ii)] There exists $\tilde{\varphi}$ in $\Gal(\tilde{F}(M)/\tilde{F})$ such that the restriction $\tilde{\varphi}|_{(\tilde{F} \cap \Q_q^{\text{unr}})(M)}$ coincides with the restriction $\varphi_q^\mathcal{E}|_{(\tilde{F} \cap \Q_q^{\text{unr}})(M)}$.
\item[(iii)] For $\tilde{\varphi}$ from (ii) we have that $\tilde{\varphi}|_{K(N)}$ lies in the center of $\Gal(K(N)/\Q)$.
\item[(iv)] The extension $\tilde{F} (M)/(\tilde{F} \cap \Q_q^{\text{unr}})(M)$ is totally ramified.
\item[(v)] The ramification index of $\tilde{F}(M)/\Q_q$ is at most $(q-1)[F:\Q_q] \leq \mathcal{E}$.
\end{itemize}
\end{lemma}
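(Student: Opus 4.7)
The five parts can be handled mostly independently; (iii) is the main conceptual point. For (i), the field $\tilde F\cap\Q_q^{\text{unr}}$ is the maximal unramified subextension of $\tilde F/\Q_q$, hence is unramified of degree dividing $[\tilde F:\Q_q]$. In the case $\tilde F=F$ this degree is at most $[F:\Q_q]$; in the case $\tilde F=F(p)$, Lemma \ref{Lemma3.4}(vi) yields $[F(p):F]\mid q-1$, so $[\tilde F:\Q_q]$ divides $(q-1)[F:\Q_q]$. Either way the degree divides $\mathcal E$, and since $\varphi_q$ topologically generates the cyclic group $\Gal(\Q_q^{\text{unr}}/\Q_q)$, the power $\varphi_q^{\mathcal E}$ acts trivially on this subfield.

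For (ii), I would use that $\Q_q(M)/\Q_q$ is unramified ($E$ has good reduction and $\gcd(M,p)=1$), so $(\tilde F\cap\Q_q^{\text{unr}})(M)\subset\Q_q^{\text{unr}}$. This forces $\tilde F\cap(\tilde F\cap\Q_q^{\text{unr}})(M)=\tilde F\cap\Q_q^{\text{unr}}$, and combined with $\tilde F\cdot(\tilde F\cap\Q_q^{\text{unr}})(M)=\tilde F(M)$ the standard Galois comparison makes the restriction $\Gal(\tilde F(M)/\tilde F)\to\Gal((\tilde F\cap\Q_q^{\text{unr}})(M)/(\tilde F\cap\Q_q^{\text{unr}}))$ an isomorphism. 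The element $\varphi_q^{\mathcal E}|_{(\tilde F\cap\Q_q^{\text{unr}})(M)}$ lies in the target by (i) and pulls back to the desired $\tilde\varphi$.

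For (iii), the plan is to embed $\Gal(K(N)/\Q)$ into $\Gal(K/\Q)\times\Aut(E[N])$ via $\sigma\mapsto(\sigma|_K,\sigma|_{E[N]})$; injectivity follows from $K(N)=K\cdot\Q(E[N])$. Commutation is then reduced to commutation in the product, so it suffices to check that the image of $\tilde\varphi|_{K(N)}$ is central. The first coordinate is trivial since $\tilde\varphi$ fixes $\tilde F\supset K$. For the second coordinate, on $E[p^n]$ with $n\leq 1$ the automorphism $\tilde\varphi$ fixes $\tilde F$ and thus acts as the identity, while on $E[M]$ it acts via $\varphi_q^{\mathcal E}$, that is, as $\pi_p^{2\mathcal E}$ where $\pi_p$ denotes the Frobenius of the reduction. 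Using (P1), (P3) (which ensures $p\geq 5$) and the Weil bound, the trace of $\pi_p$ vanishes, so $\pi_p^2=[-p]$ and $\tilde\varphi$ acts on $E[M]$ as the scalar $(-p)^{\mathcal E}$. Via the decomposition $E[N]=E[p^n]\oplus E[M]$, $\tilde\varphi$ then acts on $E[N]$ as a scalar of $\GL_2(\Z/N\Z)$, which is central.

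Finally, (iv) follows from the standard fact that a totally ramified extension remains totally ramified after unramified base change: write $\tilde F(M)=\tilde F\cdot(\tilde F\cap\Q_q^{\text{unr}})(M)$ and apply this to $\tilde F/(\tilde F\cap\Q_q^{\text{unr}})$ (totally ramified, as the latter is the maximal unramified subextension) and the unramified extension $(\tilde F\cap\Q_q^{\text{unr}})(M)/(\tilde F\cap\Q_q^{\text{unr}})$. For (v), $\tilde F(M)/\tilde F$ is unramified by Lemma \ref{Lemma3.4}(ii), so the ramification index of $\tilde F(M)/\Q_q$ equals that of $\tilde F/\Q_q$, which is bounded by $[\tilde F:\Q_q]\leq(q-1)[F:\Q_q]\leq\mathcal E$. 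The main obstacle is the arithmetic input in (iii): confirming that $\tilde\varphi$ acts as a scalar on $E[M]$ uses supersingularity together with the Weil bound to force the Frobenius trace to vanish and hence $\pi_p^2=[-p]$.
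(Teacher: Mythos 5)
Your proposal is correct, and for parts (i), (ii) and (iv) it is essentially the argument of the paper: bound $[\tilde F\cap\Q_q^{\text{unr}}:\Q_q]$ by a divisor of $(q-1)[F:\Q_q]\mid\mathcal{E}$, use unramifiedness of $\Q_q(M)/\Q_q$ to get $(\tilde F\cap\Q_q^{\text{unr}})(M)\cap\tilde F=\tilde F\cap\Q_q^{\text{unr}}$ and pull $\varphi_q^{\mathcal E}$ back through the restriction isomorphism, and base-change the totally ramified $\tilde F/(\tilde F\cap\Q_q^{\text{unr}})$ along the unramified extension. The two places where you genuinely deviate are (iii) and (v). For (iii) the paper splits into the cases $p\mid N$ and $p\nmid N$, embeds $\Gal(K(N)/\Q)$ into $\Gal(K(p)/\Q)\times\Gal(\Q(M)/\Q)$ (resp.\ $\Gal(K/\Q)\times\Gal(\Q(M)/\Q)$), and delegates centrality of $\tilde\varphi$ in $\Gal(\Q(M)/\Q)$ to the proof of Lemma 5.1 of Habegger; you instead use the single embedding $\sigma\mapsto(\sigma|_K,\sigma|_{E[N]})$ into $\Gal(K/\Q)\times\Aut(E[N])$ and re-derive the arithmetic input yourself (supersingularity, $p\geq 5$ and the Weil bound give $a_p=0$, hence $\pi_p^2=[-p]$, so $\varphi_q^{\mathcal E}$ acts on $E[M]$ as the scalar $(-p)^{\mathcal E}$, and via CRT $\tilde\varphi$ acts on $E[N]$ as a scalar matrix). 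This treats both cases uniformly and makes the lemma self-contained, at the cost of reproving what the paper imports by citation; note your identity action on $E[p^n]$ (coordinates lie in $\tilde F$, which $\tilde\varphi$ fixes) is exactly the role $K(p)$ plays in the paper's case distinction. For (v) you argue via unramifiedness of $\tilde F(M)/\tilde F$ (Lemma \ref{Lemma3.4}(ii)) so that $e(\tilde F(M)/\Q_q)=e(\tilde F/\Q_q)\leq[\tilde F:\Q_q]$, whereas the paper bounds the ramification through the tower over $\Q_q(M)$; both are valid and give the same bound $(q-1)[F:\Q_q]\leq\mathcal E$.
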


\begin{proof}
(i) We have that $[F:\Q_p]$ is a multiple of $|\Gal(\cO_F/\P / \cO_{\Q_q} / \p)|$ where $\P$ and $\p$ are the maximal ideals of $\cO_F$ and $\cO_{\Q_q}$, respectively. By Lemma \ref{Lemma3.4} (vi) we have that $\Gal(F(p)/F) \subset \Z/(q-1)\Z$ hence in the case of $p|N$ we have that $[\tilde{F}:\Q_q]$ is a divisor of $[F:\Q_q](q-1)$ which divides $\mathcal{E}$ and whenever $p\nmid N$, we still have that $[\tilde{F}:\Q_q] |$ divides $\mathcal{E}$. So $\mathcal{E}$ is always a multiple of the local degree $[\tilde{F}:\Q_q]$. After restriction $\varphi_q|_{\tilde{F}\cap\Q_q^{\text{unr}}}$ is an element of the Galois group $\Gal(\tilde{F}\cap \Q_q^{\text{unr}}/\Q_q)$. But the order of this group is a divisor of $\mathcal{E}$ since $\Gal(\tilde{F}\cap \Q_q^{\text{unr}} / \Q_q)$ is a quotient of $\Gal(\tilde{F}/\Q_q)$ which has order dividing $(q-1)[F:\Q_q]$. Hence, the $\mathcal{E}$-th power of $\varphi_q$ has to be the identity on $\tilde{F} \cap \Q_q^{\text{unr}}$.\\

(ii) First, we want to show that $(\tilde{F} \cap \Q_q^\text{unr}) (M) \cap \tilde{F} = \tilde{F}\cap \Q_q^\text{unr}$. The inclusion $(\tilde{F} \cap \Q_q^\text{unr}) (M) \cap \tilde{F} \supset \tilde{F}\cap \Q_q^\text{unr}$ is obvious and we have to prove "$\subset$". By Lemma 3.1 of \cite{MR3090783}, the extension $\Q_q(M)/\Q_q$ is unramified, hence $\Q_q^{\text{unr}} (M) = \Q_q^{\text{unr}}$. We have $(\tilde{F} \cap \Q_q^{\text{unr}})(M) \cap \tilde{F} \subset \Q_q^{\text{unr}} (M) \cap \tilde{F} = \Q_q^{\text{unr}} \cap \tilde{F}$.

\begin{comment}
Now for any $x\in (\tilde{F} \cap \Q_q^\text{unr}) (M) \cap \tilde{F} $ we have $x\in \tilde{F}$ and $x\in (\tilde{F} \cap \Q_q^\text{unr})(M)$ which yields that also $x\in \Q_q^{\text{unr}} (M) = \Q_q^{\text{unr}}$. Hence $x\in \tilde{F}$ and $x\in \Q_q^\text{unr}$ and we get the desired statement.
\end{comment}

We consider the following diagram:
\begin{center}
\begin{tikzcd}
\Q_q^\text{unr} \arrow[dash]{dr}&&\tilde{F}(M) \arrow[dash]{dl} \arrow[dash]{dr}&&\\
 &(\tilde{F}\cap \Q_q^\text{unr})(M) \arrow[dash]{dl} \arrow[dash]{dr} & & \tilde{F} \arrow[dash]{dl}\\
 \Q_q(M) \arrow[dash]{dr} &&(\tilde{F} \cap \Q_q^{\text{unr}}) (M) \cap \tilde{F} = \tilde{F}\cap\Q_q^\text{unr} \arrow[dash]{dl}&\\
 &\Q_q(M)\cap (\tilde{F}\cap\Q_q^\text{unr}) \arrow[dash]{d}&&\\
 &\Q_q &&
\end{tikzcd}
\end{center}

Recall that $\mathcal{E}$ is a multiple of $(q-1)[F:\Q_p]$ and by (i) $\varphi_q^{\mathcal{E}} |_{\tilde{F}\cap \Q_q^\text{unr}}$ is trivial. Hence $\varphi_q^{\mathcal{E}} \in \Gal(\Q_q^\text{unr} / \tilde{F} \cap \Q_q^\text{unr})$. By the diagram, the Galois group $\Gal( (\tilde{F}\cap\Q_q^{\text{unr}})(M)/\tilde{F}\cap \Q_q^\text{unr})$ is isomorphic to $\Gal(\tilde{F}(M)/\tilde{F})$ and we call $\tilde{\varphi}$ the image of $\varphi_q |_{\tilde{F}\cap \Q_q^{\text{unr}}}$ under that isomorphism. Note that in the case of $p\mid N$, $\tilde{\varphi}$ acts trivially on $\tilde{F} = F(p) \supset K(p)$. In the case of $p\nmid N$, $\tilde{\varphi}$ acts trivially on $F \supset K$.\\

(iii) Recall that $K\subset F$ and hence by the above paragraph $\tilde{\varphi}$ acts trivially on $K$ and $K(p)$, in the cases $p\nmid N$ and $p\mid N$ respectively. We will distinguish the two cases $p\nmid N$ and $p|N$.\\

For $p|N$ we already remarked that $\tilde{\varphi}|_{K(p)}$ is the identity and we now want to show that $\tilde{\varphi} |_{K(N)}$ lies in the center of $\Gal(K(N)/\Q)$. Consider the following diagram

\begin{center}
\begin{tikzcd}
& K(N) \arrow[dash]{dr} \arrow[dash]{dl} & \\
K(p) \arrow[dash]{dr} && \arrow[dash]{dl} \Q(M) \\
& \arrow[dash]{d} K(p) \cap \Q (M) \\
&\Q &
\end{tikzcd}
\end{center}

which shows that $\Gal(K(N)/\Q)$ is by restriction isomorphic to a subgroup of $\Gal(\Q(M)/\Q) \times \Gal(K(p)/\Q)$. Now the proof of Lemma 5.1 of \cite{MR3090783} shows that $\tilde{\varphi}$ lies in the center of $\Gal(\Q(M)/\Q)$. Together with $\tilde{\varphi}$ acting trivially on $K(p)$, we get that it lies in the center of $\Gal(K(N)/\Q)$.\\

Now let $p\nmid N$. We do the same as above, considering $K$ instead of $K(p)$. Consider the diagram:

\begin{center}
\begin{tikzcd}
& K(M) \arrow[dash]{dr} \arrow[dash]{dl} & \\
K \arrow[dash]{dr} && \arrow[dash]{dl} \Q(M) \\
& \arrow[dash]{d} K \cap \Q (M) \\
&\Q &
\end{tikzcd}
\end{center}

And again: $\Gal(K(M)/\Q)$ is by restriction isomorphic to a subgroup of $\Gal(\Q(M)/\Q) \times \Gal(K/\Q)$ and since $\tilde{\varphi}$ acts trivially on $K$ and lies in the center of $\Gal(\Q(M)/\Q)$, it also lies in the center of $\Gal(K(M)/\Q)$. \\

(iv) We will use Lemma 2.1 (ii) of \cite{MR3090783} again. Since $\tilde{F}/\tilde{F}\cap \Q_p^{\text{unr}}$ is totally ramified and $(\tilde{F}\cap \Q_p^{\text{unr}})(M)/\tilde{F} \cap \Q_p^{\text{unr}}$ is unramified, the extension $\tilde{F}(M)/(\tilde{F}\cap \Q_p^{\text{unr}})(M)$ is also totally ramified.\\

(v) We consider the following diagram

\begin{center}
\makebox[0pt]{
\begin{xy}\xymatrix{
&\tilde{F}(M) \ar@{-}[dl] \ar@{-}[dr]&\\
\tilde{F} \ar@{-}[dr]& &\Q_q (M) \ar@{-}[dl]\\
& \tilde{F} \cap \Q_q (M) \ar@{-}[d] & \\
&\Q_q &
}
\end{xy}
}
\end{center}
Since the extension $\Q_q(M)/\Q_q$ is unramified (Chapter VII \cite{MR2514094}), the only contribution to the ramification degree of $\tilde{F} / \Q_q$ can come from the extension $\tilde{F}(M)/\Q_q (M)$. Since the Galois group of the said extension is a subgroup of $\Gal (\tilde{F}/\Q_q)$, it has degree at most $(q-1)[F:\Q_q]$, hence also the ramification degree cannot be larger.
\end{proof}

Recall that since we view $F$ as a subfield of $\overline{\Q_p}$, we can consider $|\alpha|_p$ for $\alpha\in F$.

\begin{lemma}
\label{Restklassenk}
Let $L/K$ be a totally ramified extension of fields with $K \subset L \subset \overline{\Q_p}$ and $[L:\Q_p]$, $[K:\Q_p]$ finite. Then for every $\alpha\in \cO_L$ there exists $\beta\in \cO_K$ such that $|\alpha - \beta|_p < 1$.
\end{lemma}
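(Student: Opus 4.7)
The plan is to exploit the fact that in a totally ramified extension of local fields the residue fields coincide, so every residue class in $\cO_L/\mathfrak{m}_L$ already has a representative in $\cO_K$.

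More concretely, since $K$ and $L$ are finite extensions of $\Q_p$ contained in $\overline{\Q_p}$, they are local fields with rings of integers $\cO_K$, $\cO_L$ and maximal ideals $\mathfrak{m}_K$, $\mathfrak{m}_L$. Writing $e$ and $f$ for the ramification index and the residue degree of $L/K$, the hypothesis that $L/K$ is totally ramified means $f = 1$, i.e.\ the natural inclusion $\cO_K/\mathfrak{m}_K \hookrightarrow \cO_L/\mathfrak{m}_L$ is an isomorphism of (finite) residue fields.

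So, given $\alpha \in \cO_L$, I would consider its residue class $\bar\alpha \in \cO_L/\mathfrak{m}_L$. By the displayed isomorphism there exists $\beta \in \cO_K$ whose image in $\cO_K/\mathfrak{m}_K$ maps to $\bar\alpha$ under the identification; equivalently, $\alpha - \beta \in \mathfrak{m}_L$. Since $\mathfrak{m}_L$ is exactly the set of elements of $\cO_L$ with $|\cdot|_p < 1$, this gives $|\alpha-\beta|_p < 1$, which is the desired conclusion.

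The only point that requires any thought is the identification of the residue fields, and this is the standard fact that in a totally ramified finite extension of complete discretely valued fields the residue field extension is trivial. There is no real obstacle here; this lemma is essentially a restatement of "totally ramified $\Rightarrow$ residue field unchanged", packaged in terms of the $p$-adic absolute value.
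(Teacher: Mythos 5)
Your proof is correct and is essentially identical to the paper's own argument: both use that total ramification forces the residue field of $L$ to equal that of $K$, pick $\beta\in\cO_K$ in the residue class of $\alpha$, and conclude $\alpha-\beta\in\mathfrak{m}_L$, i.e.\ $|\alpha-\beta|_p<1$. No issues.
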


\begin{proof}
Since the field extension is totally ramified, the residue fields are equal. Consider $\alpha$ as an element in the residue field of $L$. Take any $\beta \in \cO_K$ in the same residue class as $\alpha$. Then, as $\alpha$ and $\beta$ are in the same residue class, their difference $\alpha - \beta$ is zero in the residue field. This means $\alpha - \beta$ is an element of the maximal ideal, hence $|\alpha-\beta|_p$ has to be smaller than one.
\end{proof}

\begin{lemma}
Let $\alpha \in \tilde{F} (M)^*$ with $|\alpha|_p \leq 1$. Then for $\tilde{\varphi}$ and $\mathcal{E}$ as in Lemma \ref{LemmaFrobTame} we have
\begin{align*}
| \tilde{\varphi}(\alpha) - \alpha^{q^\mathcal{E}}|_p \leq p^{-\frac1{\mathcal{E}}}.
\end{align*}
\end{lemma}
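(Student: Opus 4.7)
My plan is to split the difference $\tilde{\varphi}(\alpha) - \alpha^{q^{\mathcal{E}}}$ by inserting an intermediate element $\beta$ from the maximal unramified-over-$\Q_q$ subfield of $\tilde{F}(M)$, so that on $\beta$ the map $\tilde{\varphi}$ actually equals the Frobenius lift and the classical congruence $\varphi_q^{\mathcal{E}}(\beta)\equiv \beta^{q^{\mathcal{E}}}\pmod p$ is available, while the error from replacing $\alpha$ by $\beta$ is controlled by the ramification estimate of Lemma \ref{LemmaFrobTame} (v).

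First I would invoke Lemma \ref{LemmaFrobTame} (iv), which says that $\tilde{F}(M)/(\tilde{F}\cap \Q_q^{\mathrm{unr}})(M)$ is totally ramified, and apply Lemma \ref{Restklassenk} to this finite extension to obtain $\beta\in \cO_{(\tilde{F}\cap \Q_q^{\mathrm{unr}})(M)}$ with $|\alpha-\beta|_p<1$. Since $\Q_q/\Q_p$ is unramified, the ramification index of $\tilde{F}(M)/\Q_p$ equals that of $\tilde{F}(M)/\Q_q$, which is at most $\mathcal{E}$ by Lemma \ref{LemmaFrobTame} (v). Consequently the value group of $|\cdot|_p$ on $\tilde{F}(M)$ lies inside $p^{\mathcal{E}^{-1}\Z}$, so the strict inequality $|\alpha-\beta|_p<1$ upgrades to $|\alpha-\beta|_p\leq p^{-1/\mathcal{E}}$.

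Next I would telescope
\[
\tilde{\varphi}(\alpha)-\alpha^{q^{\mathcal{E}}} \;=\; \tilde{\varphi}(\alpha-\beta) \;+\; \bigl(\tilde{\varphi}(\beta)-\beta^{q^{\mathcal{E}}}\bigr) \;+\; \bigl(\beta^{q^{\mathcal{E}}}-\alpha^{q^{\mathcal{E}}}\bigr)
\]
and estimate each summand. The first has $p$-adic norm $|\alpha-\beta|_p\leq p^{-1/\mathcal{E}}$ because Galois automorphisms are $p$-adic isometries. The third factors as $(\beta-\alpha)\sum_{k=0}^{q^{\mathcal{E}}-1}\beta^{k}\alpha^{q^{\mathcal{E}}-1-k}$; since $|\alpha|_p,|\beta|_p\leq 1$ the sum has norm at most $1$, so this term is also bounded by $|\alpha-\beta|_p\leq p^{-1/\mathcal{E}}$. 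For the middle term, Lemma \ref{LemmaFrobTame} (ii) gives $\tilde{\varphi}(\beta)=\varphi_q^{\mathcal{E}}(\beta)$ (using that $(\tilde{F}\cap\Q_q^{\mathrm{unr}})(M)\subset \Q_q^{\mathrm{unr}}$, as $\Q_q(M)/\Q_q$ is unramified), and a short induction on $\mathcal{E}$, starting from $\varphi_q(x)\equiv x^q\pmod p$ for $x\in\cO_{\Q_q^{\mathrm{unr}}}$, yields $\varphi_q^{\mathcal{E}}(\beta)\equiv \beta^{q^{\mathcal{E}}}\pmod p$, hence norm at most $p^{-1}\leq p^{-1/\mathcal{E}}$. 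Combining the three estimates via the strong triangle inequality gives the claim.

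The only genuinely delicate point is the promotion of $|\alpha-\beta|_p<1$ into the quantitative bound $p^{-1/\mathcal{E}}$; that is where the ramification index bound of Lemma \ref{LemmaFrobTame} (v) and the fact that $\Q_q/\Q_p$ is unramified are essential. Everything else is either a direct citation or a standard Frobenius-lift induction.
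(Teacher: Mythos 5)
Your proposal is correct and follows essentially the same route as the paper: approximate $\alpha$ by $\beta$ in $(\tilde{F}\cap\Q_q^{\text{unr}})(M)$ via Lemma \ref{Restklassenk} and Lemma \ref{LemmaFrobTame} (iv), telescope into the three differences, bound each one, and convert the strict inequality $<1$ into $\leq p^{-\frac{1}{\mathcal{E}}}$ using discreteness of the valuation together with the ramification bound of Lemma \ref{LemmaFrobTame} (v). The only cosmetic differences are that you apply the discreteness upgrade to $|\alpha-\beta|_p$ up front rather than to the combined estimate at the end, and you spell out the induction $\varphi_q^{\mathcal{E}}(\beta)\equiv\beta^{q^{\mathcal{E}}} \pmod p$, which the paper leaves implicit.
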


\begin{proof}
Let $\alpha \in \tilde{F} (M)$ with $|\alpha|_p \leq 1$. Then by Lemma \ref{Restklassenk} and \ref{LemmaFrobTame} (iv) we find $\beta\in (\tilde{F}\cap\Q_q^{\text{unr}})(M)$ with $|\beta|_p \leq 1$ and $|\alpha - \beta |_p < 1$. Now $| \tilde{\varphi}(\alpha)-\tilde{\varphi}(\beta)|_p = |\alpha - \beta|_p$ since Galois automorphisms do not change the valuation. Furthermore, we have
\begin{align*}
(\alpha^{q^\mathcal{E}} - \beta^{q^\mathcal{E}}) &= (\alpha - \beta)(\alpha^{{q^\mathcal{E}}-1} + \alpha^{{q^{\mathcal{E}}}-2}\beta + ... + \alpha\beta^{{q^{\mathcal{E}}}-2} + \alpha^{{q^{\mathcal{E}}}-1})\\
\end{align*}
hence
\begin{align*}
|\alpha^{q^{\mathcal{E}}} - \beta^{q^{\mathcal{E}}}|_p &= |\alpha - \beta|_p |\alpha^{{q^\mathcal{E}}-1} + \alpha^{{q^\mathcal{E}}-2}\beta + ... + \alpha\beta^{{q^\mathcal{E}}-2} + \alpha^{{q^\mathcal{E}}-1}|_p\\
&\leq |\alpha - \beta|_p \max(|\alpha^{{q^\mathcal{E}}-1}|_p,|\alpha^{{q^\mathcal{E}}-2}\beta|_p, ..., |\alpha\beta^{{q^\mathcal{E}}-2}|_p, |\alpha^{{q^\mathcal{E}}-1}|_p)\\
&\leq |\alpha - \beta|_p \\
& < 1.
\end{align*}

Now consider $| \tilde{\varphi}(\beta) - \beta^{q^\mathcal{E}}|_p$. Since $\beta \in (\tilde{F}\cap\Q_q^{\text{unr}})(M)$, we can apply Lemma \ref{LemmaFrobTame} (ii) and get that $\tilde{\varphi}$ acts as $\tilde{\varphi}(\beta)$ is equal to $\beta^{q^\mathcal{E}}$ in the residue field. Again, as in the proof of the above lemma, this means that their difference is an element of the maximal ideal in $(\tilde{F}\cap\Q_q^{\text{unr}})(M)$, which means that $| \tilde{\varphi}(\beta) - \beta^{q^\mathcal{E}}|_p < 1$.

So we have
\begin{align*}
| \tilde{\varphi}(\alpha) - \alpha^{q^\mathcal{E}}|_p & = | \tilde{\varphi}(\alpha) - \tilde{\varphi}(\beta)+ \tilde{\varphi}(\beta)- \beta^{q^\mathcal{E}}+\beta^{q^\mathcal{E}}- \alpha^{q^\mathcal{E}}|_p \\
& \leq \max(| \tilde{\varphi}(\alpha) - \tilde{\varphi}(\beta)|_p, | \tilde{\varphi}(\beta)- \beta^{q^\mathcal{E}}|_p, |\beta^{q^\mathcal{E}} - \alpha^{q^\mathcal{E}}|_p)\\
&= \max(|\alpha-\beta|_p, | \tilde{\varphi}(\beta)- \beta^{q^\mathcal{E}}|_p)\\
&< 1.
\end{align*}
Since the valuation is discrete and we bounded the ramification degree in Lemma \ref{LemmaFrobTame} (v), it has to be at most $p^{-\frac1{\mathcal{E}}}$ which proves the statement.
\end{proof}

We recall a result of an earlier paper of the author.

\begin{lemma}[\cite{2017arXiv171204214F}]
\label{sumexpl}
Let $\delta < \frac12$ and let $\beta \in \overline{\Q^*} \setminus \mu_\infty$ be such that 
$[\Q(\beta):\Q] \geq 16$ and $h(\beta)^\frac12 \leq \frac12$. Then we have
\begin{align}
\frac{1}{[\Q(\beta):\Q]} \sum_{\tau : \Q (\beta) \hookrightarrow \C} \log |\tau (\beta) -1 | \leq \frac{40}{\delta^4} h(\beta)^{\frac12-\delta}.\label{boundthesuminexpl}
\end{align}
\end{lemma}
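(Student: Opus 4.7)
The plan is to exploit that an algebraic number $\beta$ of small Weil height has Galois conjugates which are close to the unit circle and whose angular distribution is close to uniform, by Bilu's equidistribution theorem. Combined with the vanishing of
\[
\int_0^{2\pi}\log\bigl|e^{i\theta}-1\bigr|\,\frac{d\theta}{2\pi}=0,
\]
this forces the average $\frac{1}{[\Q(\beta):\Q]}\sum_\tau \log|\tau(\beta)-1|$ to be small in a quantitative way depending on $h(\beta)$.

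Concretely, I would first split the embeddings according to the distance of $|\tau(\beta)|$ from $1$. From the defining sum formula for the Weil height, a Markov-type argument shows that the set of $\tau$ with $\bigl|\log|\tau(\beta)|\bigr|>h(\beta)^{1/2}$ has size at most $O([\Q(\beta):\Q]\,h(\beta)^{1/2})$. On this small set the inequality $\log|\tau(\beta)-1|\leq\log(1+|\tau(\beta)|)$ bounds the total contribution to the left-hand side by $O(h(\beta)^{1/2})$. For the remaining conjugates, parametrised as $\tau(\beta)=(1+r_\tau)e^{i\theta_\tau}$ with $|r_\tau|\leq h(\beta)^{1/2}$, one has
\[
\log|\tau(\beta)-1|=\log\bigl|e^{i\theta_\tau}-1\bigr|+O\!\left(\frac{h(\beta)^{1/2}}{|e^{i\theta_\tau}-1|}\right),
\]
reducing the problem to bounding $\sum_\tau \log|e^{i\theta_\tau}-1|$.

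To estimate this I would introduce a truncation parameter $\eta=h(\beta)^{\delta}$, separating the $\tau$ with $|e^{i\theta_\tau}-1|\geq\eta$ from the rest. On the complement the trivial inequality $\log|e^{i\theta_\tau}-1|\leq 0$ is harmless for an upper bound. On the main set the test function has bounded variation of order $|\log\eta|/\eta$, so an Erd\H{o}s--Tur\'an--Koksma discrepancy bound (equivalently a quantitative form of Bilu's theorem as in Petsche or Favre--Rivera-Letelier) controls the difference between the empirical average and the vanishing integral. The discrepancy of the angles $\theta_\tau$ is itself bounded by a power of $h(\beta)$, and optimising $\eta$ while tracking the powers of $\delta$ produced by the Erd\H{o}s--Tur\'an constant yields the claimed bound $\frac{40}{\delta^4}h(\beta)^{1/2-\delta}$.

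The main obstacle is the logarithmic singularity of $\log|z-1|$ at $z=1$, which forbids a direct application of an equidistribution theorem to this particular test function. The regularisation at scale $h(\beta)^\delta$ is exactly what introduces the loss of exponent $\delta$ and the explicit factor $\delta^{-4}$. The hypotheses $[\Q(\beta):\Q]\geq 16$ and $h(\beta)^{1/2}\leq 1/2$ serve to absorb the constants coming from the quantitative Bilu input and the estimate $h(\beta-1)\leq h(\beta)+\log 2$ into the clean constant $40$.
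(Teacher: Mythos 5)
The paper itself contains no proof of Lemma \ref{sumexpl}: it is imported from \cite{2017arXiv171204214F}, so your sketch must stand on its own, and the general plan you describe --- regularise $\log|z-1|$ near its singularity and feed the regularised function into a quantitative archimedean equidistribution statement --- is indeed the natural route to a bound of this shape. Nevertheless, as written the sketch has a genuine gap at its quantitative core. The input you rely on, namely that ``the discrepancy of the angles $\theta_\tau$ is bounded by a power of $h(\beta)$'', is not what the results you invoke provide: the quantitative forms of Bilu's theorem (Petsche, Favre--Rivera-Letelier) and Erd\H{o}s--Tur\'an/Koksma-type discrepancy estimates all carry, besides a term of order $h(\beta)^{1/2}$, a degree-dependent term of the shape $\sqrt{\log D/D}$ or $D^{-1/2}$ with $D=[\Q(\beta):\Q]$. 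The right-hand side of \eqref{boundthesuminexpl} contains no degree, and the hypothesis $D\ge 16$ cannot absorb such a term: for $D=16$ it is a numerical constant of size about $0.4$, whereas $\frac{40}{\delta^4}h(\beta)^{\frac12-\delta}$ can be arbitrarily small. Eliminating the degree requires an extra arithmetic ingredient, an explicit Dobrowolski-type lower bound (e.g.\ Voutier's) converting $1/D$ into a power of $h(\beta)$ up to logarithms; nothing in your sketch supplies this. Note also that the classical Erd\H{o}s--Tur\'an inequality in terms of the $\ell^{1}$-norm of the minimal polynomial is useless here, since that norm may be of size $2^{D}$; one needs a Mahler-measure refinement, which again is degree-sensitive.

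Even granting a discrepancy bound of order $h(\beta)^{1/2}$, your calibration does not give the stated exponent. Truncating at scale $\eta$ injects a positive systematic term: the integral of your truncated test function over the unit circle is not the ``vanishing integral'' but roughly $\eta\log(1/\eta)$ (the negative mass you discard near $z=1$), so with $\eta=h(\beta)^{\delta}$ and $\delta<\frac14$ this single term is of size $h(\beta)^{\delta}$, which exceeds the target $h(\beta)^{\frac12-\delta}$; likewise the conjugates in the arc $|e^{i\theta_\tau}-1|<\eta$ are only ``harmless'' when $\eta$ is bounded away from $1$. The correct truncation scale is of order $h(\beta)^{1/2}$, with the loss $\delta$ in the exponent coming from estimating the resulting $\log(1/h(\beta))$ factors by $h(\beta)^{-\delta}/(e\delta)$, not from truncating at $h(\beta)^{\delta}$. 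Moreover the truncated function must enter through its total variation, which is $\approx\log(2/\eta)$, via Koksma's inequality; the Lipschitz-type quantity $|\log\eta|/\eta$ you quote leads, after optimisation against the $\eta$-cost above, only to bounds of quality $h(\beta)^{1/4}$. Finally, the explicit constant $\frac{40}{\delta^{4}}$ is precisely the content of the lemma and is asserted rather than derived; as it stands the proposal does not establish the statement.
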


\begin{lemma}
\label{Lemma4.1NFC}
Let $\alpha \in \tilde{F} (M)^*$. Then for $\tilde{\varphi}$ as in Lemma \ref{LemmaFrobTame} we have
\begin{align*}
| \tilde{\varphi} (\alpha) - \alpha^{q^\mathcal{E}}|_p \leq p^{-\frac1{\mathcal{E}}} \max (1, | \tilde{\varphi}(\alpha)|_p)  \max (1, |\alpha|_p)^{q^\mathcal{E}} .
\end{align*}
\end{lemma}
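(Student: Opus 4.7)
The plan is to bootstrap from the previous lemma, which already handled the case $|\alpha|_p \leq 1$, to arbitrary $\alpha \in \tilde{F}(M)^\ast$. I will split into two cases according to the size of $|\alpha|_p$, using the key observation that $\tilde{\varphi}$, being a Galois automorphism of a finite extension of $\Q_p$, preserves the $p$-adic absolute value; in particular $|\tilde{\varphi}(\alpha)|_p = |\alpha|_p$.

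If $|\alpha|_p \leq 1$, then $|\tilde{\varphi}(\alpha)|_p \leq 1$ as well, so both $\max(1, |\tilde{\varphi}(\alpha)|_p) = 1$ and $\max(1, |\alpha|_p)^{q^{\mathcal{E}}} = 1$, and the claim reduces to the previous lemma.

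If $|\alpha|_p > 1$, I will instead apply the previous lemma to $\alpha^{-1} \in \tilde{F}(M)^\ast$, which now satisfies $|\alpha^{-1}|_p < 1$. This yields
\[
|\tilde{\varphi}(\alpha)^{-1} - \alpha^{-q^{\mathcal{E}}}|_p = |\tilde{\varphi}(\alpha^{-1}) - \alpha^{-q^{\mathcal{E}}}|_p \leq p^{-1/\mathcal{E}}.
\]
Multiplying this inequality by $|\tilde{\varphi}(\alpha)|_p \cdot |\alpha|_p^{q^{\mathcal{E}}}$ and using the multiplicativity of $|\cdot|_p$ turns the left-hand side into $|\tilde{\varphi}(\alpha) - \alpha^{q^{\mathcal{E}}}|_p$, while the right-hand side becomes $p^{-1/\mathcal{E}} |\tilde{\varphi}(\alpha)|_p \,|\alpha|_p^{q^{\mathcal{E}}}$. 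In this case $|\tilde{\varphi}(\alpha)|_p = |\alpha|_p > 1$, so $|\tilde{\varphi}(\alpha)|_p = \max(1,|\tilde{\varphi}(\alpha)|_p)$ and $|\alpha|_p^{q^{\mathcal{E}}} = \max(1,|\alpha|_p)^{q^{\mathcal{E}}}$, giving precisely the inequality stated in the lemma.

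There is no real obstacle here; essentially all the work was done in the previous lemma (the totally ramified descent to $(\tilde{F} \cap \Q_q^{\mathrm{unr}})(M)$ and the bound on the ramification index from Lemma \ref{LemmaFrobTame}(v)), and the present statement is just the $|\alpha|_p > 1$ case obtained by inverting, packaged uniformly via the two factors $\max(1,|\tilde{\varphi}(\alpha)|_p)$ and $\max(1,|\alpha|_p)^{q^{\mathcal{E}}}$.
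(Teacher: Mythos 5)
Your proposal is correct and follows essentially the same route as the paper: the case $|\alpha|_p \leq 1$ is the previous lemma, and for $|\alpha|_p > 1$ one applies that lemma to $\alpha^{-1}$ and multiplies through by $|\tilde{\varphi}(\alpha)|_p\,|\alpha|_p^{q^{\mathcal{E}}}$, exactly as in the paper's manipulation $|\alpha^{-q^{\mathcal{E}}}(\tilde{\varphi}(\alpha)-\alpha^{q^{\mathcal{E}}})|_p = |(\alpha^{-q^{\mathcal{E}}}-\tilde{\varphi}(\alpha^{-1}))\tilde{\varphi}(\alpha)|_p \leq p^{-\frac{1}{\mathcal{E}}}|\tilde{\varphi}(\alpha)|_p$. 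Your additional observation that $|\tilde{\varphi}(\alpha)|_p = |\alpha|_p$ is valid (the extended $p$-adic absolute value is Galois-invariant) but not even needed, since one may simply bound each factor by the corresponding $\max$.
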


\begin{proof}
For $|\alpha|_p \leq 1$ this is the above lemma. Let now $|\alpha|_p > 1$ and consider $\alpha^{-1}$. Then we can use the ultrametric triangle inequality and with the above lemma we get
\begin{align*}
|\alpha^{-q^\mathcal{E}}(\tilde{\varphi}(\alpha)-\alpha^{q^\mathcal{E}})|_p = |(\alpha^{-q^\mathcal{E}}-\tilde{\varphi}(\alpha^{-1})) \tilde{\varphi}(\alpha)|_p \leq p^{-\frac1{\mathcal{E}}}| \tilde{\varphi}(\alpha)|_p
\end{align*}
which gives the desired result.
\end{proof}

Recall that an element $\sigma \in \Gal(K(N)/\Q)$ acts on the places of $K(N)$ by $|\cdot|_{\sigma v}= |\sigma^{-1} (\cdot)|_v$.

\begin{lemma}
\label{Lemma6.3}
Let $p^2 \nmid N$. Let $\alpha\in K(N) \backslash \mu_\infty$ be non-zero. Then 
\begin{align}
\label{Equation6.10}
h(\alpha) \geq  \left(\frac{\log p}{\mathcal{E}(1+q^\mathcal{E})(1+  5\cdot2^{11})}\right)^4.
\end{align}
\end{lemma}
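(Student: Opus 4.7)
The plan is to suppose for contradiction that $h(\alpha)$ is strictly smaller than the bound in \eqref{Equation6.10} and to derive a contradiction by applying the product formula to $\beta - 1$, where $\beta := \tilde{\varphi}(\alpha)\,\alpha^{-q^{\mathcal{E}}} \in K(N)^*$. One first checks that $\beta \notin \mu_\infty$: iterating the relation $\tilde{\varphi}(\alpha) = \beta\,\alpha^{q^{\mathcal{E}}}$ for $t$ steps, with $t$ the order of $\tilde{\varphi}|_{K(N)}$, produces $\alpha = \xi\, \alpha^{q^{t\mathcal{E}}}$ for some root of unity $\xi$, forcing $\alpha \in \mu_\infty$, contrary to assumption. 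In particular $\beta - 1 \in K(N)^*$, and the standard height inequalities give $h(\beta) \le (1+q^{\mathcal{E}})h(\alpha)$.

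The key step is Lemma \ref{LemmaFrobTame}~(iii), which places $\tilde{\varphi}$ in the center of $\Gal(K(N)/\Q)$. For any $\sigma \in \Gal(K(N)/\Q)$ and $w = \sigma v$, centrality yields $\tilde{\varphi}(\sigma^{-1}\alpha) = \sigma^{-1}\tilde{\varphi}(\alpha)$, and applying Lemma \ref{Lemma4.1NFC} to $\sigma^{-1}\alpha \in \tilde{F}(M)^*$ transfers the estimate to the conjugate place $w$:
\[
\log|\beta - 1|_w \;\le\; -\tfrac{\log p}{\mathcal{E}} + \log^+|\tilde{\varphi}(\alpha)|_w + q^{\mathcal{E}}\log^-|\alpha|_w.
\]
Summing this over all places $w \mid p$ weighted by $d_w/[K(N):\Q]$, using $\sum_{w\mid p}d_w = [K(N):\Q]$ together with the Galois-invariance of the height, the $p$-adic contribution to $\sum_w \tfrac{d_w}{d}\log|\beta-1|_w$ is at most $-\log p/\mathcal{E} + (1+q^{\mathcal{E}})h(\alpha)$. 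The non-archimedean places away from $p$ contribute at most $h(\beta)$ by the ultrametric inequality $|\beta-1|_w \le \max(|\beta|_w,1)$, and the archimedean contribution is bounded by Lemma \ref{sumexpl} with $\delta = 1/4$, so that $40/\delta^{4} = 5\cdot 2^{11}$, yielding at most $(5\cdot 2^{11})\,h(\beta)^{1/4}$.

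Combining these three estimates via the product formula $\sum_w \tfrac{d_w}{d}\log|\beta-1|_w = 0$ and substituting $h(\beta) \le (1+q^{\mathcal{E}})h(\alpha)$ gives
\[
\tfrac{\log p}{\mathcal{E}} \;\le\; (5\cdot 2^{11})\bigl((1+q^{\mathcal{E}})h(\alpha)\bigr)^{1/4} + 2(1+q^{\mathcal{E}})h(\alpha).
\]
Under the contradiction hypothesis we have $(1+q^{\mathcal{E}})h(\alpha) < 1$, so the linear term is absorbed into the fourth-root term, and rearranging recovers \eqref{Equation6.10}. The main obstacle will be verifying the hypotheses of Lemma \ref{sumexpl}: the inequality $h(\beta)^{1/2} \le 1/2$ is automatic under the contradiction hypothesis once $p$ is large, while the degree requirement $[\Q(\beta):\Q] \ge 16$ may need a separate treatment in the small-degree exceptional case, where $\beta$ lives in a bounded-degree number field and a Northcott-type lower bound on $h(\beta)$ applies directly and yields an even stronger estimate.
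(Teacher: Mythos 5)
Your proposal is correct and follows essentially the same route as the paper's proof: the same product-formula decomposition into the places above $p$ (handled via the centrality of $\tilde{\varphi}|_{K(N)}$ from Lemma \ref{LemmaFrobTame} (iii) together with the local estimate of Lemma \ref{Lemma4.1NFC}), the remaining finite places, and the archimedean places controlled by Lemma \ref{sumexpl} with $\delta=\tfrac14$. Applying the product formula to $\beta-1$ rather than to $\tilde{\varphi}(\alpha)-\alpha^{q^{\mathcal{E}}}$ merely replaces the paper's linear term $(1+q^{\mathcal{E}})h(\alpha)$ by $2(1+q^{\mathcal{E}})h(\alpha)$, which is harmlessly absorbed given the huge slack in the constant, and your explicit verification that $\beta\notin\mu_\infty$ (and your remark on the degree hypothesis $[\Q(\beta):\Q]\geq 16$) is if anything more careful than the paper, which only checks $\beta\neq 1$ and dismisses the remaining hypotheses as clear.
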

\begin{proof}
We follow the proof of Lemma 5.1 of \cite{MR3090783} closely. Let $x = \tilde{\varphi}|_{K(N)}(\alpha) - \alpha^{q^\mathcal{E}} \in K(N)$ where $\tilde{\varphi}|_{K(N)}$ is the lift of the Frobenius from before. This is nonzero since otherwise we would get $h(\alpha) = h(\tilde{\varphi}|_{K(N)}(\alpha)) = h(\alpha^{q^\mathcal{E}}) = {q^\mathcal{E}} h(\alpha)$ hence $h(\alpha) = 0$ which contradicts our assumption on $\alpha$. So we can use the product formula
\begin{align}
\sum_w d_w \log |x|_w = 0
\end{align}
where the sum is over all places of $K(N)$.

Let $w$ be a finite place of $K(N)$ above $p$. Then $w= \sigma^{-1} v$ for some $\sigma \in \Gal (K(N)/\Q)$ and $v$ a place above $p$ because this Galois group acts transitively on the places of $K(N)$ above $p$. By Lemma \ref{LemmaFrobTame} (iii) $\tilde{\varphi}|_{K(N)}$ and $\sigma$ commute and we get
\begin{align*}
|x|_w = |\sigma(\tilde{\varphi}|_{K(N)}(\alpha))-\sigma(\alpha)^{q^\mathcal{E}}|_v = | \tilde{\varphi}|_{K(N)}(\sigma(\alpha))-\sigma(\alpha)^{q^\mathcal{E}}|_v.
\end{align*}
Now we estimate the right-hand side from above using Lemma \ref{Lemma4.1NFC} applied to $\sigma(\alpha)$
\begin{align*}
|x|_w & = |\tilde{\varphi}| _{K(N)}(\alpha) - \alpha|_w \\
& = |\sigma(\tilde{\varphi}|_{K(N)}(\alpha)) - \sigma (\alpha) |_v\\
& = |\tilde{\varphi}|_{K(N)}(\sigma(\alpha)) - \sigma (\alpha) |_v \\
& \leq p^{-\frac1{\mathcal{E}}} \max(1,| \tilde{\varphi}|_{K(N)}(\sigma(\alpha))|_v)\max(1,|\sigma(\alpha)|_v)^{q^\mathcal{E}}\\
& = p^{-\frac1{\mathcal{E}}} \max(1,|\sigma(\tilde{\varphi}|_{K(N)}(\alpha))|_v)\max(1,|\sigma(\alpha)|_v)^{q^\mathcal{E}}\\
& = p^{-\frac1{\mathcal{E}}} \max(1,| \tilde{\varphi}|_{K(N)}(\alpha)|_w)\max(1,|\alpha|_w)^{q^\mathcal{E}}.
\end{align*}
For an arbitrary finite place $w$ of $K(N)$, the ultrametric triangle inequality gives
\begin{align*}
|x|_w \leq \max(| \tilde{\varphi}|_{K(N)}(\alpha)|_w,|\alpha^{q^\mathcal{E}}|_w) \leq \max(1,| \tilde{\varphi}|_{K(N)}(\alpha)|_w)\max(1,|\alpha|_w)^{q^\mathcal{E}}.
\end{align*}
For the infinite places $w$ we have to take a little detour. We define
\begin{align*}
\beta = \frac{\tilde{\varphi}|_{K(N)}(\alpha)}{\alpha^{q^\mathcal{E}}} \in \overline{\Q} \setminus \{1\}
\end{align*}
and bound
\begin{align*}
|x|_w &= |\beta -1|_w |\alpha^{q^\mathcal{E}}|_w \leq |\beta -1|_w   \max(1,|\alpha^{q^\mathcal{E}}|_w) \\
&\leq |\beta -1|_w  \max(1,| \tilde{\varphi}|_{K(N)}(\alpha)|_w) \max(1,|\alpha^{q^\mathcal{E}}|_w)
\end{align*}
instead. We get
\begin{align*}
0  = &\sum_w d_w \log |x|_w \\
 = &\sum_{w|p} d_w \log |x|_w + \sum_{w\nmid p, w \nmid \infty} d_w \log |x|_w + \sum_{w|\infty} d_w \log |x|_w\\
 \leq& \sum_{w|p} d_w \log (p^{-\frac1{\mathcal{E}}} \max(1,| \tilde{\varphi}|_{K(N)}(\alpha)|_w)\max(1,|\alpha^{q^\mathcal{E}}|_w))\\
 &+ \sum_{w\nmid p, w \nmid \infty} d_w \log (\max(1,| \tilde{\varphi}|_{K(N)}(\alpha)|_w)\max(1,|\alpha^{q^\mathcal{E}}|_w)) \\
 &+ \sum_{w|\infty} d_w \log (|\beta -1|_w  \max(1,| \tilde{\varphi}|_{K(N)}(\alpha)|_w) \max(1,|\alpha^{q^\mathcal{E}}|_w)).
\end{align*}
After dividing by $[K(N):\Q]$ this gives
\begin{align}
\frac{\log p}{\mathcal{E}} - \frac1{[K(N):\Q]} \sum_{w|\infty} d_w \log |\beta -1|_w \leq (1+q^\mathcal{E})h(\alpha).\label{boundNFC1}
\end{align}
Let us now assume that $h(\beta) \leq \frac14, [ \Q (\beta):\Q ] \geq 16$ and $h(\alpha) \leq 1$. This is without loss of generality since otherwise the conclusion of the Lemma is clear.
By Lemma \ref{sumexpl} with $\delta = \frac14$ we get
\begin{align*}
\frac1{[K(N):\Q]} \sum_{\tau: \Q (\beta) \to \C} \log |\tau(\beta) -1| & = \frac1{[\Q(\beta):\Q]} \sum_{\tau: \Q (\beta) \to \C} \log |\tau(\beta) -1|\\
& \leq 5\cdot2^{11} h(\beta)^{\frac14}\\
& \leq 5\cdot2^{11} ((1+q^\mathcal{E})h(\alpha))^{\frac14}.
\end{align*}
Together with estimate \eqref{boundNFC1} we get
\begin{align*}
\frac{\log p}{\mathcal{E}} - 5\cdot2^{11}((1+q^\mathcal{E})h(\alpha))^{\frac14} &\leq (1+q^\mathcal{E})h(\alpha).
\end{align*}
Hence
\begin{align*}
\frac{\log p}{\mathcal{E}} &\leq (1+q^\mathcal{E})(1+ 5\cdot2^{11}) h(\alpha)^{\frac14}
\end{align*}
which gives
\begin{align*}
\left(\frac{\log p}{\mathcal{E}(1+q^\mathcal{E})(1+ 5\cdot2^{11})}\right)^4 &\leq h(\alpha).
\end{align*}
\end{proof}

\newpage

\section{The wildly ramified case}

Again, we fix $E$, $F$, $K$, $p$ and $p^2 = q$ as in section \ref{NotNFC}. For this whole section we will only consider the case where $p^2|N$. Let $v$ be the place of $F$ above $p$. Recall that we considered $F$ as a subfield of $\overline{\Q_p}$, hence for an element $\alpha\in F$ we can consider $|\alpha|_p$.

\begin{lemma}
Let $\psi \in \Gal(\Q_q(N)/\Q_q(N/p))$. Then $\psi|_{F\cap \Q_q (N)} = \id$.
\end{lemma}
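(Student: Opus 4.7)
The plan is to reduce the statement to Lemma \ref{equal} and the Galois correspondence, with no further work required. Recall that Lemma \ref{equal} asserts $\Q_q(N)\cap F = \Q_q(N/p)\cap F$ under the assumption $p^2\mid N$, which is in force throughout this section. So I would begin by invoking it directly: any element of $F\cap\Q_q(N)$ already lies in $F\cap\Q_q(N/p)$, and in particular in $\Q_q(N/p)$.

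From there the conclusion is immediate. By Galois theory, $\Q_q(N/p)$ is the fixed field inside $\Q_q(N)$ of the subgroup $\Gal(\Q_q(N)/\Q_q(N/p))$. Hence for any $\psi$ in this subgroup and any $x\in F\cap\Q_q(N)\subset \Q_q(N/p)$, we have $\psi(x)=x$; that is, $\psi|_{F\cap\Q_q(N)}=\id$.

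There is essentially no main obstacle: the genuine content has been packaged into Lemma \ref{equal}, whose proof uses the key inequality $[F:\Q_q]<p$ (ensured by property (P3) of the chosen prime $p$) to rule out any overlap of ramification between $F/\Q_q$ and the wild step $\Q_q(N)/\Q_q(N/p)\cong(\Z/p\Z)^2$. The only thing worth double‑checking when writing the proof is that $p^2\mid N$, which is the standing assumption of the section, so the hypothesis of Lemma \ref{equal} is indeed satisfied.
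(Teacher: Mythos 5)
Your argument is correct and is exactly the paper's proof: apply Lemma \ref{equal} (valid since $p^2\mid N$ is the standing assumption of this section) to get $F\cap\Q_q(N)=F\cap\Q_q(N/p)\subset\Q_q(N/p)$, and note that $\psi$ fixes $\Q_q(N/p)$ pointwise. Nothing further is needed.
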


\begin{proof}
By Lemma \ref{equal} we have $\Q_q(N)\cap F = \Q_q (N/p) \cap F$. Since $\psi \in \Gal(\Q_q(N)/\Q_q(N/p))$, $\psi$ must be the identity on $\Q_q(N/p)$, hence also on $\Q_q(N/p) \cap F = \Q_q(N) \cap F$.
\end{proof}

\begin{lemma}
\label{Lemma4.2NFC}
Let $\alpha \in F(N)$. Then 
\begin{align}
|\psi(\alpha)^q-\alpha^q|_p \leq p^{-1} \max (1, |\psi(\alpha)|_p)^q \max(1,|\alpha|_p)^q
\end{align}
for all $\psi \in \Gal(F(N)/F(N/p))$.
\end{lemma}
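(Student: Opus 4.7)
The plan is to adapt Habegger's proof of the analogous Lemma~4.2 in \cite{MR3090783} to our more general field $F$, using the wild ramification bound from Lemma~\ref{HigherRamificationGroup}. First, as in the proof of Lemma~\ref{Lemma4.1NFC}, I would reduce to the case $|\alpha|_p \leq 1$: since Galois automorphisms preserve the valuation, $|\psi(\alpha)|_p = |\alpha|_p$, so when $|\alpha|_p > 1$ I apply the reduced case to $\alpha^{-1}$ and rescale by $|\alpha|_p^{2q}$ to recover the general inequality. In the reduced case both factors $\max(1,|\psi(\alpha)|_p)^q$ and $\max(1,|\alpha|_p)^q$ equal $1$, so it suffices to show $|\psi(\alpha)^q-\alpha^q|_p \leq p^{-1}$.

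Assuming now $\alpha \in \cO_{F(N)}$, I invoke Lemma~\ref{HigherRamificationGroup} to conclude that $\psi \in G_s(F(N)/F)$ for some $s \geq q^{n-1}-1$. Writing $w$ for the normalized valuation on $F(N)$ and $\delta := \psi(\alpha)-\alpha$, the definition of higher ramification groups yields $w(\delta) \geq s+1 \geq q^{n-1}$. The heart of the proof is then the binomial expansion
\[
\psi(\alpha)^q - \alpha^q \;=\; (\alpha+\delta)^q - \alpha^q \;=\; \delta^q + \sum_{k=1}^{q-1}\binom{q}{k}\,\alpha^{q-k}\,\delta^k,
\]
combined with the arithmetic observation that, since $q = p^2$, Kummer's theorem forces $p \mid \binom{q}{k}$ for every $1 \leq k \leq q-1$.

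Setting $e := e(F(N)/\Q_p)$, so that $|x|_p = p^{-w(x)/e}$ and $w(p) = e$, each mixed term has $w$-valuation at least $e + k\,q^{n-1} \geq e$, while $w(\delta^q) \geq q\cdot q^{n-1} = q^n$. To turn this into the desired bound I combine the estimate $e(F(p^n)/F) \leq q^{n-1}(q-1)$ from Lemma~\ref{ramindex} with $[F:\Q_q] \leq d < p$ to control $e$ and conclude $w(\psi(\alpha)^q - \alpha^q) \geq e$, equivalently $|\psi(\alpha)^q - \alpha^q|_p \leq p^{-1}$. The delicate step is ensuring that $q^n$ dominates the wild contribution to $e$ coming from $F(p^n)/F$; for this the full strength of Lemma~\ref{HigherRamificationGroup}, together with the tameness of $F/\Q_q$ forced by $[F:\Q_q]<p$, is essential.
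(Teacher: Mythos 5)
Your proposal reproduces the paper's argument in all essentials: the reduction to $|\alpha|_p\le 1$ by passing to $\alpha^{-1}$, the appeal to Lemma \ref{HigherRamificationGroup} to get $w(\psi(\alpha)-\alpha)\ge q^{n-1}$, the binomial expansion of $\psi(\alpha)^q-\alpha^q$ with $p\mid\binom{q}{k}$ for $1\le k\le q-1$, and the control of the ramification index via Lemma \ref{ramindex} together with Lemma \ref{Lemma3.4}(ii). Only the bookkeeping differs: you work with the normalized valuation $w$ of $F(N)$, the paper with congruences modulo powers of the maximal ideal $\P$ of $\cO_{F(N)}$.

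The step that does not hold as you justify it is the final domination. With your normalization $e=e(F(N)/\Q_p)=w(p)$ you need $w(\delta^q)\ge e$, i.e. $q^n\ge e$ (the mixed binomial terms are harmless, as they carry a factor $p$). But $e=e(F(N)/F)\cdot e(F/\Q_q)$ since $\Q_q/\Q_p$ is unramified, and while $e(F(N)/F)\le q^{n-1}(q-1)\le q^n$, nothing in the construction of $F$ forces $F/\Q_q$ to be unramified ($K$ may ramify at $p$); already $e(F/\Q_q)=2$ gives $e=2(q-1)q^{n-1}>q^n$ when $e(F(N)/F)$ attains its maximal value. Tameness of $F/\Q_q$, which is what $[F:\Q_q]\le d<p$ buys you, only says $p\nmid e(F/\Q_q)$, not $e(F/\Q_q)=1$, and Lemma \ref{HigherRamificationGroup} as stated gives nothing beyond $s\ge q^{n-1}-1$, so there is no ``full strength'' left to invoke; as written, your estimate only yields $|\psi(\alpha)^q-\alpha^q|_p\le p^{-1/e(F/\Q_q)}$. (To be fair, the paper's own proof phrases the conclusion through $\psi(\alpha)^q-\alpha^q\in\P^{e(F(N)/F)}$, for which $q^n\ge e(F(N)/F)$ is genuinely true, and the same subtlety is hidden in its very last line when this is converted into $|\cdot|_p\le p^{-1}$.) The clean repair is a sharper form of Lemma \ref{HigherRamificationGroup}: in its proof the Herbrand function of the tame extension $F(p^n)/\Q_q(p^n)$ is $u\mapsto u/e_1$ with $e_1=e(F(p^n)/\Q_q(p^n))$, so Herbrand's theorem actually provides $s+1\ge e_1(q^{n-1}-1)+1$; since $e(F(N)/\Q_p)=e_1(q-1)q^{n-1}$ (both $F(N)/F(p^n)$ and $\Q_q(N)/\Q_q(p^n)$ are unramified), one gets $w(\delta^q)\ge q\bigl(e_1(q^{n-1}-1)+1\bigr)\ge e_1(q-1)q^{n-1}$ for $n\ge2$, and your argument then closes with the full bound $p^{-1}$.
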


\begin{proof}
First, we suppose $|\alpha|_p \leq 1$. Let $\psi\in \Gal(F(N)/F(N/p)) $ and consider the restriction $\psi|_{F(p^n)} \in \Gal(F(p^n)/F(p^{n-1}))$. By Lemma \ref{HigherRamificationGroup} this is an element of $G_i(F(N)/F)$ for $i=q^{n-1}-1$.

By the definition of the ramification group, this means
\begin{align*}
\psi(\alpha) -\alpha \in \P^{q^{n-1}}
\end{align*}
where $\P$ is the maximal ideal in the ring of integers of $F(N)$. By Lemmas \ref{ramindex} and \ref{Lemma3.4} (ii) the ramification index $e$ of $F(N)/F$ is at most $q^{n-1}(q-1) \leq q^n$. Therefore, $(\psi(\alpha)-\alpha)^q \in \P^{q^n} \subset \P^e$. Since $p \equiv 0 \mod \P^e$ we conclude
\begin{align*}
(\psi(\alpha)-\alpha)^q \equiv \psi(\alpha)^q - \alpha^q \mod \P^e.
\end{align*}
This leads to $|\psi(\alpha^q)-\alpha^q|_p \leq |p|_p = p^{-1}$. Hence the statement follows if $|\alpha|_p \leq 1$.
Now for $|\alpha|_p > 1$ consider $\alpha^{-1}$ with $|\alpha^{-1}|_p \leq 1$. We get $|\psi(\alpha)^{-q}-\alpha^{-q}|_p \leq p^{-1}$ and
\begin{align*}
|\alpha^{-q}(\psi(\alpha)^q-\alpha^q)|_p = |(\alpha^{-q}-\psi(\alpha)^{-q})\psi(\alpha)^q|_p \leq p^{-1}|\psi(\alpha)^q|_p.
\end{align*}
After multiplying by $|\alpha^q|_p$ we obtain our statement.
\end{proof}

\begin{lemma}
\label{Lemma5.2}
Let $\psi\in \Gal(K(N) / K(N/p))$ and $v$ be the place of $K(N)$ above $p$. Let $$G = \{ \sigma \in \Gal (K(N) / \Q) | \sigma \psi \sigma^{-1} = \psi\}$$ be the centralizer of $\psi$. Then $$|Gv| \geq\frac{[K(N):\Q]}{p^4 d_v}.$$
\end{lemma}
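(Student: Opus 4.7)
The plan is to combine two applications of the orbit--stabilizer theorem. First, $G$ acts on the places of $K(N)$ lying above $p$, giving $|Gv|=|G|/|\Stab_G(v)|$. Since $K(N)/\Q$ is Galois, every decomposition group at a place above $p$ has order $d_v$, and $\Stab_G(v)=G\cap D_v$ has order at most $d_v$. Hence $|Gv|\geq |G|/d_v$, and it suffices to show $|G|\geq [K(N):\Q]/p^4$.

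Since $G$ is the centralizer of $\psi$, orbit--stabilizer applied to the conjugation action of $\Gal(K(N)/\Q)$ on itself gives $[\Gal(K(N)/\Q):G]=|C|$, where $C$ is the $\Gal(K(N)/\Q)$-conjugacy class of $\psi$. So I need to prove $|C|\leq p^4$. Because $E[N/p]$ is stable under the absolute Galois group of $\Q$, the field $K(N/p)$ is Galois over $\Q$, hence $H:=\Gal(K(N)/K(N/p))$ is a normal subgroup of $\Gal(K(N)/\Q)$. In particular $C\subset H$, and the task reduces to proving $|H|\leq p^4$.

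For this bound, every $\psi\in H$ fixes $E[N/p]$ pointwise, and for $P\in E[N]$ we have $p(\psi(P)-P)=\psi(pP)-pP=0$ because $pP\in E[N/p]$. Thus $\psi(P)-P\in E[p]$, and the assignment $\psi\mapsto(P\mapsto\psi(P)-P)$ defines a map $H\to\Hom(E[N]/E[N/p],E[p])$. This map is injective since $\psi$ is determined by its effect on $E[N]$. As both $E[N]/E[N/p]$ and $E[p]$ are isomorphic to $(\Z/p\Z)^2$, the target has cardinality $p^4$, proving $|H|\leq p^4$. The only genuinely new ingredient is this linearization trick; all remaining steps are routine applications of orbit--stabilizer together with the fact that $K(N/p)/\Q$ is Galois.
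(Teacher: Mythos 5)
Your proposal is correct and follows essentially the same route as the paper: normality of $H=\Gal(K(N)/K(N/p))$ puts the conjugacy class of $\psi$ inside $H$, orbit--stabilizer for conjugation gives $|G|\geq [K(N):\Q]/|H|$, and orbit--stabilizer for the action on places together with $|\Stab_{\Gal(K(N)/\Q)}(v)|=d_v$ finishes the estimate. The only difference is that where the paper obtains $|H|\leq p^4$ by restricting to $\Gal(\Q(N)/\Q(N/p))$ and citing the proof of Habegger's Lemma 5.2, you prove it directly via the injective map $\psi\mapsto\bigl(P\mapsto\psi(P)-P\bigr)$ into $\Hom\bigl(E[N]/E[N/p],E[p]\bigr)$, which is exactly the linearization ("logarithm") used in the paper's Lemma \ref{Lemma6.2}, so your argument is self-contained but not substantively different.
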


\begin{proof}
Let $H := \Gal (K (N) /K (N/p))$, it is a normal subgroup of $\Gal (K(N) / \Q)$. The orbit of $\psi$ under conjugation by $\Gal(K(N)/\Q)$ is contained in $H$. The stabilizer of this action is $G$ so we can use the orbit-stabilizer theorem. Furthermore, by the proof of Lemma 5.2 of \cite{MR3090783}, we have $|\Gal(\Q(N)/\Q(N/p))| \leq p^4$. Since $H$ is isomorphic to a subgroup of that group, we have $|H| \leq p^4$. We get

\begin{align*}
|G| \geq \frac{|\Gal(K(N)/\Q)|}{|H|} = \frac{[K(N):\Q]}{|H|} \geq \frac{[K(N):\Q]}{p^4}.
\end{align*}

Furthermore, again by the orbit-stabilizer theorem, for a place $v$ of $K(N)$ above $p$ we have 
\begin{align}
\label{4.21}
|Gv| = \frac{|G|}{|\Stab_G (v)|} \geq \frac{[K(N):\Q]}{p^4 |\Stab_G (v)|}.
\end{align}
The Galois group $\Gal(K(N)/\Q)$ acts transitively on all places of $K(N)$ lying above $p$ and the total number of such places is $\frac{[K(N):\Q]}{d_v}$ since $K(N)$ is a Galois extension of $\Q$.
%This is multiplicative, which means that we can take a place $w$ of $K$ and get $\frac{[K:\Q]}{[K_w:\Q_p]}$ places of $K$ over $p$ and $\frac{[K(N):K]}{[K(N)_v:K_w]}$ places of $K(N)$ over $w$. On the other hand, the number of places in $K(N)$ above $p$ is also the size of the orbit of $v$ in the Galois group $\Gal(K(N)/\Q)$ which
The number of places is by the orbit-stabilizer theorem again the same as $$\frac{|\Gal(K(N)/\Q)|}{|\Stab_{\Gal(K(N)/\Q)} (v)|}.$$

This gives us the following inequality: $$|\Stab_G (v)| \leq |\Stab_{\Gal(K(N)/\Q)} (v)| = d_v.$$

After inserting this in equation \eqref{4.21} we get the desired result
\begin{align}
|Gv| &\geq \frac{[K(N):\Q]}{p^4 |\Stab_G (v)|} \geq \frac{[K(N):\Q]}{p^4 d_v}.
\end{align}
\end{proof}

The next height bound is the analogue of Lemma 5.3 of \cite{MR3090783}.

\begin{lemma}
\label{Lemma5.3}
Let $\alpha\in K(N) \backslash \mu_\infty$ be non-zero and let $n\geq 2$ be the greatest integer with $p^n \mid N$. If $\alpha^q\notin F(N/p)$, then
\begin{align}\label{Estimate5.8}
h(\alpha) \geq \frac{(\log p)^4}{4\cdot 10^6 p^{32}}.
\end{align}
\end{lemma}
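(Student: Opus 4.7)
The plan is to mimic the structure of the proof of Lemma \ref{Lemma6.3} from the tame case, but with Lemmas \ref{Lemma4.2NFC} and \ref{Lemma5.2} replacing the Frobenius-lift estimates of Lemma \ref{LemmaFrobTame}. First I would use the hypothesis $\alpha^q \notin F(N/p)$ to pick $\psi \in \Gal(F(N)/F(N/p))$ with $\psi(\alpha)^q \neq \alpha^q$; its restriction (which I also denote $\psi$) lies in $\Gal(K(N)/K(N/p))$, and the element $x := \psi(\alpha)^q - \alpha^q \in K(N)^*$ is non-zero, so the product formula $\sum_w d_w \log|x|_w = 0$ is available.

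For a finite place $w$ above $p$ of the form $w = \sigma^{-1} v$ with $\sigma \in G$ (the centralizer of $\psi$ in $\Gal(K(N)/\Q)$ from Lemma \ref{Lemma5.2}), the commutation $\sigma\psi = \psi\sigma$ combined with Lemma \ref{Lemma4.2NFC} applied to $\sigma(\alpha) \in F(N)$ yields
\begin{align*}
|x|_w \leq p^{-1} \max(1, |\psi(\alpha)|_w)^q \max(1, |\alpha|_w)^q.
\end{align*}
For the remaining finite places I would use the non-archimedean triangle inequality, losing the factor $p^{-1}$. For the archimedean places, setting $\beta := \psi(\alpha)^q/\alpha^q$ gives $|x|_w = |\alpha|_w^q|\beta-1|_w \leq \max(1,|\alpha|_w)^q|\beta-1|_w$. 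A crucial check here is that $\beta$ is neither $0$, $1$, nor a root of unity: if $\beta \in \mu_\infty$ then $\psi(\alpha)/\alpha \in \mu_\infty$, and Lemma \ref{Lemma3.6} would force $\psi(\alpha)/\alpha \in \mu_q$, making $\beta=1$ and hence $x=0$, contradiction.

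Adding up the contributions and dividing by $[K(N):\Q]$, the bound $|Gv|\cdot d_v \geq [K(N):\Q]/p^4$ from Lemma \ref{Lemma5.2} and the invariance $h(\psi(\alpha)) = h(\alpha)$ combine to produce
\begin{align*}
\frac{\log p}{p^4} \leq 2q\, h(\alpha) + \frac{1}{[K(N):\Q]} \sum_{w \mid \infty} d_w \log|\beta - 1|_w.
\end{align*}
To control the archimedean sum I would rewrite it as a sum over embeddings $\tau:\Q(\beta)\hookrightarrow\C$ weighted by $1/[\Q(\beta):\Q]$ and apply Lemma \ref{sumexpl} with $\delta=1/4$, using $h(\beta) \leq 2q\,h(\alpha)$. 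Either the target lower bound on $h(\alpha)$ already holds for free (if $[\Q(\beta):\Q] < 16$ or $h(\beta)^{1/2} > 1/2$), or the hypotheses of Lemma \ref{sumexpl} are in force.

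The final step is purely algebraic: inserting $\tfrac{40}{\delta^4}(2q\,h(\alpha))^{1/2-\delta}$ into the displayed inequality and solving for $h(\alpha)$. Since $q = p^2$, the dominant term on the right is of order $p^{a}\,h(\alpha)^{1/4}$, so one extracts $h(\alpha) \geq (\log p)^4/(Cp^{e})$ with absolute constants $C,e$, and after a crude loosening one obtains the announced $(\log p)^4/(4\cdot 10^6\,p^{32})$. The main obstacle I anticipate is the careful bookkeeping of the centralizer argument in Step 2 and the verification that $\beta$ is not a root of unity; everything else is routine after those two technical points are in place.
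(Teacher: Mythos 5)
Your proposal follows essentially the same route as the paper's proof: the same auxiliary element $x=\psi(\alpha^q)-\alpha^q$, the product formula split over the orbit $Gv$ using Lemma \ref{Lemma5.2}, the local estimate of Lemma \ref{Lemma4.2NFC} at places in $Gv$, the quantity $\beta=\psi(\alpha^q)/\alpha^q$ with the root-of-unity exclusion via Lemma \ref{Lemma3.6}, and Lemma \ref{sumexpl} with $\delta=\tfrac14$ followed by solving for $h(\alpha)$. The only differences are cosmetic bookkeeping (e.g.\ $w=\sigma^{-1}v$ versus $w=\sigma v$, and your explicit flagging of the degree/height hypotheses of Lemma \ref{sumexpl}, which the paper treats even more briefly), so the proposal is correct and matches the paper's argument.
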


\begin{proof}By hypothesis we may chose $\psi \in \Gal(F(N)/F(N/p))$ with $\psi (\alpha^q) \neq \alpha^q$.

We let $$x = \psi(\alpha^q) - \alpha^q$$ and observe $x\neq 0$ by our choice of $\psi$. So
\begin{align}
\sum_v d_v \log |x|_v = 0 \label{Summe5.9}
\end{align}
by the product formula.

Say $G= \{ \sigma \in \Gal (K(N) / \Q) | \sigma \psi \sigma^{-1} = \psi\}$ as in Lemma \ref{Lemma5.2} and $v$ is the place of $K(N)$ coming from the restriction of the $p$-adic valuation on $\overline{\Q_p}$ to $K(N)$. For $\sigma\in G$ we have $$|(\sigma\psi\sigma^{-1}) (\alpha^q) - \alpha^q|_{\sigma v} = |\psi(\sigma^{-1}(\alpha^q))- \sigma^{-1}(\alpha^q)|_p.$$

We may apply Lemma \ref{Lemma4.2NFC} to $\sigma^{-1}(\alpha)$. This implies

\begin{align*}
|(\sigma\psi\sigma^{-1})(\alpha^q)-\alpha^q|_{\sigma v} & = |\psi(\sigma^{-1}(\alpha))^q- \sigma^{-1}(\alpha)^q|_v\\
& \leq p^{-1}\max \{1,|\psi(\sigma^{-1}(\alpha))|_v\}^q\max\{1,|\sigma^{-1}(\alpha)|_v\}^q\\
&\leq p^{-1}\max \{1,|(\sigma\psi\sigma^{-1})(\alpha)|_{\sigma v}\}^q\max\{1,|\alpha|_{\sigma v}\}^q.
\end{align*}
Now $\sigma\psi\sigma^{-1} = \psi$ since $\sigma\in G$. Therefore,
\begin{align}
|x|_w  \leq p^{-1}\max\{1,|\psi(\alpha)|_w\}^q\max\{1,|\alpha|_w\}^q \text{ for all } w\in Gv. \label{Estimate5.10}
\end{align}
If $w$ is an arbitrary finite place of $K(N)$, the ultrametric triangle inequality implies
\begin{align}
|x|_w \leq \max\{1,|\psi(\alpha)|_w\}^q\max\{1,|\alpha|_w\}^q. \label{Estimate5.11}
\end{align}
Now let $w$ be an infinite place. We define $$\beta = \frac{\psi(\alpha^q)}{\alpha^q} \in\overline{\Q} \backslash \{1\}$$
and bound the following expression instead:
\begin{align}
|x|_w = |\beta - 1|_w |\alpha^q|_w \leq |\beta - 1|_w \max\{1,|\alpha|_w\}^q.
 \label{Estimate5.12}
\end{align}
We split the sum \eqref{Summe5.9} up into the finite places in $Gv$, the remaining finite places, and the infinite places and the continue like in the proof of Lemma\ref{Lemma6.3}. The estimates \eqref{Estimate5.10}, \eqref{Estimate5.11} and \eqref{Estimate5.12} together with the product formula \eqref{Summe5.9} imply
\begin{align}
0 \leq &\sum_{w\in Gv} d_w (\log p^{-1} +q\log(\max\{1,|\psi(\alpha)|_w\}\max\{1,|\alpha|_w\})) \nonumber\\
&+ \sum_{w\nmid\infty, w\notin Gv} d_w q \log(\max\{1,|\psi(\alpha)|_w\}\max\{1,|\alpha|_w\}) \nonumber\\
&+ \nonumber \sum_{w\mid\infty} d_w (\log|\beta-1|_w + q\log\max\{1,|\alpha|_w\})\\
= &\sum_{w\in Gv} d_w \log p^{-1} \nonumber\\
&+ \sum_{w\nmid\infty} d_w q \log(\max\{1,|\psi(\alpha)|_w\}\max\{1,|\alpha|_w\}) \nonumber\\
\nonumber&+ \sum_{w\mid\infty} d_w (\log|\beta-1|_w + q\log\max\{1,|\alpha|_w\})\\
\leq &\sum_{w\in Gv} d_w \log p^{-1} \label{Estimate5.13}\\
&+ \sum_{w} d_w q \log(\max\{1,|\psi(\alpha)|_w\}\max\{1,|\alpha|_w\}) \nonumber\\
\nonumber&+ \sum_{w\mid\infty} d_w \log|\beta-1|_w.
\end{align}
Moreover, since the action of the Galois group is transitive and all fields here are Galois over $\Q$, all local degrees $d_w$ with $w \in Gv$ equal $d_v$. So 
\begin{align*}
\sum_{w\in Gv} d_w \log p^{-1} = d_v \log p^{-1} |Gv| \leq -d_v \log p \frac{ [K(N):\Q]}{p^4 d_v} = - \frac{\log p}{p^4} [K(N):\Q]
\end{align*}
by Lemma \ref{Lemma5.2}. We use this estimate together with \eqref{Estimate5.13} and after dividing by $[K(N):\Q]$ we obtain 
\begin{align*}
0 \leq -\frac{\log p}{ p^4} + \frac1{[K(N):\Q]}\left( \sum_{w\mid\infty} d_w \log |\beta - 1|_w \right) + qh(\psi(\alpha)) + qh(\alpha).
\end{align*}
Also, $h(\psi(\alpha)) = h(\alpha)$ and $q = p^2$, hence
\begin{align*}
\frac{\log p}{p^4} \leq \frac1{[\Q(\beta):\Q]}\left( \sum_{\tau: \Q(\beta) \hookrightarrow \C} \log |\tau(\beta) - 1| \right) + 2p^2h(\alpha).
\end{align*}

By construction we certainly have $\beta \neq 0,1$ and in order to apply Lemma \ref{sumexpl} it remains to show that $\beta$ is not a root of unity. If we assume the contrary, then $\frac{\psi(\alpha)}{\alpha}$ will be a root of unity too. Lemma \ref{Lemma3.6} implies $\left(\frac{\psi(\alpha)}{\alpha}\right)^q = 1$ which contradicts our assumption on $\alpha$.

We have $h(\beta) \leq h(\psi(\alpha^q)) + h(\alpha^q) \leq 2p^2 h(\alpha)$. Assuming $h(\beta) \leq \frac14$ and $h(\alpha) \ leq 1$ (which we can do since otherwise we would have a lower bound for $h(\alpha)$ that is better than the claim), we apply Lemma \ref{sumexpl} with $\delta = \frac14$ and get:
\begin{align*}\frac{1}{[\Q(\beta):\Q]} \sum_{\tau : \Q (\beta) \hookrightarrow \C} \log |\tau (\beta) -1 | &\leq 5\cdot2^{11} h(\beta)^{\frac14}\\
&\leq  5\cdot2^{11} (2p^2 h(\alpha))^{\frac14}
\end{align*}
and hence
\begin{align*}
\frac{\log p}{p^4} &\leq  5\cdot2^{11} (2p^2 h(\alpha))^\frac14 + 2p^2 h(\alpha)\\
& \leq 5\cdot2^{11} (2p^2 h(\alpha))^\frac14  + 2p^2 h(\alpha)^\frac14\\
& \leq (5\cdot2^{11} (2p^2)^\frac14+2p^2) h(\alpha)^\frac14.
\end{align*}

We solve the above inequality for $h(\alpha)$:
\begin{align*}
h(\alpha) & > \left(\frac{\log p}{(5\cdot2^{11} (2p^2)^{\frac14}+2p^2)p^4}\right)^4\\
& \geq \left(\frac{\log p}{(5\cdot2^{11} 2^\frac14 p^\frac12+2p^4)p^4}\right)^4\\
& \geq \left(\frac{\log p}{(6090 p^{-\frac72}+1)2p^8}\right)^4\\
& \geq \left(\frac{\log p}{46 p^8}\right)^4\\
&= \frac{(\log p)^4}{4\cdot 10^6 p^{32}}.
\end{align*}
\end{proof}

\section{Descent and the final bound}

Again, we fix $E$, $L$ and $p$ as in section \ref{NotNFC}. Let also $\mathcal{E}$ be a multiple of $[F:\Q_p](q-1)$. Now we want to turn the conditional bound in the ramified case in an unconditional bound using some descent method. First, we construct a useful automorphism of $K(N)/\Q$.

\begin{lemma}
\label{Lemma6.4}
Let $n\geq 0$ be the greatest integer with $p^n\mid N$. There exists $\sigma_F\in\Gal(F (N)/F)$, lying in the center of $\Gal(K(N)/K)$ such that $\sigma_F(\zeta)=\zeta^{4^{[F:\Q_q]}}$ for all $\zeta\in\mu_{p^n}$. Moreover, $\sigma_F$ acts on $E[p^n]$ as multiplication by $2^{[F:\Q_q]}$.
\end{lemma}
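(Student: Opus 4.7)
The plan is to apply Lemma~\ref{Lemma3.4}(v) to produce the scalar action on $E[p^n]$, to lift it to an element of $\Gal(F(N)/F(M))$ that acts trivially on $F(M)$, and then to observe that under the injective Galois representation $\Gal(K(N)/K)\hookrightarrow\Aut(E[N])\cong\Aut(E[p^n])\times\Aut(E[M])$ the image is a pair of scalars, hence central. Since $p\geq 2d+2\geq 4$, the integer $2$ is coprime to $p$, and Lemma~\ref{Lemma3.4}(v) applied with $2$ in place of the generic $M$ (its proof via Habegger's Lemma~3.3(iii) needs nothing beyond coprimality to $p$) yields $\tau\in\Gal(F(p^n)/F)$ acting on $E[p^n]$ as multiplication by $2^{[F:\Q_q]}$. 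The case $n=0$ is trivial (take $\sigma_F=\id$), so I assume $n\geq 1$.

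The key geometric step is the identity $F(p^n)\cap F(M)=F$. Since $\Q_q(M)/\Q_q$ is unramified (Lemma~3.4(ii) of \cite{MR3090783}), the compositum $F(M)=F\cdot\Q_q(M)$ is unramified over $F$. On the other hand, $\Q_q(p)/\Q_q$ is totally ramified (Lemma~3.3(i) of \cite{MR3090783}), so its residue field is $\F_q\subseteq k_F$, and $F(p)=F\cdot\Q_q(p)$ has the same residue field as $F$; thus $F(p)/F$ is totally ramified. Combined with the total ramification of $F(p^n)/F(p)$ provided by Lemma~\ref{ramindex}, the whole tower $F(p^n)/F$ is totally ramified. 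The intersection $F(p^n)\cap F(M)$ is then simultaneously unramified (as a subextension of $F(M)/F$) and contained in the totally ramified $F(p^n)/F$, which forces it to equal $F$. The restriction map $\Gal(F(N)/F(M))\xrightarrow{\sim}\Gal(F(p^n)/F)$ is therefore an isomorphism, and I take $\sigma_F\in\Gal(F(N)/F(M))$ to be the unique preimage of $\tau$.

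By construction $\sigma_F$ acts on $E[p^n]$ as multiplication by $2^{[F:\Q_q]}$ and trivially on $E[M]$, and Galois-equivariance of the Weil pairing $e_{p^n}$ gives $\sigma_F(\zeta)=\zeta^{\det(2^{[F:\Q_q]}I)}=\zeta^{4^{[F:\Q_q]}}$ for every $\zeta\in\mu_{p^n}$. Under the injection $\Gal(K(N)/K)\hookrightarrow\Aut(E[N])=\Aut(E[p^n])\times\Aut(E[M])$, the image of $\sigma_F|_{K(N)}$ is the pair $(2^{[F:\Q_q]}\cdot I,\,I)$ of scalar matrices, which lies in the center of the product $\Aut(E[N])$; therefore $\sigma_F|_{K(N)}$ commutes with every element of $\Gal(K(N)/K)$. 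The principal obstacle is the ramification-theoretic identity $F(p^n)\cap F(M)=F$; once that is in hand, the clean lift which is trivial on $F(M)$ yields a block-scalar image and centrality becomes automatic.
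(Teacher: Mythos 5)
Your overall skeleton (Lemma \ref{Lemma3.4}(v) applied with $2$ in place of $M$, the Weil pairing computation giving the exponent $4^{[F:\Q_q]}$ on $\mu_{p^n}$, a lift to $\Gal(F(N)/F(M))$, and centrality of $\sigma_F|_{K(N)}$ from its block-scalar image under the faithful representation on $E[p^n]\times E[M]$) is the same as the paper's, but your ``key geometric step'' has a genuine gap. You claim that $F(p)/F$ is totally ramified because ``$F(p)=F\cdot\Q_q(p)$ has the same residue field as $F$''. That inference is false: a compositum with a totally ramified extension of the base field need not be totally ramified over the other field. For example, with $p\equiv 3 \bmod 4$, both $\Q_p(\sqrt{p})/\Q_p$ and $\Q_p(\sqrt{-p})/\Q_p$ are totally ramified, yet their compositum contains $\sqrt{-1}$ and hence the unramified quadratic extension, so the residue field grows. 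In the present setting nothing forces $F(p)/F$ to be totally ramified: $F$ is just some Galois extension of $\Q_q$ of degree at most $d$ containing $K_v$, and one can arrange (say $F=\Q_q((\pi v)^{1/e})$ with $e\mid q-1$, $\pi$ a uniformizer whose $(q-1)$-st root generates $\Q_q(p)$, and $v$ a unit that is not an $e$-th power in $\F_q$) that $F(p)/F$ acquires a nontrivial unramified part. Consequently your identity $F(p^n)\cap F(M)=F$, and with it the isomorphism $\Gal(F(N)/F(M))\cong\Gal(F(p^n)/F)$ used to lift $\tau$ itself, is not justified and can fail.

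The paper's proof works around exactly this point: it shows only that $F(p^n)\cap F(M)$ is an unramified subextension of $F(p^n)/F$, hence (since $F(p^n)/F(p)$ is totally ramified) is contained in $F(p)$, so that $[F(p^n)\cap F(M):F]$ divides $q-1$ by Lemma \ref{Lemma3.4}(vi); it then lifts the $(q-1)$-st power of the automorphism, which is automatically trivial on that intersection, rather than the automorphism itself. To rescue your cleaner ``lift $\tau$ directly'' route you would need a proof of $F(p^n)\cap F(M)=F$ that does not presuppose $F(p)/F$ totally ramified; otherwise you must, as the paper does, pass to a suitable power. The remaining parts of your argument (the $n=0$ case, the Weil-pairing step, and the centrality argument via injectivity of the representation) are correct and essentially identical to the paper's.
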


\begin{proof}
Before we prove this Lemma, let us recall that by Lemma \ref{Lemma3.5} $F(N)$ contains $\mu_{p^n}$.\\

Since $p$ is odd, Lemma \ref{Lemma3.4} (v) implies that there is $\sigma_F^\prime \in \Gal(F (p^n)/F)$ that acts on $E[p^n]$ as multiplication by $2^{[F:\Q_q]} $.
Since the Weil pairing $\langle\cdot,\cdot\rangle$ is surjective, we can find for every root of unity $\zeta\in \mu_{p^n}$ points $P,Q \in E[p^n]$ such that $\langle P,Q\rangle = \zeta$. Now $\sigma^\prime_F (\langle P,Q\rangle ) = \langle \sigma^\prime_F (P), \sigma^\prime_F (Q) \rangle = \langle 2^{[F:\Q_q]} P, 2^{[F:\Q_q]} Q \rangle = \langle P,Q \rangle^{4^{[F:\Q_q]}}$. Hence $\sigma^\prime_F$ acts on $\mu_{p^n}$ as raising to the $4^{[F:\Q_q]}$-th power.\\

We will now lift the automorphism $\sigma_F^{\prime q-1}$ to $\sigma_F^{q-1}\in\Gal(F (N)/F (M))$. For that we consider the following diagram

\begin{center}
\begin{tikzcd}
& \arrow[dash]{dl} \arrow[dash]{dr}F(N) & \\
F(p^n) \arrow[dash]{dr}&& F(M)\arrow[dash]{dl} \\
& F(p^n)\cap F(M) \arrow[dash]{d} &\\
&F&
\end{tikzcd}
\end{center}
and we will prove that $\sigma_F^{\prime q-1}|_{F(p^n)\cap F(M)}$ is the identity.

We know that $F(M)/F$ is unramified by Lemma \ref{Lemma3.4} (ii), hence its subextension $F(p^n) \cap F(M) /F$ is also unramified. But on the other hand, $F(p^n)/F(p)$ is totally ramified by Lemma \ref{ramindex}, hence $F(p^n) \cap F(M)$ has to be a subfield of $F(p)$ which has degree $q-1$ over $F$. Hence we get that $[F(p^n) \cap F(M) : F]$ divides $q-1$.

\begin{comment}By Lemma \ref{ramindex} we know that the extension $F(p^n)/F$ has ramification degree at least $q^{n-1}$ and hence the ramification degree of $F(p^n)\cap F(M)/F$ has to be a divisor of $q^{n-1}$. With the following diagram we will bound its ramification degree from above:

\begin{center}
\begin{tikzcd}
& F(M) \arrow[dash]{dr} \arrow[dash]{dl}& \\
\Q_q(M) \arrow[dash]{dr} && F \arrow[dash]{dl} \\
& \Q_q(M) \cap FÂ \arrow[dash]{d} & \\
&\Q_q &
\end{tikzcd}
\end{center}

By Lemma 3.1 of \cite{MR3090783} the extension $\Q_q(M)/\Q_q$ is unramified. Since the degree, hence also the ramification degree, of $F(M)/\Q_q(M)$ is at most $[F:\Q_q]$, also the ramification degree of every subextension can be at most $[F:\Q_q]$. Hence the ramification degree of $F(p^n)\cap F(M)/F$ is at most $[F:\Q_q]$. Since this is coprime to $p$, it has to be a divisor of $[F:\Q_q]$. This again is a divisor of $\mathcal{E}$ which is coprime to $q^{n-1}$. But since $F(p^n)\cap F(M)/F$ is a subextension of $F(p^n)/F$, which is totally ramified, it has to be totally ramified. Hence the degree must be one and $F(p^n)\cap F(M) = F$.
\end{comment}
So $\sigma_F^{\prime q-1}$ is already in $\Gal(F(p^n)/F(p^n)\cap F(M))$ which is by the above diagram isomorphic to $\Gal(F(N)/F(M))$. We will call the image of $\sigma_F^{\prime q-1}$ under this isomorphism $\sigma_F^{q-1}$.\\
Taking the sum of points gives an isomorphism between $E[p^n] \times E[M]$ and $E[N]$ which is compatible with the action of $\Gal(K(N)/K)$. Since $\sigma_F $ acts as multiplication by $2^{[F:\Q_q]}$ on $E[p^n]$ and trivially on $E[M]$ and $F$ (hence also on $K$), it must lie in the center of $\Gal(K(N)/K)$.
\end{proof}

\section{Some group theory}

\begin{lemma}
For $p\neq 2$ the vector space $V := \{ A\in \Mat_2 (\F_p) | \tr A = 0 \}$ has only trivial linear subspaces that are invariant under conjugation with $\SL_2 (\F_p)$.	
\end{lemma}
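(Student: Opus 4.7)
The plan is to show that any nonzero $\SL_2(\F_p)$-invariant subspace $W \subseteq V$ must equal $V$, by producing a standard basis $e = \bigl(\begin{smallmatrix} 0 & 1 \\ 0 & 0 \end{smallmatrix}\bigr)$, $f = \bigl(\begin{smallmatrix} 0 & 0 \\ 1 & 0 \end{smallmatrix}\bigr)$, $h = \bigl(\begin{smallmatrix} 1 & 0 \\ 0 & -1 \end{smallmatrix}\bigr)$ of $V$ from an arbitrary nonzero element of $W$ by conjugating with unipotent elements of $\SL_2(\F_p)$.

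First, fix $A = \bigl(\begin{smallmatrix} a & b \\ c & -a \end{smallmatrix}\bigr) \in W \setminus \{0\}$. A direct computation shows that conjugation by $u_t := \bigl(\begin{smallmatrix} 1 & t \\ 0 & 1 \end{smallmatrix}\bigr) \in \SL_2(\F_p)$ gives $u_t A u_t^{-1} - A = t Y_1 + t^2 Y_2 \in W$, where $Y_1 = ch - 2ae$ and $Y_2 = -ce$. Evaluating this element of $W$ at $t=1$ and $t=-1$ (which are distinct since $p \neq 2$) and taking sum and difference, I obtain $2 Y_2, 2 Y_1 \in W$, hence $Y_1, Y_2 \in W$ after dividing by $2$. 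If $c \neq 0$, then $Y_2 = -ce$ yields $e \in W$, after which $Y_1 + 2ae = ch$ yields $h \in W$ as well. A symmetric argument using the lower unipotents $\bigl(\begin{smallmatrix} 1 & 0 \\ s & 1 \end{smallmatrix}\bigr)$ handles the case $b \neq 0$, giving $f, h \in W$.

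The remaining case $b = c = 0$ forces $A$ to be a nonzero scalar multiple of $h$, so $h \in W$. Conjugating $h$ by $u_1$ then yields $h - 2e$, and subtracting $h$ gives $-2e \in W$, hence $e \in W$ since $p \neq 2$. In every case $W$ contains at least one of $e, f, h$; one further conjugation then produces the missing basis elements -- for instance, conjugating $h$ by $\bigl(\begin{smallmatrix} 1 & 0 \\ 1 & 1 \end{smallmatrix}\bigr)$ gives $h + 2f$, so $f \in W$. This forces $W = V$, which is the claim.

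The only delicate point is keeping track of where $p \neq 2$ enters: it is needed to have $1 \neq -1$ in $\F_p$ so that the Vandermonde-style step separating $Y_1$ from $Y_2$ succeeds, and also to divide by $2$ at each stage to isolate a single basis vector. I do not expect any deeper obstacle; this is essentially a streamlined form of the classical proof that the adjoint representation of $\SL_2$ is irreducible in characteristic $\neq 2$.
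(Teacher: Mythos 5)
Your proof is correct, but it takes a genuinely different route from the paper's. You run the classical irreducibility-of-the-adjoint-representation argument: starting from an arbitrary nonzero $A\in W$, you conjugate by the unipotents $u_t=\bigl(\begin{smallmatrix}1&t\\0&1\end{smallmatrix}\bigr)$, separate the coefficients of $t$ and $t^2$ by evaluating at $t=\pm1$ (using $p\neq2$), and then generate the whole basis $e,f,h$ of $V$ inside $W$, so that any nonzero invariant subspace equals $V$ irrespective of its dimension. The paper argues differently: it introduces the nondegenerate pairing $\langle A,B\rangle=\tr(A^TB)$ on $V$, notes that the orthogonal complement of an invariant subspace is again invariant, and thereby reduces the two-dimensional case to the one-dimensional one; a line $\F_p A$ invariant under conjugation is then excluded by demanding that conjugation by $\bigl(\begin{smallmatrix}1&1\\0&1\end{smallmatrix}\bigr)$ send $A$ to a scalar multiple of itself, which forces the line to be spanned by $\bigl(\begin{smallmatrix}0&b\\0&0\end{smallmatrix}\bigr)$, after which one further explicit conjugation yields a contradiction. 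Your approach buys uniformity (no case split on the dimension and no bilinear form is needed), while the paper's duality trick buys a shorter computation, since only lines have to be inspected; both use $p\neq2$ essentially (you to separate $Y_1$ from $Y_2$ and divide by $2$, the paper for the nondegeneracy of the trace form on $V$ and in the analogous matrix computations). One small point of phrasing: your final claim that ``in every case $W$ contains at least one of $e,f,h$'' is weaker than what the one-extra-conjugation step requires; what your case analysis actually shows is that $h\in W$ in every case, and then the conjugates $h-2e$ and $h+2f$ of $h$ give $e,f\in W$, so the argument is complete as computed --- it is just worth stating it in that form.
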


\begin{proof}
Let $U$ be a non-trivial linear subspace of $V$ that is invariant under conjugation by $\SL_2 (\F_p)$. By considering the non-degenerate scalar product $\langle A,B \rangle := \tr (A^T B)$ on $V$ we can show that $U^\perp$ is also invariant: Let $A \in U^\perp$ and $B\in U$. Then for any $S \in \SL_2 (\F_p)$ we have $\tr((SAS^{-1})^T B) = \tr(S^{-T} (A^T S^T B)) = \tr((A^T S^T B) S^{-T} ) = \tr(A^T (S^T B S^{-T})) = \tr (A^T B^\prime)$ for some $B^\prime \in U$ since $U$ is invariant under conjugation. But then $\tr(A^T B^\prime) = 0$, hence $SAS^{-1} \in U^\perp$. \\
We know that $V$ has dimension three. Now if $U$ is an invariant linear subspace of dimension $2$, its orthogonal complement has to be of dimension one and we get that $V$ has only trivial invariant subvector spaces if and only if it does not have a one-dimensional invariant subvector space which is what we will prove now.\\

Let $U \subset V$ be invariant under conjugation by $\SL_2 (\F_p)$ and of dimension one. Then there must be a matrix $A = \begin{pmatrix} a & b \\ c & -a \end{pmatrix}$ non-zero, such that $$U = \{ 0, A, 2A, \ldots, (p-1)A \}.$$ Consider $S = \begin{pmatrix} 1 & 1 \\0 & 1 \end{pmatrix}$ and $$S \begin{pmatrix} a & b \\c & -a \end{pmatrix} S^{-1} = \begin{pmatrix} a+c & -2a-c+b \\c & -a-c \end{pmatrix} \stackrel{!}{=} \lambda \begin{pmatrix} a & b \\c & -a \end{pmatrix}.$$
So in order for $U$ to be invariant, we have to have $c=0$ and $\lambda = 1$, which also gives $a=0$. Hence $U$ must be the matrices of the form $\begin{pmatrix} 0 & b \\0 & 0 \end{pmatrix}$. Let us assume that space of matrices is invariant. Then the orthogonal complement is $$U^\perp = \left\{ \begin{pmatrix} x & 0 \\y & z \end{pmatrix} \in \Mat_2 (\F_p)\right\}.$$ But here we can again find that conjugation by $S$ does not stay within $U^\perp$. Let $A = \begin{pmatrix} 0 & 0 \\1 & 0 \end{pmatrix} \in U^\perp$. Then

$$SAS^{-1} = \begin{pmatrix} 1 & -1 \\1 & -1 \end{pmatrix} \notin U^\perp.$$

We excluded all possibilities of one-dimensional invariant linear subspaces and proved the lemma.
\end{proof}

\begin{lemma}
\label{containsSL2}
Let $p$ be as in section \ref{NotNFC}. Then $\rho(\Gal(K(p)/K))$ contains $ \SL_2 (\F_p)$.
\end{lemma}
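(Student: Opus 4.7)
The plan is to identify $\Gal(\Q(p)/\Q)$ with $\GL_2(\F_p)$ via $\rho$ using surjectivity (P2), set $H := \rho(\Gal(K(p)/K))$, and show that $H \cap \SL_2(\F_p) = \SL_2(\F_p)$ by combining the normal subgroup classification for $\SL_2(\F_p)$ with a local argument at $p$.

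First I would check that $H$ is normal in $\GL_2(\F_p)$. Since $K/\Q$ is Galois by our setup and $\Q(p)/\Q$ is Galois, the intersection $M := \Q(p) \cap K$ is Galois over $\Q$, and under restriction $\Gal(K(p)/K) \xrightarrow{\sim} \Gal(\Q(p)/M)$. The latter is normal in $\Gal(\Q(p)/\Q)$ because $M/\Q$ is Galois, so $H$ is normal in $\GL_2(\F_p)$. In particular $H \cap \SL_2(\F_p)$ is normal in $\SL_2(\F_p)$. Since $p \geq 2d+2 \geq 4$ forces $p \geq 5$, $\PSL_2(\F_p)$ is simple, and hence the only normal subgroups of $\SL_2(\F_p)$ are $\{I\}$, $\{\pm I\}$, and $\SL_2(\F_p)$. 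It therefore suffices to exhibit more than two elements of $H \cap \SL_2(\F_p)$.

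The heart of the argument is local at $p$. Because $p$ is supersingular (P1), the results in section \ref{NotNFC} (together with Lemma 3.3 of \cite{MR3090783}) give $[\Q_p(E[p]):\Q_p] = 2(p^2-1)$, with maximal unramified subextension $\Q_q = \Q_{p^2}$ and $\Q_q(E[p])/\Q_q$ totally tamely ramified of degree $p^2-1$. Consequently, for any prime of $\Q(p)$ above $p$, the corresponding decomposition group $D$ and inertia group $I$ inside $\GL_2(\F_p)$ have orders $2(p^2-1)$ and $p^2-1$ respectively, and $I$ is (conjugate to) a non-split Cartan subgroup, identified with $\F_{p^2}^* \hookrightarrow \GL_2(\F_p)$. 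On this Cartan $\det$ restricts to the norm $N : \F_{p^2}^* \to \F_p^*$, whose kernel $I \cap \SL_2(\F_p)$ is cyclic of order $p+1$.

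Finally I would translate the uniform local degree bound on $L$ into an index bound on $H$. Because $M \subseteq K \subseteq L$, the local degree $[M_w : \Q_p]$ is at most $d$ for every $w \mid p$ in $M$, which gives $[D : D \cap H] \leq d$ and hence $[I : I \cap H] \leq d$, whence $[(I \cap \SL_2(\F_p)) : (I \cap \SL_2(\F_p) \cap H)] \leq d$. Therefore
\begin{align*}
|H \cap \SL_2(\F_p)| \;\geq\; |I \cap \SL_2(\F_p) \cap H| \;\geq\; \frac{p+1}{d} \;\geq\; \frac{2d+3}{d} \;>\; 2,
\end{align*}
using (P3). The classification then forces $H \cap \SL_2(\F_p) = \SL_2(\F_p)$, proving the lemma. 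The main obstacle is the local step: justifying that for supersingular $p$ the image of inertia in $\GL_2(\F_p)$ is a non-split Cartan of order $p^2-1$ with $\det$ becoming the norm (so that the $\SL_2$-part is cyclic of order $p+1$, large enough to defeat the factor of $d$ coming from the local degree bound on $L$).
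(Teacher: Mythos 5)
Your proof is correct, but it takes a genuinely different route from the paper. Both arguments share the same skeleton: $H=\rho(\Gal(K(p)/K))$ is normal in $\rho(\Gal(K(p)/\Q))=\GL_2(\F_p)$ (you via $M=\Q(p)\cap K$ being Galois over $\Q$, the paper directly from normality of $\Gal(K(p)/K)$ in $\Gal(K(p)/\Q)$), so $H\cap\SL_2(\F_p)$ is normal in $\SL_2(\F_p)$, and one then invokes the classification of normal subgroups of $\SL_2(\F_p)$ for $p\geq 5$ (the paper cites Theorem 8.4 of \cite{MR1878556}; note that simplicity of $\PSL_2(\F_p)$ alone is not quite enough, one also needs perfectness of $\SL_2(\F_p)$, but this is the same standard fact). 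The difference is how the element outside $\{\pm I\}$ is produced. The paper uses only (P2) and the exponent half of (P3): since $\Gal(K(p)/\Q)/\Gal(K(p)/K)\cong\Gal(K/\Q)$ has exponent prime to $p$, the $\exp(\Gal(K/\Q))$-th power of the unipotent $\left(\begin{smallmatrix}1&1\\0&1\end{smallmatrix}\right)$ lands in $\rho(\Gal(K(p)/K))\cap\SL_2(\F_p)$ and is a nontrivial unipotent. You instead use (P1) and the $p\geq 2d+2$ half of (P3): supersingularity makes the inertia image at $p$ a non-split Cartan with norm-one part cyclic of order $p+1$, and the local degree bound $d$ at $p$ forces $|H\cap\SL_2(\F_p)|\geq (p+1)/d>2$. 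Your local facts are indeed available (they are essentially Lemma 3.3/3.4 of \cite{MR3090783}, already quoted in section \ref{NotNFC}; the side claim $[\Q_p(E[p]):\Q_p]=2(p^2-1)$ is more than you use and would need a word about Frobenius lying in the normalizer of the Cartan, but nothing in your bound depends on it). What each approach buys: the paper's argument is purely group-theoretic, needs no analysis of inertia, and deliberately avoids non-split Cartan subgroups, which the introduction names as a design goal of this generalization; your argument reintroduces the Cartan machinery from Habegger and additionally leans on the local degree bound at the specific prime $p$, but in exchange it does not use the exponent of $\Gal(L/\Q)$ at all, so it would survive in settings where one only controls local degrees and supersingularity at $p$.
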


\begin{proof}
By property (P2), we have $\rho(\Gal(K(p)/\Q)) = \GL_2(\F_p)$. Consider the normal subgroup $N:= \Gal(K(p)/K)$ of $\Gal(K(p)/\Q)$. Then $\Gal(K(p)/\Q)/N$,  which is isomorphic to $\Gal(K/\Q)$, has exponent $\exp(\Gal(K/\Q))$. So also $\rho(\Gal(K(p)/\Q))/\rho(N)$ has exponent dividing $\exp(\Gal(K/\Q))$. Consider $\begin{pmatrix}
	1 & 1 \\
	0 & 1
\end{pmatrix} \in \SL_2(\F_p)$. Hence $\begin{pmatrix}
	1 & 1 \\
	0 & 1
\end{pmatrix} \in \rho(\Gal(K(p)/\Q))$. We take the $\exp(\Gal(K/\Q)) $-th power of this matrix and get an element of $\rho(N)$ (recall that $\exp(\Gal(K/\Q))$ is coprime to $p$): $$\begin{pmatrix}
1 & 1 \\
0 & 1	
\end{pmatrix}^{\exp(\Gal(K/\Q))} =
\begin{pmatrix}
1 & \exp(\Gal(K/\Q)) \\
0 & 1	
\end{pmatrix} \in \rho(N) \cap \SL_2(\F_p).
$$

Since $\rho(N)$ is normal in $\rho(\Gal(K(p)/\Q))$, then also $\rho(N) \cap \SL_2(\F_p)$ is normal in $\SL_2(\F_p)$. By Theorem 8.4 of \cite{MR1878556}, the only normal subgroups of $\SL_2(\F_p)$ are $\{1\}, \{\pm 1 \}, \SL_2(\F_p)$. But since we found one element in $\rho (N) \cap \SL_2 (\F_p)$ that is not in $\{\pm 1 \}$, we get $\rho(N) \cap \SL_2 (\F_p) = \SL_2(\F_p)$. 
\end{proof}

\begin{lemma}
\label{isMat2}
Let $p\geq 3$ and $G$ be a subgroup of $\Mat_2(\F_p)$ of order $p^2$. Let $V$ be the subgroup of $\Mat_2(\F_p)$ generated by $A B A^{-1}$ where $B$ varies over $G$ and $A$ varies over $\SL_2(\F_p)$. Then $V=\Mat_2(\F_p)$.	
\end{lemma}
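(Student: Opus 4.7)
The plan is to classify the $\SL_2(\F_p)$-invariant $\F_p$-subspaces of $\Mat_2(\F_p)$ using the previous lemma, and then narrow down $V$ by its dimension.

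First, $V$ is automatically an $\F_p$-subspace, since $\Mat_2(\F_p)$ has additive exponent $p$, and it is stable under $\SL_2(\F_p)$-conjugation by construction. Since $p$ is odd, the projection $M \mapsto \tfrac{\tr M}{2}\, I$ is well-defined and yields a direct sum decomposition $\Mat_2(\F_p) = \F_p \cdot I \oplus V_0$ with $V_0 = \{A : \tr A = 0\}$, and both summands are conjugation-stable. The previous lemma says that $V_0$ is simple as an $\SL_2(\F_p)$-module, while $\F_p \cdot I$ is obviously simple; since they have different dimensions they are non-isomorphic, and standard semisimple module theory then implies that the $\SL_2(\F_p)$-invariant subspaces of $\Mat_2(\F_p)$ are exactly $\{0\}$, $\F_p \cdot I$, $V_0$, and $\Mat_2(\F_p)$, of $\F_p$-dimensions $0, 1, 3, 4$.

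Because $V \supset G$ and $|G| = p^2$, we have $\dim_{\F_p} V \geq 2$, forcing $V \in \{V_0, \Mat_2(\F_p)\}$. The remaining task is to rule out $V = V_0$, that is, to exhibit an element of $G$ outside the trace-zero hyperplane. I expect this to be the main obstacle: the cardinality hypothesis $|G| = p^2$ alone leaves open the possibility $G \subset V_0$ (which has $\F_p$-dimension $3$), so additional input about the specific structure of $G$ is needed, namely the existence of some $B \in G$ with $\tr B \neq 0$. In the intended Galois-theoretic context this will come from the nontriviality of the cyclotomic character on $G$, reflected on the matrix side as the fact that an element of determinant $1 + p^{n-1}t \not\equiv 1 \pmod{p^n}$ corresponds, under the isomorphism $I + p^{n-1}\Mat_2(\F_p) \cong \Mat_2(\F_p)$, to a matrix of nonzero trace $t$. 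Once such $B$ is produced, for any $A \in \SL_2(\F_p)$ the element $ABA^{-1} - B$ lies in $V \cap V_0$, and one checks by a single explicit choice of $A$ (e.g.\ $A = \begin{pmatrix} 1 & 1 \\ 0 & 1 \end{pmatrix}$) that it is nonzero; by simplicity of $V_0$ the $\SL_2(\F_p)$-orbit of this element then spans $V_0$, so $V \supset V_0$, and combined with $B \notin V_0$ we conclude $V = \Mat_2(\F_p)$.
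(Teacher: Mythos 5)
Your analysis is essentially right, and it is worth saying clearly: the obstacle you single out is genuine. The hypothesis $|G|=p^2$ alone does not yield the conclusion; for example $G=\left\{\begin{pmatrix}0&b\\ c&0\end{pmatrix}: b,c\in\F_p\right\}$ has order $p^2$ and consists of trace-zero matrices, so all its $\SL_2(\F_p)$-conjugates lie in the trace-zero hyperplane $V_0$ and $V=V_0\neq\Mat_2(\F_p)$. The paper's own proof runs exactly as yours up to the point where $V\supseteq V_0$ is forced (it picks a non-scalar $B\in G$, which exists since $|G|>p$, produces a nonzero element $ABA^{-1}-B$ of $V\cap V_0$, and invokes the preceding irreducibility lemma), and then simply asserts that ``for $p>2$ the identity matrix is an element of $V$'' with no justification -- that assertion is precisely the missing hypothesis you identified. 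It is supplied only at the point of application: in the proof of Lemma \ref{Lemma6.2} the paper explicitly verifies that $\Le(G)$ contains a nonzero scalar matrix before invoking Lemma \ref{isMat2}. So your diagnosis matches how the lemma is actually used, and your route (decomposing $\Mat_2(\F_p)=\F_p I\oplus V_0$ and classifying the $\SL_2(\F_p)$-submodules) is a slightly more module-theoretic packaging of the same content; the paper instead concludes from $V\supsetneq V_0$ that $|V|>p^3$ and hence $V=\Mat_2(\F_p)$.

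One small repair to your closing step: in the intended application the trace-nonzero element $B$ is a \emph{scalar} matrix, so $ABA^{-1}-B=0$ for every $A$, and your explicit check with $A=\begin{pmatrix}1&1\\0&1\end{pmatrix}$ produces nothing. But that step is superfluous: given your classification and $\dim_{\F_p}V\geq 2$, the mere existence of $B\in G\subset V$ with $\tr B\neq 0$ already rules out $V=V_0$ and forces $V=\Mat_2(\F_p)$. If you do want a nonzero element of $V\cap V_0$ directly, take a non-scalar element of $G$ (it exists because $|G|>p$), which is exactly what the paper does.
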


\begin{proof}
Since $G$ has more than $p$ elements, there must be a non-scalar matrix in $G$. So let $B\in G$ be a non-scalar matrix. Then since scalar matrices are the only matrices that commute with all elements on $\SL_2(\F_p)$, there must be $A\in \SL_2(\F_p)$ such that $A B A^{-1} \neq B$. Then $\tr (A B A^{-1} - B) = \tr (A B A^{-1}) - \tr (B) = \tr(B) - \tr (B) = 0$ and $V^0 := \{ B \in V | \tr (B) = 0 \} \neq \{ 0 \}$. Since the action by conjugation of $\SL_2 (\F_p)$ on $\{ B \in \Mat_2 (\F_p) | \tr (B) = 0 \} $ leaves only the trivial subvector spaces invariant and $V^0$ is not just the zero vector, we find that $V^0 = \{ B \in \Mat_2(\F_p) | \tr (B) = 0 \}$ which has dimension $3$. Now since for $p > 2$ the identity matrix is an element of $V$, but not of $V^0$, we have $V \supsetneq V^0$.  But since $V^0$ is an $\F_p$-vector space of dimension $3$ (hence has order $p^3$) and $V$ is strictly larger than $V^0$ (hence has to have order strictly larger than $p^3$), we get $V=\Mat_2(\F_p)$.
\end{proof}

\section{The actual descent}

\begin{lemma}
\label{Lemma6.2}
Let $G := \Gal(F(N)/F(N/p))$. Suppose $E$ and $p$ satisfy (P1) and (P2). We assume $p^2\mid N$. Then:
\begin{itemize}
\item[(i)] The subgroup of $H := \Gal(K(N)/K)$ generated by the conjugates of $G$ equals $\Gal(K(N)/K(N/p))$.
\item[(ii)] If $\alpha\in K(N)$ with $\sigma(\alpha)\in F(N/p)$ for all $\sigma\in\Gal(K(N)/K)$, then $\alpha\in K(N/p).$
\end{itemize}
\end{lemma}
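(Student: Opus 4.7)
My plan is to translate both parts to the faithful action of $\Gal(K(N)/K(N/p))$ on the subquotient $E[p^n]/E[p^{n-1}]\cong(\F_p)^2$. Writing elements of the kernel of $\Aut(E[p^n])\to\Aut(E[p^{n-1}])$ in the form $I+p^{n-1}M$ identifies $\Gal(K(N)/K(N/p))$ with an additive subgroup of $\Mat_2(\F_p)$, since for $n\geq 2$ the cross terms $p^{2n-2}MM'$ in the matrix product vanish. To begin, I will realise $G$ inside $H$: by our choice of $F$ we have $K\subseteq F$ inside $\overline{\Q_p}$, so $F(N)=F\cdot K(N)=F(N/p)\cdot K(N)$, and restriction gives an injection $G\hookrightarrow\Gal(K(N)/K(N/p))\subseteq H$. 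By Lemma \ref{Lemma3.4}(iv) the image $\bar G$ has order $p^2$.

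For part (i), I first observe that $\Gal(K(N)/K(N/p))$ is normal in $H$ because $K(N/p)/K$ is Galois, so the normal closure $U$ of $G$ in $H$ is automatically contained in $\Gal(K(N)/K(N/p))$. For the reverse inclusion, (P2) together with Lemma \ref{containsSL2} gives $\SL_2(\F_p)\subseteq\rho(\Gal(K(p)/K))$; lifting through the restriction $H\twoheadrightarrow\Gal(K(p)/K)$ yields, for each $S\in\SL_2(\F_p)$, an element $s\in H$ whose action on $E[p^n]/E[p^{n-1}]$ equals $S$. Conjugation by such an $s$ sends $I+p^{n-1}M$ to $I+p^{n-1}(SMS^{-1})$, so the image of $U$ in $\Mat_2(\F_p)$ contains $S\bar GS^{-1}$ for every $S\in\SL_2(\F_p)$. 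Lemma \ref{isMat2} then forces this image to be all of $\Mat_2(\F_p)$. Since $\Gal(K(N)/K(N/p))$ itself injects into $\Mat_2(\F_p)$, the inclusion $U\subseteq\Gal(K(N)/K(N/p))$ must be an equality.

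Part (ii) will then follow formally from (i). Given $\alpha\in K(N)$ with $\sigma(\alpha)\in F(N/p)$ for every $\sigma\in H$, any $\tau\in G$ fixes $F(N/p)$ pointwise, so
\begin{equation*}
(\sigma\tau\sigma^{-1})(\alpha)=\sigma\bigl(\tau(\sigma^{-1}\alpha)\bigr)=\sigma(\sigma^{-1}\alpha)=\alpha
\end{equation*}
for every $\sigma\in H$ and $\tau\in G$. Hence every $H$-conjugate of $G$ fixes $\alpha$, and therefore so does the subgroup they generate; by (i) this subgroup is $\Gal(K(N)/K(N/p))$, and the Galois correspondence yields $\alpha\in K(N/p)$.

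The main obstacle I expect is the dictionary in part (i): verifying that the restriction $G\to\Gal(K(N)/K(N/p))$ is injective (which reduces to $F(N)=F(N/p)\cdot K(N)$), and then that conjugation in $H$ by my chosen $\SL_2$-lifts really corresponds, under the identification $I+p^{n-1}M\leftrightarrow M$, to ordinary matrix conjugation in $\Mat_2(\F_p)$. Once this translation is in place, Lemma \ref{isMat2} delivers (i) directly and part (ii) is immediate.
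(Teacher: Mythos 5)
Your proposal follows essentially the same route as the paper — the ``logarithm'' identification of $\Gal(K(N)/K(N/p))$ with an additive subgroup of $\Mat_2(\F_p)$ via $I+p^{n-1}M$, lifts of $\SL_2(\F_p)$ coming from Lemma \ref{containsSL2}, the conjugation dictionary $\Le(\sigma\psi\sigma^{-1})=\tilde\rho(\sigma)\Le(\psi)\tilde\rho(\sigma)^{-1}$, and the purely formal deduction of (ii) from (i) — and those parts are fine (your justification that $G$ restricts injectively into $\Gal(K(N)/K(N/p))$ via $F(N)=F(N/p)\cdot K(N)$ is even a bit more explicit than the paper's). But there is one genuine gap: you invoke Lemma \ref{isMat2} on the image $\bar G$ of $G$ with no further input. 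That lemma, as used, needs to know that $\bar G$ is not contained in the trace-zero hyperplane — concretely, that it contains a non-zero scalar matrix; this hypothesis is missing from its statement, but its proof relies on it (the step ``the identity matrix is an element of $V$''), and without it the conclusion fails: if $\bar G$ were, say, the order-$p^2$ group of matrices with zero diagonal, every conjugate $S\bar GS^{-1}$ would still have trace zero, and the group generated would be the trace-zero subspace of order $p^3$, not $\Mat_2(\F_p)$. So ``Lemma \ref{isMat2} then forces this image to be all of $\Mat_2(\F_p)$'' does not yet follow.

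This is exactly where the paper's proof does substantial extra work: it states that ``for applying Lemma \ref{isMat2} we have to prove that $\Le(G)$ contains a non-zero scalar matrix'' and then constructs one. Using Lemma \ref{Lemma3.4}(v), the image of $\Gal(F(p^n)/F)\to\Aut(E[p^n])$ contains multiplication by $M^{[F:\Q_q]}$ with $M$ a generator of $(\Z/p^n\Z)^*$; raising a preimage to the power $(q-1)p^{n-2}$ and using the structure of $\Gal(F(p^n)/F)$ (as in the proof of Lemma \ref{Lemma3.5}) gives a non-trivial element of $\Gal(F(p^n)/F(p^{n-1}))$ acting as a scalar on $E[p^n]$, which is transported to $G=\Gal(F(N)/F(N/p))$ by the isomorphism of Lemma \ref{Lemma3.4}(iv); its image under $\Le$ is then a non-zero scalar matrix (of non-zero trace, since $p$ is odd). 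Your proof needs this step, or some other argument excluding $\bar G\subset\{A:\tr A=0\}$, before the conjugates of $\bar G$ can be shown to generate all of $\Mat_2(\F_p)$ and hence before the counting argument $|H|=p^4=[K(N):K(N/p)]$ closes part (i).
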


\begin{proof}
(i) We will follow closely the proof of Habegger's Lemma 6.2 but we will not use the concept of non-split Cartan subgroups as in Habegger's proof. Instead we will use the lemmas above to show that the group $H$ is large enough.

We have
\begin{align}
\label{6.3}
H  \subset \Gal(K(N)/K(N/p))
\end{align}
and we will show the equality.\\

We now want to look at the Galois representations and choose a basis for each $E[N]$ that is compatible with the diagram below. Let

$$\tilde{\rho} : \Gal(K(p)/K) \to \GL_2 (\F_p) \text{ and } \rho: \Gal(K(N)/K) \to \GL_2 (\Z/p^n\Z)$$ and put them into the following commutative diagram:

\begin{align}\label{Bild6.4}
\begin{xy}\xymatrix{
&&\Gal(K(N)/K) \ar[d] \ar[r]^\rho & \GL_2 (\Z/p^n\Z) \ar[d] \\
&&\Gal(K(N/p)/K) \ar[d] \ar[r]^{\rho|_{K(N/p)}} & \GL_2 (\Z/p^{n-1}\Z)\ar[d]\\
&&\Gal(K(p)/K) \ar[r]^{\tilde{\rho}} & \GL_2 (\F_p)
} \end{xy}
\end{align}

The right vertical arrows are the natural surjections and the left vertical arrows are induced by the restrictions. By Lemma \ref{containsSL2} we know that $\tilde{\rho}(\Gal(K(p)/K))$ contains $\SL_2(\F_p)$. We will now construct a homomorphism $\Le$ from $\Gal(K(N)/K(N/p))$ to $\Mat_2 (\F_p)$ which will firstly show by its injectivity that $|H| \leq p^4$ and secondly, through Lemma \ref{isMat2}, show equality.\\

If $\sigma\in\Gal(K(N)/K(N/p))$ then $\rho (\sigma)$ is represented by $1+p^{n-1} \Le^\prime (\sigma)$ with $\Le^\prime (\sigma)\in\Mat_2 (\Z)$. Moreover, $\Le^\prime (\sigma)$ is well-defined modulo $p\Mat_2 (\Z)$. We obtain by reduction mod $p$ a "logarithm" $\Le : \Gal (K(N)/K(N/p)) \to \Mat_2 (\F_p)$. The name comes from the following property: Let $\sigma_1, \sigma_2 \in \Gal (K(N)/K(N/p))$, then

\begin{center}
$\rho (\sigma_1 \sigma_2) = (1+p^{n-1} \Le (\sigma_1))(1+p^{n-1} \Le (\sigma_2)) \equiv 1+p^{n-1}(\Le (\sigma_1)+\Le (\sigma_2)) \mod p^n \Mat_2(\Z)
$\end{center} where we let $\Le$ be the reduction of $\Le^\prime$ modulo $p\Mat_2 (\Z)$. So $\Le(\sigma_1\sigma_2)=\Le(\sigma_1) + \Le(\sigma_2)$, hence $\Le$ is a group homomorphism. We want to show that $\Le$ is injective. Let $\sigma \in \Gal(K(N)/K(N/p))$ be such that $\Le (\sigma) = \id$ in $\Mat_2 \F_p$. This means that $\rho(\sigma) = 1$ in $\GL_2 (\Z/p^n \Z)$. We look at the diagram $\eqref{Bild6.4}$ and see that this means that $\sigma$ fixes $K(p^n)$. Since it is an element of $\Gal(K(N)/K(N/p)$, it also fixes $K(N/p)$, hence fixes $K(N)$. So $\sigma \in \Gal(K(N)/K(N/p))$ is the identity and $\Le$ is injective.

Hence we get
\begin{align}
[K(N):K(N/p)] \leq |\Mat_2(\F_p)| = p^4. \label{6.7}
\end{align}

Remark that since the Galois extension $K(N/p)/K$ is normal, the Galois group $\Gal(K(N)/K(N/p))$ is a normal subgroup of $\Gal(K(N)/K)$, hence if $\sigma\in \Gal(K(N)/K)$ and $\psi \in G$ we have $\sigma\psi\sigma^{-1}\in\Gal(K(N)/K(N/p))$. Then

\begin{align*}
\rho (\sigma\psi\sigma^{-1}) \equiv 1+p^{n-1}\tilde{\rho}(\sigma) \Le^\prime(\psi)\tilde{\rho}(\sigma)^{-1} \mod p^n\Mat_2(\Z).
\end{align*}
Hence 
\begin{align}
\Le(\sigma\psi\sigma^{-1}) = \tilde{\rho}(\sigma) \Le(\psi)\tilde{\rho}(\sigma)^{-1}. \label{equation6.8}
\end{align}
By Lemma \ref{Lemma3.4} (iv), $G$ has order $p^2$, so $|\Le (G)| = p^2$ by injectivity of $\Le$.\\

We recall that by Lemma \ref{containsSL2} the image of $\tilde{\rho}$ contains $\SL_2 (\F_p)$. So the definition of $H$ and equation \eqref{equation6.8} imply that conjugating a matrix in $\Le(G)$ by an element of $\SL_2(\F_p)$ stays within $\Le(H)$. We want to apply Lemma \ref{isMat2} to $\Le(G)$ to deduce $\Le(H) = \Mat_2(\F_p)$. Then $|\Le(H)| = |H| = p^4$ and by \eqref{6.7}, $H$ has to be equal to $\Gal(K(N)/K(N/p))$. For applying Lemma \ref{isMat2} we have to prove that $\Le (G)$ contains a non-zero scalar matrix:\\

By Lemma \ref{Lemma3.4} (v) we know that the image of the Galois representation \newline $\Gal(F(p^n)/F) \to \Aut E[p^n]$ contains multiplication by $M^{[F:\Q_q]}$ for any $M$ coprime to $p$. We now want to construct an element in $\Gal(F(p^n)/F(p^{n-1}))$ whose image is scalar multiplication. Let $M$ be a generator of $(\Z/p^n\Z)^*$, then the multiplication by $M$ will have order $(p-1)p^{n-1}$ in $\Aut E[p^n]$.\\

Now by Lemma \ref{Lemma6.4} we know that there exists $\sigma_F^\prime\in\Gal(F(p^n)/F)$ such that its image is the multiplication by $M^{[F:\Q_q]}$. We want to show that $\sigma_F^{\prime (q-1)p^{n-2}} $ is an element of $\Gal(F(p^n)/F(p^{n-1}))$ and that it is not trivial. We start with the non-triviality. Remark that $\Gal(F(p^n)/F)/\Gal(F(p^n)/F(p^{n-1})) \cong \Gal(F(p^{n-1})/F)$. Since $\Gal(F(p^n)/F)$ is isomorphic to $A\times (\Z/p^{n-1}\Z)^2$ where $A$ is a subgroup of $\Z/(q-1)\Z$ (see the proof of Lemma \ref{Lemma3.5}), its exponent will be $ap^{n-1}$ where $a\mid (q-1)$. But $[F:\Q_q]$ is not a multiple of $ap^{n-1}$ since $[F:\Q_q]$ is coprime to $p$. Hence $\sigma_F^{\prime (q-1)p^{n-2}}$ is not trivial.\\

Now consider $\Gal(F(p^{n-1})/F)$. This group is isomorphic to $A\times (\Z/p^{n-2}\Z)^2$ where $A$ is a subgroup of $\Z/(q-1)\Z$ and its exponent will be $ap^{n-2}$ where $a\mid (q-1)$. On the other hand, $[F:\Q_q](q-1)p^{n-2}$ is now a multiple of $(q-1)p^{n-2}$, hence the restriction of $\sigma_F^{\prime (q-1)p^{n-2}}$ to $F(p^{n-1}))$ is trivial which means that it has to be in $\Gal(F(p^n)/F(p^{n-1}))$.\\

By Lemma \ref{Lemma3.4} (iv), we have $\Gal(F(p^n)/F(p^{n-1}))\cong \Gal(F(N)/F(N/p))$, hence we can find $\sigma_F \in \Gal(F(N)/F(N/p))$ that gets mapped to $\sigma_F^\prime$ under that isomorphism. We can apply $\Le$ to find that $\Le(\Gal(F(N)/F(N/p)))$ contains an element that acts as scalar multiplication on the $p^n$-torsion points, hence has to be a scalar matrix.\\

(ii) Now we proceed with the second part. Let $\alpha\in K(N)$ with $\sigma(\alpha)\in F(N/p)$ for all $\sigma\in\Gal(K(N)/K)$. Since we can invert elements of Galois groups, it makes sense to consider $\sigma^{-1}$ whenever $\sigma\in\Gal(K(N)/K)$ and with the first part of the Lemma we get that the group generated by $\sigma\psi\sigma^{-1}$ equals $\Gal(K(N)/K(N/p))$. Since $\alpha$ is fixed by such a $\sigma\psi\sigma^{-1}$, it has to be in $K(N/p)$ which is what we wanted to show.
\end{proof}

The technique of the descent used in the following theorem has been developed by Amoroso and Zannier in Section 4 of \cite{MR2651944}.

\begin{theorem}
\label{Proposition6.1}
Let $E$ be an elliptic curve over $\Q$ without complex multiplication. Let $L$ be a Galois extension of $\Q$. Suppose there exists $d\in\N$ such that $L$ has uniformly bounded local degrees above all but finitely many primes where $d$ is the said uniform bound. Then there is a prime number $p$ satisfying \eqref{P1}, \eqref{P2}, \eqref{P3} and \eqref{P4}. If $\alpha\in L(E_{\text{tor}})^* \backslash \mu_\infty$, then
\begin{align}
\label{Equation6.11}
h(\alpha) \geq \frac{(\log p)^4}{p^{5p^4}}.
\end{align}
\end{theorem}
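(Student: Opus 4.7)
The plan is to combine the tame-case bound of Lemma~6.3 and the wild-case bound of Lemma~5.3 via an iterated descent powered by Lemma~6.2.

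First, I would establish existence of a prime $p$ satisfying (P1)--(P4). Since $L/\Q$ has uniformly bounded local degrees, Checcoli's Theorem gives $\exp(\Gal(L/\Q)) < \infty$; combined with Serre's theorem on surjectivity of the mod-$p$ Galois representation for the non-CM elliptic curve $E$ (which fails only for finitely many $p$) and the infinitude of supersingular primes (Elkies), infinitely many primes $p$ satisfy (P1)--(P4), and the author's earlier paper \cite{2017arXiv171204214F} provides an explicit upper bound for such a $p$.

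Given $\alpha \in L(E_\text{tor})^* \setminus \mu_\infty$, I would choose a finite Galois subextension $K \subset L$ of $\Q$ and an integer $N = p^n M$ with $\gcd(p,M) = 1$ such that $\alpha \in K(N)$, and build $F \supset \Q_q$ as in Section~3. If $n \leq 1$, apply Lemma~6.3 directly: with $\mathcal{E} \leq [F:\Q_p](q-1) < p^3$ (using $[F:\Q_p] \leq 2d < p$ from (P3)) and $q = p^2$, a routine estimate shows the resulting bound dominates $(\log p)^4/p^{5p^4}$. If $n \geq 2$, I would check whether some $\sigma \in \Gal(K(N)/K)$ satisfies $\sigma(\alpha^q) \notin F(N/p)$. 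If yes, Lemma~5.3 applied to $\sigma(\alpha)$ (whose height equals $h(\alpha)$) yields $h(\alpha) \geq (\log p)^4/(4 \cdot 10^6 p^{32})$, also dominating the target. If no, $\sigma(\alpha^q) \in F(N/p)$ for every $\sigma$, and Lemma~6.2(ii) applied to $\alpha^q$ forces $\alpha^q \in K(N/p) = K(p^{n-1} M)$. I would then descend, replacing $(\alpha, N)$ by $(\alpha^q, N/p)$ and iterating the analysis. After $j \leq n - 1$ descent steps the argument terminates either in the tame case or in Subcase~A, producing $h(\alpha) \geq B/q^j$ where $B$ is the corresponding bound.

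The main obstacle is bounding the accumulated factor $q^j = p^{2j}$ so that the target exponent $5p^4$ is achieved uniformly in $n$. Since $B \geq (\log p)^4/p^{O(p^3)}$ in both the tame and Subcase~A situations, the target $(\log p)^4/p^{5p^4}$ is reached precisely when $j \leq O(p^4)$. The technical crux is therefore to verify that the descent either terminates within $O(p^4)$ iterations or that the accumulated factor can be absorbed into the exponent $5p^4$ by careful estimation. This is where the group-theoretic preparation of Section~7 and the orbit-size estimates from Lemmas~5.2 and~6.2(i) (giving $|\Gal(K(N)/K(N/p))| \leq p^4$) come into play, combined with an Amoroso--Zannier-style bookkeeping argument adapted to the current setting.
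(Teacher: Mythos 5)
There is a genuine gap, and it sits exactly where you locate the ``technical crux.'' Your descent replaces $(\alpha,N)$ by $(\alpha^q,N/p)$ whenever every conjugate of $\alpha^q$ lies in $F(N/p)$, and each such step multiplies the height by $q$, so after $j$ steps you only get $h(\alpha)\geq B/q^{j}$ with $j$ possibly as large as $n-1$. But $n=\operatorname{ord}_p(N)$ is \emph{not} bounded in terms of $p$ and $d$: the bound in the theorem must be uniform over all $\alpha\in L(E_{\text{tor}})^*\setminus\mu_\infty$, hence over all $N$. Nothing in Section~7, in Lemma~\ref{Lemma5.2}, or in Lemma~\ref{Lemma6.2}(i) limits the \emph{number} of descent steps; those results control the size of each group $\Gal(K(N)/K(N/p))$ (at most $p^4$), not how many levels the descent traverses, and there is no reason the process should stop after $O(p^4)$ iterations. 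So the accumulated factor $q^{j}$ cannot be absorbed into $p^{5p^4}$, and the proposal as written does not yield a bound independent of $N$.

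The paper's proof avoids any iterated, height-losing descent by the Amoroso--Zannier twist, which your outline never uses. One forms, \emph{once}, the auxiliary element $\gamma=\sigma_F(\alpha)/\alpha^{4^{[F:\Q_q]}}$, where $\sigma_F$ is the automorphism of Lemma~\ref{Lemma6.4}: it lies in the center of $\Gal(K(N)/K)$, acts on $E[p^n]$ as multiplication by $2^{[F:\Q_q]}$ and on $\mu_{p^n}$ as $\zeta\mapsto\zeta^{4^{[F:\Q_q]}}$. The only height inflation is the fixed factor $4^{[F:\Q_q]}+1$, independent of $n$. One then takes the \emph{least} $n'$ with $\sigma(\gamma)\in F(p^{n'}M)$ for all $\sigma$, and Lemma~\ref{Lemma6.2}(ii) puts $\gamma\in K(p^{n'}M)$ in a single step, with no height loss. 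If $n'\leq 1$ one applies the tame bound (Lemma~\ref{Lemma6.3}) to $\gamma$; if $n'\geq 2$, the decisive point is to verify the hypothesis of Lemma~\ref{Lemma5.3} for a conjugate $\sigma(\gamma)$, i.e.\ that $\sigma(\gamma)^q\notin F(p^{n'-1}M)$. This is proved by contradiction using the specific shape of $\gamma$: a failure produces $\psi$ with $\psi(\sigma(\gamma))=\xi\sigma(\gamma)$, $\xi^q=1$, $\xi\neq 1$; writing $\eta=\psi(\sigma(\alpha))/\sigma(\alpha)$ one gets $\xi=\sigma_F(\eta)/\eta^{4^{[F:\Q_q]}}$, shows $\eta\in\mu_\infty$, and then Lemma~\ref{Lemma3.5}, Lemma~\ref{Lemma3.6} and the Weil-pairing action of $\sigma_F$ force $\xi^{\widetilde M}=1$ for some $\widetilde M$ coprime to $q$, whence $\xi=1$, a contradiction. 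Without this construction (or some substitute playing the same role), checking the hypothesis of Lemma~\ref{Lemma5.3} for $\alpha$ itself and falling back on powers of $\alpha$ when it fails cannot close the argument; supplying $\sigma_F$, $\gamma$, and the root-of-unity contradiction is the essential missing content, not bookkeeping.
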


\begin{proof}
Again, we follow here the analogous proof of Proposition 6.1 of \cite{MR3090783} closely. Since $E$ does not have complex multiplication, its $j$-invariant is neither $0$, nor $1728$. So the reduction of $E$ at $p$ is an elliptic curve with $j$-invariant neither $0$, nor $1728$ for all but finitely many primes $p$. By a Theorem of Serre \cite{MR0387283}, all but finitely many of these $p$ satisfy (P2). Furthermore, by \cite{MR903384}, there are infinitely many supersingular primes for an elliptic curve over $\Q$. We may thus fix a prime $p$ satisfying (P1), (P2), (P3) and (P4) and set $q=p^2$.\\

Recall the following facts thet we fixed in the beginning of the chapter:
Let $\alpha \in L(E_{\text{tor}})^* \backslash \mu_\infty $. Then $\alpha \in K(N)$ for some $N=p^n M$ with $M\in\N$ coprime to $p$, $n$ a nonnegative integer and $K \subset L$ a number field that is Galois over $\Q$. Let $\Q_q$ be the unique quadratic unramified extension of $\Q_p$. Then we fix a finite Galois extension $F/\Q_q$ with $\Q_q \subset F \subset \overline{\Q_p}$ such that the $v$-adic completion of $K$ is contained in $F$ (where $v$ extends $p$) and $[F:\Q_p]$ is uniformly bounded by $d$. Let furthermore $\mathcal{E} = (q-1)[F:\Q_q] \exp(\Gal(L/\Q))$.\\

We take $\sigma_F\in \Gal(F(N)/F)$ as in Lemma \ref{Lemma6.4}. If we are in the case of $p^2 \nmid N$, we can artificially choose an element in $\Gal(F(p^2N)/F)$ and restrict it to $F(N)$. Then we define
\begin{align}
\label{Equation6.12}
\gamma = \frac{\sigma_F (\alpha)}{\alpha^{4^{[F:\Q_q]}}} \in K(N).
\end{align}
By the properties of the height we get
\begin{align}
\label{Equation6.13}
h(\gamma) \leq h(\sigma_F (\alpha)) + h(\alpha^{4^{[F:\Q_q]}}) = (4^{[F:\Q_q]} +1) h(\alpha).
\end{align}

Let us start with the case of $n\geq 2$, hence $p^2 \mid N$. Since $\gamma\in K(N) \subset F(N)$, hence $\sigma (\gamma) \in K(N) \subset F(N)$ for all $\sigma\in\Gal(K(N)/K)$, there is a least integer $n^\prime \leq n$ such that $\sigma(\gamma)\in F (p^{n^\prime}M)$ for all $\sigma\in\Gal(K(N)/K)$. Lemma \ref{Lemma6.2} implies that then also $\gamma\in K(p^{n^\prime}M)$.\\

By minimality of $n^\prime$ there is a $\sigma\in\Gal(K(N)/K)$ such that $\sigma(\gamma)\notin F (p^{n^\prime -1}M)$.  We will split this up into two cases: First $n^\prime \geq 2$ and second $n^\prime \leq 1$. We start with $n^\prime \geq 2$. We apply $\sigma$ to \eqref{Equation6.12} and obtain
\begin{align}
\label{Equation6.14}
\sigma(\gamma) = \frac{\sigma(\sigma_F (\alpha))}{\sigma(\alpha^{4^{[F:\Q_q]}})} = \frac{\sigma_F(\sigma (\alpha))}{\sigma(\alpha)^{4^{[F:\Q_q]}}}
\end{align}
since $\sigma_F$ lies in the center of $\Gal(K(N)/K)$ by Lemma \ref{Lemma6.4}.

Next we want to apply Lemma \ref{Lemma5.3} to $\sigma(\gamma)$, so we must verify that $\sigma(\gamma)^q\notin F(p^{n^\prime -1}M)$. We will show this by contradiction, so assume $\sigma(\gamma)^q\in F(p^{n^\prime -1}M)$.

Since $\sigma(\gamma)\notin F (p^{n^\prime -1}M)$, there is $\psi \in \Gal(F (N)/F(p^{n^\prime-1}M))$ such that $\psi(\sigma(\gamma)) \neq \sigma(\gamma)$. Furthermore, $\psi(\sigma(\gamma)^q) = \sigma(\gamma)^q$ by our assumption and so
\begin{align}
\label{Equation6.15}
\psi(\sigma(\gamma)) = \xi \sigma(\gamma) \text{ for some } \xi^q = 1 \text{ while } \xi \neq 1.
\end{align}

We apply $\psi$ to equation \eqref{Equation6.14} and obtain
\begin{align*}
\psi(\sigma(\gamma)) & = \frac{\psi(\sigma_F(\sigma (\alpha)))}{\psi(\sigma(\alpha)^{4^{[F:\Q_q]}})}\\
& = \frac{\sigma_F(\psi (\sigma(\alpha)))}{\psi(\sigma(\alpha))^{4^{[F:\Q_q]}}} 
\end{align*}
since $\psi$ commutes with $\sigma_F$ (by Lemma \ref{Lemma6.4}). We define $\eta = \frac{\psi(\sigma(\alpha))}{\sigma(\alpha)} \neq 0$ and get, by \eqref{Equation6.14} and \eqref{Equation6.15},
\begin{align}
\xi & = \frac{\psi(\sigma(\gamma))}{\sigma(\gamma)} \nonumber\\
& = \frac{\psi\left( \frac{\sigma_F(\sigma(\alpha))}{\sigma(\alpha)^{4^{[F:\Q_q]}}} \right)}{\frac{\sigma_F(\sigma(\alpha))}{\sigma(\alpha)^{4^{[F:\Q_q]}}}} \nonumber\\
& = \frac{\psi(\sigma_F(\sigma(\alpha)))\sigma(\alpha)^{4^{[F:\Q_q]}}}{\psi(\sigma(\alpha))^{4^{[F:\Q_q]}} \sigma_F (\sigma(\alpha))} \nonumber\\
& = \frac{\sigma(\alpha)^{4^{[F:\Q_q]}}} {\psi(\sigma(\alpha))^{4^{[F:\Q_q]}}} \frac{\psi(\sigma_F(\sigma(\alpha)))}{\sigma_F (\sigma(\alpha))} \nonumber\\ 
& = \frac{\sigma(\alpha)^{4^{[F:\Q_q]}}}{\psi(\sigma(\alpha))^{4^{[F:\Q_q]}}} \frac{\sigma_F(\psi(\sigma(\alpha)))}{\sigma_F (\sigma(\alpha))} \nonumber\\
& = \eta^{-4^{[F:\Q_q]}} \sigma_F(\eta)\nonumber\\
& = \frac{\sigma_F(\eta)}{\eta^{4^{[F:\Q_q]}}} \label{XiEqualsXiPrime}.
\end{align}
Since $\xi$ is a root of unity, we have $$4^{[F:\Q_q]} h(\eta) = h(\eta^{4^{[F:\Q_q]}}) = h(\xi \eta^{4^{[F:\Q_q]}}) = h(\sigma_F (\eta)) = h(\eta),$$ so $h(\eta) = 0$ and by Kronecker's Theorem, $\eta$ is a root of unity.

We now fix $\widetilde{M}\in\N$ coprime to $p$ such that $\eta^{\widetilde{M}}\in\mu_{p^\infty}$. Lemma \ref{Lemma3.5} now implies that $\eta^{\widetilde{M}}$ is already in $\mu_{p^n}$ and by Lemma \ref{Lemma6.4} we have $\sigma_F (\eta^{\widetilde{M}}) =(\eta^{\widetilde{M}})^{4^{[F:\Q_q]}}$. And we get
\begin{align}
\sigma_F (\eta) = \xi^\prime \eta^{4^{[F:\Q_q]}}
\end{align}
for some $\xi^\prime$ such that $\xi^{\prime{\widetilde{M}}} = 1$. Using equation \eqref{Equation6.14} and \eqref{XiEqualsXiPrime} we get that $\xi = \xi^\prime$ and hence $\xi^q = \xi^{\widetilde{M}} = 1$. But since $\widetilde{M}$ and $q$ are coprime we must have $\xi = 1$ which is a contradiction. So our assumption on $\sigma(\gamma)^q$ is false and we get $\sigma(\gamma)^q \notin F (p^{n^\prime-1}M)$. Hence we can apply Lemma \ref{Lemma5.3} and get the following lower bound for the height of $\sigma(\gamma)$
\begin{align}
\frac{(\log p)^4}{4\cdot 10^6 p^{32}} \leq  h(\sigma (\gamma)) = h(\gamma) \leq (4^{[F:\Q_q]}+1) h(\alpha) \label{ramifiedHeight}.
\end{align}
This was the case of $n^\prime \geq 2$. Let us now assume that $n^\prime \leq 1$, which allows us to descent to the tamely ramified case. Recall that $\gamma = \frac{\sigma_F (\alpha)}{\alpha^{4^{[F:\Q_q]}}}$. We do a descent as in the totally ramified case: There is a least integer $n^\prime \leq 1$ such that $\sigma(\gamma)\in F (p^{n^\prime}M)$ for all $\sigma\in\Gal(K(N)/K)$. Lemma \ref{Lemma6.2} implies that then also $\gamma\in K(p^{n^\prime}M)$. We will treat this case together with the case $n \leq 1$ where we do not need a descent at all.\\

Since $h(\sigma(\gamma)) = h(\gamma)$ we can in both cases compute the height of $\gamma$ where $\gamma$ will be an element of $K(p^{n^\prime}M)$ where $n^\prime \leq 1$. We want to apply Lemma \ref{Lemma6.3}, so we will prove that $\gamma \neq 0$ and not a root of unity. If so, we would have $4^{[F:\Q_q]} h(\alpha) = h(\alpha^{4^{[F:\Q_q]}}) = h(\gamma \alpha^{4^{[F:\Q_q]}}) = h(\sigma_F (\alpha)) = h(\alpha)$ by the properties of the height and hence $h(\alpha) = 0$. By Kronecker's Theorem this either means $\alpha =0$ or $\alpha\in\mu_\infty$. But this is a contradiction to our assumption on $\alpha$. Hence Lemma \ref{Lemma6.3} gives 
\begin{align*}
h(\gamma) \geq  \left(\frac{\log p}{\mathcal{E}(1+q^\mathcal{E})(1+  \frac1{2^{10}})}\right)^4.
\end{align*}
Moreover, we can use inequality \eqref{Equation6.13} and $\mathcal{E} \leq \frac{p^4}{2}$ to get
\begin{align*}
h(\alpha) & \geq \frac1{4^{[F:\Q_q]}+1}  \left(\frac{\log p}{\mathcal{E}(1+q^\mathcal{E})(1+  \frac1{2^{10}})}\right)^4\\
& \geq \frac{(\log p)^4}{(4^{[F:\Q_q]}\mathcal{E}q^\mathcal{E}2)^4}\\
& \geq \frac{(\log p)^4}{(4^{\frac{p^4}{2}} \frac{p^4}{2}p^{p^4}2)^4}\\
& \geq \frac{(\log p)^4}{(4^{\frac{p^4}{2}} p^{p^4+4})^4} \\
& \geq \frac{(\log p)^4}{4^{2p^4} p^{4p^4+16}} \\
& \geq \frac{(\log p)^4}{p^{\frac{\log 4}{\log p}2p^4 + 4p^4+16}} \\
& \geq \frac{(\log p)^4}{p^{5p^4}}.
\end{align*}
Now we have to put the tamely (inequality above) and totally ramified  (inequality \eqref{ramifiedHeight}) case into one bound.\\

We get $h(\alpha) \geq \max\left(\frac{(\log p)^4}{(4^{[F:\Q_q]}+1)\cdot 2\cdot 10^3 p^{32}}, \frac{(\log p)^4}{p^{5p^4}}\right) = \frac{(\log p)^4}{p^{5p^4}}$. 
\end{proof}

\bibliography{Bibliographie}
\bibliographystyle{alpha}
\parindent0pt
\end{document}